\pgfplotsset{compat=1.15}
\numberwithin{equation}{section}
\def\@tocline#1#2#3#4#5#6#7{\relax
  \ifnum #1>\c@tocdepth 
  \else
    \par \addpenalty\@secpenalty\addvspace{#2}%
    \begingroup \hyphenpenalty\@M
    \@ifempty{#4}{%
      \@tempdima\csname r@tocindent\number#1\endcsname\relax
    }{%
      \@tempdima#4\relax
    }%
    \parindent\z@ \leftskip#3\relax \advance\leftskip\@tempdima\relax
    \rightskip\@pnumwidth plus4em \parfillskip-\@pnumwidth
    #5\leavevmode\hskip-\@tempdima
      \ifcase #1
       \or\or \hskip 1em \or \hskip 2em \else \hskip 3em \fi%
      #6\nobreak\relax
    \hfill\hbox to\@pnumwidth{\@tocpagenum{#7}}\par
    \nobreak
    \endgroup
  \fi}
\title[]{Sobolev and H\"older estimates for the $\overline \partial$  equation on pseudoconvex domains of finite type in $\mathbb C^2$}            
\author[]{Ziming Shi}
\address{Department of Mathematics,
	University of California-Irvine, Irvine, CA, 92697} 
\email{zimings3@uci.edu}
\keywords{$\db$ equation, pseudoconvex, finite type, homotopy formula}   
\subjclass[2020]{32A26, 32T25, 32W05}   
\newcommand{\dist}{\operatorname{dist}}
\newcommand{\supp}{\operatorname{supp}}
\newcommand{\Ker}{\operatorname{Ker}}
\newcommand{\lip}{\operatorname{Lip}}
\newcommand{\loc}{\mathrm{loc}}
\newtheorem{thm}{Theorem}[section]
\newtheorem{cor}[thm]{Corollary} 
\newtheorem{prop}[thm]{Proposition}
\newtheorem{lemma}[thm]{Lemma}
\theoremstyle{definition}
\newtheorem{defn}[thm]{Definition}
\newtheorem{exmp}[thm]{Example}
\newtheorem{ques}[thm]{Question}
\theoremstyle{remark}
\newtheorem{rem}[thm]{Remark}
\newtheorem*{clm}{Claim}
\newtheorem*{ack}{Acknowledgment}
\renewcommand{\th}[1]{\begin{thm}\label{#1}}
	\renewcommand{\eth}{\end{thm}}
\newcommand{\co}[1]{\begin{cor}\label{#1}}
	\newcommand{\eco}{\end{cor}}
\newcommand{\pr}[1]{\begin{prop}\label{#1}}
	\newcommand{\epr}{\end{prop}}
\newcommand{\df}[1]{\begin{defn}\label{#1}}
	\newcommand{\edf}{\end{defn}}
\newcommand{\ex}[1]{\begin{exmp}\label{#1}} 
	\newcommand{\eex}{\end{exmp}}
\newcommand{\qu}[1]{\begin{ques}\label{#1}}
	\newcommand{\equ}{\end{ques}}  
\newcommand{\mk}{\begin{rem}}
	\newcommand{\emk}{\end{rem}}
\newcommand{\cl}{\begin{clm}}
	\newcommand{\ecl}{\end{clm}} 
\newcommand{\ac}{\begin{ack}}
	\newcommand{\eac}{\end{ack}} 
\newcommand{\ga}{\begin{gather}}
\newcommand{\ega}{\end{gather}}
\newcommand{\gan}{\begin{gather*}}
\newcommand{\egan}{\end{gather*}}
\newcommand{\al}{\begin{gngn}}
	\newcommand{\eal}{\end{align}}
\newcommand{\aln}{\begin{align*}}
\newcommand{\ealn}{\end{align*}}
\newcommand{\eq}[1]{\begin{equation}\label{#1}}
\newcommand{\eeq}{\end{equation}}
\newcommand{\pa}{\partial{}}
\newcommand{\na}{\nabla}
\newcommand{\db}{\dbar}
\newcommand{\we}{\wedge}
\newcommand{\ra}{\longrightarrow}
\newcommand{\bl}{\bigl(} 
\newcommand{\br}{\bigr)}  
\newcommand{\Bl}{\Bigl(} 
\newcommand{\Br}{\Bigr)}  
\newcommand{\blb}{\bigl[} 
\newcommand{\brb}{\bigr]}   
\newcommand{\Blb}{\Bigl[}
\newcommand{\Brb}{\Bigr]} 
\newcommand{\blc}{\bigl\{ } 
\newcommand{\brc}{\bigr\} }
\newcommand{\bn}{\big\|}   
\newcommand{\Bn}{\Big\|}  
\newcommand{\sm}{\setminus}
\newcommand{\pp}[2]{\frac{\partial #1}{\partial #2}}
\newcommand{\Z}{\mathbb{Z}}
\newcommand{\R}{\mathbb{R}} 
\newcommand{\C}{\mathbb{C}}
\newcommand{\N}{\mathbb{N}}
\newcommand{\V}{\mathcal{V}}
\newcommand{\U}{\mathcal{U}} 
\newcommand{\mc}{\mathcal}
\newcommand{\1}{\mathbf{1}}
\newcommand{\ov}{\overline}
\newcommand{\ti}{\tilde}
\newcommand{\wti}{\widetilde}
\newcommand{\hht}{\widehat}
\newcommand{\mr}{\mathring}
\newcommand{\RE}{\operatorname{Re}}
\newcommand{\IM}{\operatorname{Im}}
\newcommand{\dbar}{\overline\partial}
\newcommand{\all}{\alpha}
\newcommand{\del}{\delta}
\newcommand{\Del}{\Delta}
\newcommand{\var}{\varphi}
\newcommand{\e}{\epsilon}
\newcommand{\ve}{\varepsilon}
\newcommand{\om}{\omega}
\newcommand{\Om}{\Omega}
\newcommand{\thh}{\theta}
\newcommand{\La}{\Lambda}
\newcommand{\la}{\lambda}
\newcommand{\gm}{\gamma}
\newcommand{\Gm}{\Gamma}
\newcommand{\si}{\sigma}
\newcommand{\yh}{\frac{1}{2}}
\newcommand{\yt}{\frac{1}{3}}
\newcommand{\re}[1]{(\ref{#1})}
\newcommand{\rl}[1]{Lemma~\ref{#1}}
\newcommand{\rc}[1]{Corollary~\ref{#1}}
\newcommand{\rp}[1]{Proposition~\ref{#1}}
\newcommand{\rt}[1]{Theorem~\ref{#1}}
\newcommand{\rd}[1]{Definition~\ref{#1}}
\newcounter{pp}
\newcommand{\bpp}{\begin{list}{$\hspace{-1em}\alph{pp})$}{\usecounter{pp}}}
	\newcommand{\epp}{\end{list}}
\newcounter{ppp}
\newcommand{\bppp}{\begin{list}{$\hspace{-1em}(\roman{ppp})$}{\usecounter{ppp}}}
	\newcommand{\eppp}{\end{list}}
\newcommand{\Cc}{\mathcal{C}}
\newcommand{\Bs}{\mathscr{B}}
\newcommand{\Cf}{\mathfrak{C}}
\newcommand{\Ec}{\mathcal{E}}
\newcommand{\Fs}{\mathscr{F}}
\newcommand{\Gc}{\mathcal{G}}
\newcommand{\Hc}{\mathcal{H}}
\newcommand{\Kb}{\mathbb{K}}
\newcommand{\Kc}{\mathcal{K}}
\newcommand{\Nc}{\mathcal{N}} 
\newcommand{\Oc}{\mathcal{O}}
\newcommand{\Ss}{\mathscr{S}}
\newcommand{\Uc}{\mathcal{U}}
\begin{document} 
	\definecolor{rvwvcq}{rgb}{0.08235294117647059,0.396078431372549,0.7529411764705882}
\maketitle  

\begin{abstract} 
   We prove a homotopy formula which yields almost sharp estimates in all (positive-indexed) Sobolev and H\"older-Zygmund spaces for the $\overline \partial$ equation on pseudoconvex domains of finite type in $\mathbb C^2$, extending the earlier results of Fefferman-Kohn (1988), Range (1990), and Chang-Nagel-Stein (1992). 
The main novelty of our proof is the construction of holomorphic support functions that admit precise estimates when the parameter variable lies in a thin shell outside the domain. 
\end{abstract} 
\vspace{2cm}

\tableofcontents 

\section{Introduction} 
The goal of the present paper is the following:  
\begin{thm} \label{Thm::main} 
   Let $D \subset \C^2$ be a ($C^\infty$-)smoothly bounded pseudoconvex domain of finite type $m$. For each $\eta>0$, there exist linear operators $\Hc^\eta_i$, $i=1,2$ such that 
   \begin{enumerate}[(i)] 
    \item 
    $\Hc^\eta_i: H^{s,p}_{(0,i)} (D) \to H^{s+\frac{1}{m}-\eta,p}_{(0,i-1)}(D)$, for any $1<p<\infty$ and $s>\frac1p$. 
    \item 
   $\Hc^\eta_i: \La^s_{(0,i)}(D) \to \La^{s+\frac{1}{m}-\eta}_{(0,i-1)}(D)$ for any $s>0$. 
   \item 
  Suppose $\var \in H^{s,p}_{(0,1)}(D)$ and $\db \var \in H^{s,p}_{(0,2)}(D)$ (resp. $\var \in \La^s_{(0,1)}(\ov D)$ and $\db \var \in \La^s_{(0,2)}(\ov D)$), for $s,p$ given as above. Then 
  \[  
  \var = \db \Hc_1^\eta \var + \Hc_2^\eta \db \var
  \] 
in the sense of distributions. In particular $\Hc_1^\eta \var$ is a solution to the equation $\db u = \var$ for any $\var \in H^{s,p}_{(0,1)}( D)$ (or $\La^s(D) )$ with $\db \var = 0$. 
\end{enumerate} 
Here $H^{s,p}(D)$ is the fractional Sobolev space (see \rd{Def::Sobolev_Dom}), and $\La^s(D) $ is the H\"older-Zygmund space (see \rd{Def::H-Z}). $H^{s,p}_{(0,i)}(D)$ ($i=1,2$) denotes the space of $(0,i)$ forms with $H^{s,p}(D)$ coefficients, and similarly for $\La^s_{(0,i)}(D)$. 
\end{thm}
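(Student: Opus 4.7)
The plan is to construct the operators $\Hc_i^\eta$ explicitly as integral operators built from a Cauchy–Fantappi\`e–Leray (CFL) type kernel, and then read off the Sobolev and H\"older–Zygmund estimates from pointwise size/cancellation properties of these kernels in the nonisotropic geometry associated with finite type domains in $\C^2$. The global strategy is classical Henkin/Range/Kerzman–Stein: produce a holomorphic support function $S(z,\zeta)$ adapted to $\pa D$, form a CFL kernel $\Omega(z,\zeta)$ whose numerator involves $S$, patch it against a Bochner–Martinelli interior kernel via a Koppelman homotopy to get a kernel valid for all $\zeta$ in a neighborhood of $\ov D$, and set $\Hc_i^\eta\var$ equal to the corresponding integral. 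The Koppelman identity then gives $\var = \db \Hc_1^\eta \var + \Hc_2^\eta \db\var$ on the nose.

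The main step is the support function. Following the tradition of Fornæss, Range, and McNeal, I would construct $S(z,\zeta)$ locally near each boundary point using the finite-type bumping polynomials and an adapted plurisubharmonic defining function, so that $|S(z,\zeta)| \gtrsim |\rho(\zeta)| + d(z,\zeta)^{m}$ where $d$ is the McNeal pseudodistance and $\rho$ is a defining function. The new feature needed here is to carry out the construction so that these lower bounds persist not only for $\zeta \in \ov D$ but for $\zeta$ lying in a thin one-sided shell $\{0 < \rho(\zeta) < \delta_0\}$ outside $\ov D$. To make use of this, the input form $\var$ is extended to a slightly larger domain $D' \supset D$ via a Sobolev/H\"older extension (this is why the hypothesis $s>1/p$ appears, so that the trace on $\pa D$ and hence the extension are well defined), and the integral operators are then realized over $\pa D'$, whose boundary sits inside the thin shell.

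Once the kernel is in hand, the estimates follow from a by now standard scheme. Write $\Omega(z,\zeta)$ in McNeal polydisc coordinates at a nearby boundary point; the volume of the nonisotropic ball $B(\zeta,r)$ is comparable to $r^{2+2/m}$ in the complex tangential direction versus $r^2$ in the normal direction, and this extra factor $r^{2/m}$ is precisely what yields the $\tfrac{1}{m}$ gain in regularity. For fractional Sobolev spaces $H^{s,p}$, I would decompose $\Hc_i^\eta$ dyadically in the distance $|\rho(\zeta)|$, apply a Hardy/commutator estimate to bring the derivatives onto the kernel, and use $L^p$-boundedness of nonisotropic singular integrals $\grave a$ la Nagel–Rosay–Stein–Wainger; for $\La^s$, one uses Zygmund-type mean-oscillation estimates against translates. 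The small loss $\eta>0$ is to absorb the logarithmic and endpoint losses coming from the dyadic summation and the sharp function characterization near integer values of $s$.

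The hard part, and the reason the theorem is delicate rather than routine, is the shell-extended support function of step~1: the usual Henkin-type construction guarantees nonvanishing only for $\zeta \in \ov D$, and all available local patching constructions for finite-type domains tend to degenerate at $\pa D$, exactly where the bound must now be maintained on both sides. I expect the bulk of the paper to be devoted to building $S$ with quantitatively uniform estimates across $\pa D$, tracking how the bumping polynomials and the adapted polydisc coordinates deform under a small perturbation of the defining function, and showing that the derived CFL kernel satisfies the same size/cancellation bounds on $D'$ as the classical one does on $D$. Once that is available, the homotopy identity and the mapping estimates become essentially mechanical applications of the finite-type analysis of Chang–Nagel–Stein.
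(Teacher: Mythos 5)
The central step of your plan --- a Henkin/Range/McNeal-type holomorphic support function $S(z,\zeta)$ with the pointwise lower bound $|S(z,\zeta)|\gtrsim|\rho(\zeta)|+d(z,\zeta)^m$ on a \emph{general} pseudoconvex finite type domain in $\C^2$ --- is exactly what the Kohn--Nirenberg example rules out. Such support functions with pointwise nonisotropic lower bounds exist on convex (or lineally convex) finite type domains and on special classes such as Forn\ae ss's, but for a general finite type pseudoconvex boundary in $\C^2$ there is no holomorphic support function vanishing only at $z=\zeta\in bD$, so the step you defer to ``the bulk of the paper'' cannot be carried out as described. The paper's route is designed precisely to avoid this obstruction: for each parameter point $q$ in a thin shell outside $\ov D$ it builds a Catlin-type pseudoconvex bumped domain $D_\ast(q)$ with $D\Subset D_\ast(q)$ and $q\notin D_\ast(q)$, applies Skoda's $L^2$ division theorem on $D_\ast(q)$ to obtain Leray maps $h_i(q,\cdot)$ with $h_1(q,z)(z_1-q_1)+h_2(q,z)(z_2-q_2)=1$ (so the ``support function'' is identically $1$) and only a weighted $L^2$ bound, and then recovers pointwise bounds on $h_i$ and all $z$-derivatives via Cauchy estimates on nonisotropic polydisks contained in $D_\ast(q)$, followed by Range's trick to restore smooth dependence on the parameter. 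This is also why the loss $\eta>0$ occurs: it comes from the integrability exponent in Skoda's theorem, not, as you suggest, from dyadic summation or endpoint effects near integer $s$, which one would otherwise expect to remove by interpolation.

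A second, smaller divergence: you extend $\var$ to a larger domain $D'\supset D$ via a trace-based extension (and invoke $s>1/p$ for that), putting the kernel on $\pa D'$. The paper instead keeps volume-integral homotopy operators of Gong/Shi--Yao type: $\var$ is extended by the Rychkov universal extension operator $\Ec$ (no trace hypothesis needed), the CFL-type kernel built from the Leray maps is integrated only over $\Uc\sm\ov D$ against the commutator $[\db,\Ec]\var$, whose gain of one derivative's worth of boundary decay (\rp{Prop::comm_est}) substitutes for integration by parts, and the estimates are closed with a Hardy--Littlewood lemma on interior approximating domains $D'_\e\Subset D$ before letting $\e\to0$. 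In that scheme the restriction $s>1/p$ arises from the admissible range of weights in the weighted kernel estimates, not from trace theory. Your Koppelman identity and nonisotropic-volume heuristics are consistent with the paper's estimates for $\Gm_\e$, but without replacing the nonexistent support function by the Skoda-plus-bumping construction the proposal does not yield the theorem.
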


The study of global existence and regularity of the $\dbar$-equation on pseudoconvex domains is a fundamental problem in several complex variables. Since the early 1960s, two parallel schools of research have developed to solve the $\db$ equation. The first one is by solving the $\db$-Neumann problem. On any pseudoconvex domain of $C^\infty$ boundary, one can define the $L^2$ canonical solution $\db^\ast \Nc \var$, where $\Nc$ is the operator that solves the $\db$-Neumann boundary value problem. The solution $\db^\ast \Nc \var$ is called the canonical solution since it is the unique solution which is orthogonal to $\Ker(\db)$ under the $L^2$ inner product. The first global regularity result was obtained by Kohn \cite{Koh64}, who showed that on a strongly pseudoconvex domain with $C^\infty$ boundary, $\db^\ast \Nc$ is a bounded operator from $H^{s,2}(\Om)$ to $H^{s+\yh,2}(\Om)$ for $s \geq 0$. Later on in their monograph \cite{G-S77}, Greiner and Stein proves the boundedness $\db^\ast \Nc: H^{s,p}_{(0,1)}(\Om) \to H^{s+\yh,p}(\Om)$ for $s \geq 0$, and $\db^\ast \Nc: \La^r_{(0,1)}(\Om) \to \La^{r+\yh}(\Om)$ for $r>0$. Chang \cite{Cha89} extends the 
(Sobolev space) result of Greiner-Stein to any $(p,q)$ forms.


The second approach for solving the $\db$ equation -- the one which we will follow in this paper -- is to look for solution operators in the form of integral formula. This can be viewed as the higher dimensional generalizations of the Cauchy-Green operator which solves the $\db$ equation in $\C$. The method of integral formula has several advantages. It is more geometric in nature; requires less boundary regularity; and gives $L^\infty$ estimate that is not accessible by the $\db$-Neumann method. The theory was 
pioneered by Grauert and Lieb, and independently, by Henkin, and has been developed extensively on many classes of pseudoconvex domains. We mention a few notable works in this direction. 

On a strictly pseudoconvex domain with $C^2$ boundary, Henkin and Ramirez \cite{H-R71} constructed a solution operator that is bounded from $C^0(\ov \Om)$ to $C^{\yh}(\ov \Om)$. Assuming the boundary is $C^{k+2}$ for positive integers $k$, Siu \cite{Siu74} and Lieb-Range \cite{L-R80} constructed solution operators that is bounded from  $C^{k}(\ov \Om)$ to $C^{k+\yh}(\ov \Om)$. More recently, Gong \cite{Gong19} showed that under the minimal smoothness of $C^2$ boundary, there exists a solution operator that is bounded from $\La^r(\ov \Om)$ to $\La^{r+\yh}(\ov \Om)$, for any $r >1$. In \cite{S-Y24_1}, the authors constructed a solution operator which is bounded from $H^{s,p}(\ov \Om) \to H^{s+\yh,p}(\ov \Om)$ for any $s \in \R$ and $1<p<\infty$, under the assumption that $b\Om \in C^\infty$. For convex domains of finite type, sharp 
estimates have been obtained by \cite{D-F-F99}, \cite{Ale06} and \cite{Yao24}. 

Pseudoconvex domains of finite type in $\C^2$ was introduced by Kohn \cite{Koh72}, as a natural generalization of strict pseudoconvexity. Kohn showed that if the domain has type $m$ (which must be even due to pseudoconvexity), then the $\db$-Neumann problem is subelliptic of order $1/m$, from which the boundedness of the operator $\db^\ast \Nc: H^{s,2} \to H^{s+\frac{1}{m},2}$ follows. Using microlocal analysis, Fefferman and Kohn \cite{F-K88} proved the boundedness $\db^\ast \Nc: C^s(\ov \Om)  \to C^{s+1/m}(\ov \Om) $ for any $s>0$ such that $s+1/m$ is not an integer, and they also proved the sup norm estimate $\db^\ast \Nc: L^\infty(\Om) \to L^\infty(\Om)$. Soon after, Chang-Nagel-Stein \cite{C-N-S92} used method similar to the ones in \cite{G-S77} to show that $\db^\ast \Nc$ is bounded from $\La^s(\ov \Om)$ to $\La^{s+\frac{1}{m}}(\ov \Om)$ for any $s>0$. They also proved for any smooth complex tangential vector fields $L_1$ the boundedness of the operator $L_1 \db^\ast \Nc, \ov{L_1} \db^\ast \Nc: W^{k,p}(\Om) \to W^{k,p}(\Om)$ for any $1<p<\infty$ and non-negative integer $k$. 

The method of integral formula also plays an important role in the study of $\db$ equation on 
pseudoconvex domains of finite type in $\C^2$. As in the case of strict pseudoconvexity, the success of this method depends largely on solving a holomorphic division problem 
\[
  h_1(z,\zeta) (z_1 - \zeta_1) + h_2(z,\zeta) (z_2 - \zeta_2) = \Phi(z,\zeta). 
\]
Here the functions $h_i$ are called \emph{Leray maps} and has the important property that they are holomorphic in $z \in \Om$. The function $\Phi$ -- called the \emph{holomorphic support function} -- is non-vanishing for $z \in \Om$ and $\zeta \in b\Om$. Kohn and Nirenberg \cite{K-N73} constructed an example -- a finite type pseudoconvex domain on which such function $\Phi$ does not exist while vanishing at $z = \zeta \in b\Om$. Thus in general one cannot expect to obtain $h_1,h_2$ that are bounded uniformly for $(z,\zeta) \in (\Om,b\Om)$. Nevertheless, on a class of domains in $\C^2$ which include the Kohn-Nirenberg example, Fornaess \cite{For86} proved the existence of $h_1,h_2$ with $\Phi \equiv 1$ and $h_i$, $i=1,2$ satisfying a certain weighted integral estimate, which then allows him to prove the sup-norm estimate for the $\db$ equation on such domains. Belanger \cite{Bel93} then refined Fornaess' method and obtained some (far from optimal) H\"older estimates for $\db$ on the same class of domains. 

In \cite{Ran90}, Range constructed for each $\eta >0$ an integral solution operator $T^\eta$ that is bounded from $L^\infty(\ov \Om)$ to $C^{\frac{1}{m}-\eta}(\ov \Om)$, on any pseudoconvex domain of finite type $\Om$ in $\C^2$. The estimate is not sharp as in the work of \cite{F-K88} and \cite{C-N-S92}, but Range's proof is much simpler and geometric; it is based on the earlier work of Catlin \cite{Cat89}, which shows that one can locally bump out the domain near each boundary point to a larger pseudoconvex domain. To construct the Leray maps $h_i$, Range uses Skoda's $L^2$ division theorem and obtained the following weighted $L^2$ estimate on the pushed out domain $D_p$ (for fixed $p \in bD$):    
\[
  \int_{D_p} \frac{|h^\eta_i(z,p)|^2}{|z-p|^2} \dist(z,D_p)^{2 \eta} \, dV(z) < C_{D,\eta}, \quad \eta >0. 
\]  
The domain $D_p$ is pseudoconvex and touches $b D$ only at $p$. 
In Range's proof, the functions $h_i(z,p)$ is defined only for $p \in b D$.  
For our proof of \rt{Thm::main}, we construct the functions $h_i(z,q)$ which are defined for $q$ in a thin shell outside the domain, i.e. for $q \in \Uc \sm D$ for some small neighborhood $\Uc$ of $\ov D$. 
More specifically, we show that for each $q \in \Uc \sm D$, there is a pseudoconvex domain $D_\ast(q)$ with $D \Subset D_\ast(q)$, $q \notin D_\ast(q)$, and whose boundary is as far away from $bD$ as possible, as measured by certain non-isotropic polydisks with centers in $D$. The polydisks we use are from \cite{Cat89} with some slight modification. Roughly speaking, the size of these polydisks are determined by the distance of its center and that of $q$ to $bD$. Once we have the $h_i(z,\zeta)$ which are holomorphic in $z \in D_\ast(q)$, we can apply Cauchy integral formula to estimate all the $z$ derivatives of the functions $h_i(z,q)$ for $z \in D$ and $q \in \Uc \sm D$. 

There is an additional problem of how $h_i(\cdot,q)$ depends on $q$. This problem arises since we need to integrate $h$ in the second variable in our solution operator. Here as in Range, we replace $h(z,q)$ with a smooth function $h(z,\zeta)$, by restricting $z$ in an interior domain $D'_\e \Subset D$, $D'_\e$ are all relatively compact in $D$ and approximate $D$ from inside as $\ve \to 0$. We can then  modify the function $h_i(z,\zeta)$ to a smooth function in $\zeta$ by restricting $\zeta$ to a small ball centered at $q$ and whose radius shrinks to $0$ as $\e$. In our case we need to control this radius, which can be estimated using the sup norm of $h_i(\cdot,q)$ on $\ov D_\e$. We show that the radius is small compared to $r(q) + \e$. This allows us to estimate the error arising from the switching of $q$ to $\zeta$, and in the end we obtain a modified $h_i(z,\zeta)$ which is holomorphic in $z \in D'_\e$ and smooth in $\zeta$ in $\Uc \sm D$. 

We can now solve the $\db$ equation on each approximating domain $D'_\e$, using the homotopy operator constructed in \cite{Gong19} and \cite{S-Y25}. The resulting estimate does not depend on $\e$ as $\e \to 0$, so we obtain a solution on the original domain $D$ by taking limits in suitable function spaces. We mention two auxiliary results which play crucial roles in our proof: the Hardy-Littlewood lemma for Sobolev and H\"older-Zygmund spaces (\rp{Prop::H-L}) which reduces the problem to estimating a weighted integral norm; and the commutator estimate (\rp{Prop::comm_est}) proved in \cite{S-Y24_1}, which allows us to harness the decay properties of the commutator term in the integral operator and essentially dispense with the need for integration by parts. 

As in Range \cite{Ran90}, the arbitrary small loss of regularity comes from the integrability condition in Skoda's theorem, and it seems that the current method is inadequate to remove this loss. 
The following question remains open. 
\begin{ques}
   Let $D \subset \C^2$ be a smoothly bounded pseudoconvex domain of finite type $m$. Does there exist an integral solution operator to the $\db$ equation that is bounded from $H^{s,p}(D)$ to $H^{s+\frac{1}{m},p}(D)$ for any $1<p<\infty$ and $s> \frac{1}{p}$, and $\La^s(D) $ to $\La^{s+\frac{1}{m}}(\ov D)$ for any $s>0$?  
\end{ques}
We expect that the method in this paper can be adapted to other pseudoconvex domains of finite type in $\C^n$. This will be left for future investigation.

We now fix some notations used in the paper. We write $x \lesssim y$ to mean that $x \leq Cy$ where $C$ is a constant independent of $x,y$, and we write $x \approx y$ if $x \lesssim y$ and $y \lesssim x$. We denote by $C^\infty_c(D)$ the space of $C^\infty$ functions with compact support in $D$. $\Oc(D)$ is the space of holomorphic functions on $D$. We use $D^l$ to denote a differential operator of order $l$: $D^l_z g(z)= \pa_{z_i}^{\all_i} \pa_{\ov z_j}^{\beta_j} g(z)$, $\sum_i \all_i + \sum_j \beta_j = l$. The volume element in $\C^n$ or $\R^N$ is denoted as $dV$.     
\vspace{5pt}     
\section{Function spaces}  
In this section, we recall some basic results for the Sobolev space $H^{s,p}(\Om)$ and the H\"older-Zygmund space $\La^s(\Om)$. All these results follow from standard theory, but their proofs are not readily found in the literature. We provide all the details for the reader's convenience. 

\begin{defn}[H\"older-Zygmund space on $\R^N$]  \label{Def::H-Z} 
The H\"older-Zygmund space on $\R^N$, denoted by $\La^s(\R^N)$ for $s\in\R^+$ is defined as follows
\begin{itemize}
    \item For $0<s<1$, $\La^s(\R^N)$ consists of all $f\in C^0(\R^N)$ such that 
    \[ 
    \|f\|_{\La^s(U)}:=\sup \limits_{\R^N} |f|+\sup\limits_{x,y\in \R^N, \, x \neq y} \frac{|f(x)-f(y)|}{|x-y|^s}<\infty. 
    \] 
    \item $\La^1(\R^N)$ consists of all $f\in C^0(\R^N)$ such that 
    \[ 
    \|f\|_{\La^1(\R^N)}:=\sup\limits_{\R^N} |f|+\sup\limits_{x,y\in \R^N, \, x \neq y}\frac{|f(x)+f(y)-2f(\frac{x+y}2)|}{|x-y|}<\infty. 
    \]
    \item For $s>1$ recursively, $\La^s(\R^N)$ consists of all $f\in \La^{s-1}(\R^N)$ such that $\nabla f\in\La^{s-1}(\R^N)$. We define $\|f\|_{\La^s(\R^N)}:=\|f\|_{\La^{s-1}(\R^N)}+\sum_{j=1}^d \|D_j f\|_{\La^{s-1}(\R^N)}$.
    \item We define $C^\infty(\R^N):=\bigcap_{s>0} \La^s(\R^N)$ to be the space of bounded smooth functions.
\end{itemize}
\end{defn} 
\begin{defn}[H\"older-Zygmund space on domains]   
Let $\Om \subset \R^N$ be a bounded Lipschitz domain. The H\"older-Zygmund space on $\Om$, denoted by $\La^s(\Om)$ for $s>0$,  is defined as 
$ \La^s (\Om) = \{ f: \exists \: \wti f \in \La^s (\R^N) \;  s.t. \; \wti f|_\Om = f \}$ equipped with the norm: 
\[
  |f|_{\La^s(U)}:= \inf_{\wti f\in \La^s (\R^N), \: \wti f|_\Om =f} | \wti f|_{\La^s (\R^N)}. 
\] 
\end{defn} 
\begin{rem}
  There is an intrinsic equivalent definition for the space $\La^s(\Om)$, namely, one which requires only that $f$ is defined in $\Om$, rather than assuming $f$ is the restriction of a function defined on the whole space. We will not use this definition in this paper. The interested reader can refer to \cite[Def. 1.120 and Thm 1.122]{Tri06} or \cite[Section 5]{Gong25}.     
\end{rem}                                                                       
Next we turn to the Sobolev spaces. 
We denote by $\Ss(\R^N)$ the space of Schwartz functions, and by $\Ss'(\R^N)$ the space of tempered distributions. For $g \in \Ss(\R^N)$, we set the Fourier transform $\hht g(\xi)=\int_{\R^N} g(x)e^{-2\pi i x \cdot \xi}dx$, and the definition extends naturally to tempered distributions. 
	
\begin{defn}
		We let $\dot\Ss(\R^N)$ denote the space\footnote{In some literature like \cite[Section 5.1.2]{Tri83}, the notation is $Z(\R^N)$.} of all infinite order moment vanishing Schwartz functions. That is, all $f\in\Ss(\R^N)$ such that $\int x^\alpha f(x)dx=0$ for all $\alpha\in\N^N$, or equivalently, all $f\in\Ss(\R^N)$ such that $\widehat f(\xi)=O(|\xi|^\infty)$ as $\xi\to0$. 
\end{defn}
	\begin{defn}[Sobolev space on $\R^N$] \label{Def::Sobolev_RN}
	Let $s\in\R$, $1<p<\infty$. We define $H^{s,p}(\R^N)$ to be the fractional Sobolev space consisting of all (complex-valued) tempered distribution $f\in\Ss'(\R^N)$ such that $(I-\Delta)^\frac s2f\in L^p(\R^N)$, and equipped with norm 
		\[ 
		\|f\|_{H^{s,p}(\R^N)} := \|(I-\Delta)^\frac s2f\|_{L^p(\R^N)}.
		\]
		Here $(I-\Delta)^\frac s2 $ is the Bessel potential operator given by 
		\begin{equation}\label{BesselPotent}
		    (I - \Del)^\frac s2 f=((1 + 4\pi^2|\xi|^2)^\frac s2 \hht f(\xi))^\vee.
		\end{equation} 	
\end{defn}
As in the case for the H\"older-Zygmund space, we define Sobolev space on domains as restrictions of the Sobolev space on $\R^N$. 
\begin{defn}[Sobolev space on domains] \label{Def::Sobolev_Dom} 
		Let $\Om \subset \R^N$ be an open set.
		\begin{enumerate}[(i)]
    		    \item Define $\Ss' (\Om): = \{\tilde f|_{\Om}:\tilde f\in \Ss' (\R^N) \}$.
    		    \item For $s \in \R$ and $1 < p < \infty$, define $H^{s,p}(\Om): = \{\tilde f|_{\Om}:\tilde f\in H^{s,p} (\R^N)\}$ with norm
		\[
		\| f \|_{H^{s,p}(\Om)} := \inf_{\wti{f}|_{\Om} = f} \|\tilde f\|_{H^{s,p} (\R^N)}.  
		\] 
		\item For $s \in \R$ and $1 < p < \infty$, define $H^{s,p}_0 (\Om)$ to be the subspace of $H^{s,p} (\R^N) $ which is the completion of $C^{\infty}_0 (\Om)$ under the norm $\| \cdot \|_{H^{s,p} (\R^N)}$. 
		    
		\end{enumerate}
\end{defn} 
\begin{rem}
    When $s=k$ is a non-negative integer, the space $H^{s,p}(\R^N)$ is the same as $W^{k,p}(\R^n)$, which consists of complex-valued functions whose derivatives up to order $k$ is in $L^p(\R^n)$.  
\end{rem}
For computation it is often convenient to use Littlewood-Paley characterizations of the above spaces. This leads to the Triebel-Lizorkin space and the Besov space, which generalize Sobolev space and H\"older-Zygmund space, respectively.  
decomposition.
\begin{defn}\label{Defn::Prem::DyaRes}
A \textit{classical dyadic resolution} is a sequence $\lambda=(\lambda_j)_{j=0}^\infty$ of Schwartz functions on $\R^n$, denoted by $\lambda\in\Cf$, such that the Fourier transforms $\widehat\lambda_j(\xi)=\int_{\R^n}\lambda_j(x)e^{-2\pi ix \cdot \xi} dx$ satisfies
\begin{itemize}
    \item $\widehat\lambda_0\in C_c^\infty\{|\xi|<2\}$, $\widehat\lambda_0|_{\{|\xi|<1\}}\equiv1$.
    \item $\widehat\lambda_j(\xi)=\widehat\lambda_0(2^{-j}\xi)-\widehat\lambda_0(2^{-(j-1)} \xi)$ for $j\ge1$ and $\xi\in\R^n$.
\end{itemize}
\end{defn} 
For any $f\in\Ss'(\R^n)$, we have the decomposition $f=\sum_{j=0}^\infty(\widehat\lambda_j\widehat f)^\vee=\sum_{j=0}^\infty\lambda_j\ast f$ and both sums converge as tempered distribution. We can define the Besov and Triebel-Lizorkin spaces using such $\lambda$.

A pair $(p,q)$ is said to be \textit{admissible} for the function class $\Bs_{pq}^s$ if $0<p,q\le\infty$, and admissible for the function class $\Fs_{pq}^s$ if $0<p<\infty$, $0<q\le\infty$ or $p=q=\infty$. 

\begin{defn}\label{Defn::Prem::BsFsDef}
Let $(\lambda_j)_{j=0}^\infty\in\Cf$, $s\in\R$ and let $(p,q)$ be admissible. We define the following norms: 
\begin{align*}
    \|f\|_{\Fs_{pq}^s(\lambda)}:=\|(2^{js}\lambda_j\ast f)_{j=0}^\infty\|_{L^p(\ell^q)}&=\left\|\big\|(2^{js}\lambda_j\ast f)_{j=0}^\infty\big\|_{\ell^q(\N)}\right\|_{L^p(\R^n)}.
\end{align*}
The \emph{Triebel-Lizorkin space $\Fs_{pq}^s(\R^n)$} is the set of all $f\in\Ss'(\R^n)$ such that $\|f\|_{\Fs_{pq}^s(\lambda)}<\infty$. One can show that the spaces do not depend on the choice of $\la \in \Cf$. Therefore we will henceforth use the norm $\|\cdot\|_{\Fs_{pq}^s(\R^n)}=\|\cdot\|_{\Fs_{pq}^s(\lambda)}$ for an (implicitly chosen) $\lambda\in\Cf$.

Let $\Omega\subseteq\R^n$ be an arbitrary open subset. We define $\Fs_{pq}^s(\Omega):=\{\tilde f|_\Omega:\tilde f\in\Fs_{pq}^s(\R^n)\}$ as subspaces of distributions in $\Omega$, with norms
\begin{equation} \label{F_space_ext_norm} 
   \|f\|_{\Fs_{pq}^s(\Omega)}:=\inf\{\|\tilde f\|_{\Fs_{pq}^s(\R^n)}:\tilde f|_\Omega=f\}.
\end{equation} 
\end{defn}
Notice that the above definition is not intrinsic to the domain since it requires the distribution to have an extension on the whole space. To define an intrinsic norm we need to impose certain conditions on the domain so it admits extension operators bounded on the function classes. We will use the Lipschitz boundary condition and Rychkov (universal) extension operator. 

We now define the notion of special and bounded Lipschitz domains. 
\begin{defn} \label{Def::Lip_dom}  
    A \emph{special Lipschitz domain} is an open set $\om \subset \R^N$ of the form $\omega=\{(x',x_N):x_N>\rho(x')\}$ with $\|\nabla\rho\|_{L^\infty}< L$. A \emph{bounded Lipschitz} domain is a bounded open set $\Om$ whose boundary is locally the graph of some Lipschitz function. In other words, $b \Om = \bigcup_{\nu=1}^M U_\nu$, where for each $1 \leq \nu \leq M $, there exists an invertible linear transformation $\Phi_\nu:\R^N \to \R^N$ and a special Lipschitz domain $\om_\nu$ such that 
    \[
      U_\nu \cap \Om = U_\nu \cap \Phi_\nu (\om_\nu). 
    \]
Fix any such covering $\{ U_\nu \}$, we define the \emph{Lipschitz norm of $\Om$ with respect to $U_\nu$, denoted as $\lip_{U_\nu}(\Om)$}, to be $\sup_{\nu} \| D \Phi_\nu\|_{C^0}$.  
\end{defn}

\begin{defn}\label{Defn::Space::ExtOp}
Let $\omega\subset\R^N$ be a \textit{special Lipschitz domain} of the form $\omega=\{(x',x_N): x_N >g(x')\}$ for some $g:\R^{N-1}\to\R$ such that $\|\nabla\rho\|_{L^\infty}<L$. 

The \textit{Rychkov's universal extension operator} $E=E_\omega$ for $\omega$ is given as follows: 
\begin{equation}\label{Eqn::Space::ExtOp}
\Ec_\omega f: =\sum_{j=0}^\infty\psi_j\ast(\1_{\omega}\cdot(\phi_j\ast f)),\qquad f\in\Ss'(\omega).
\end{equation}
Here $(\psi_j)_{j=0}^\infty$ and $(\phi_j)_{j=0}^\infty$ are families of Schwartz functions that satisfy the following properties: 
\begin{enumerate}[label=(\thesection.\arabic*)]\setcounter{enumi}{\value{equation}}
    \item\label{Item::Space::Scal} \textit{Scaling condition}: $\phi_j(x)=2^{(j-1)N}\phi_1(2^{j-1}x)$ and $\psi_j(x)=2^{(j-1)N}\psi_1(2^{j-1}x)$ for $j\ge2$.
	\item\label{Item::Space::Momt} \textit{Moment condition}: $\int\phi_0=\int\psi_0=1$, $\int x^\alpha\phi_0(x)dx=\int x^\alpha\psi_0(x)dx=0$ for all multi-indices $|\alpha|>0$, and $\int x^\alpha\phi_1(x)dx=\int x^\alpha\psi_1(x)dx=0$ for all $|\alpha|\ge0$.
	\item\label{Item::Space::Approx}\textit{Approximate identity}: $\sum_{j=0}^\infty\phi_j=\sum_{j=0}^\infty\psi_j\ast\phi_j=\delta_0$ is the Direc delta measure.
	\item\label{Item::Space::Supp} \textit{Support condition}: $\phi_j,\psi_j$ are all supported in the negative cone $-\Kb^L:=\{(x',x_N):x_N < -L |x'|\}$.\setcounter{equation}{\value{enumi}}
\end{enumerate}
We call $(\phi,\psi)$ a \emph{$\Kb^L$-Littlewood-Paley pair}. 
\end{defn} 
Note that condition \ref{Item::Space::Supp} implies that there exists some $c_0 >0$ such that $\supp \phi_0 \subset \{ x_N < - c_0' \}$. By the scaling condition \ref{Item::Space::Scal}, we have 
\begin{equation} \label{supp_phi_j} 
 \supp \phi_j \subset \{ x_N < - c_0 2^{-j} \}, \quad c_0 = 2 c_0'.      
\end{equation}

Since a bounded Lipschitz domain $\Om$ is locally a special Lipschitz domain, we can apply a partition of unity and patch together the extension operators for the special Lipschitz domains to obtain an Rychkov extension operator $\Ec_\Om$ for $\Om$. We omit the exact expression for $\Ec_\Om$ (see for example \cite[(3.3)]{S-Y25}) and we simply note that $\Ec_\Om$ have the same properties as $\Ec_\om$. 

Given a family $\phi=(\phi_j)_{j=0}^\infty$ satisfying properties \ref{Item::Space::Scal} - \ref{Item::Space::Supp}, we can define the following intrinsic norm: 
\[
  \| f \|_{\Fs^{s,in}_{pq}(\phi)} := 
  \|(2^{js}\lambda_j\ast f)_{j=0}^\infty\|_{L^p(\ell^q)}
  =\left\|\big\|(2^{js}\lambda_j\ast f)_{j=0}^\infty\big\|_{\ell^q(\N)}\right\|_{L^p(\R^n)}.  
\]
For a fixed $\phi$, we define $\Fs_{p,q}^{s,in}(\phi)$ as the space of distributions $f$ on $\om $ with finite norm $ \| f \|_{\Fs^{s,in}_{pq}(\phi)}$. By using partition of unity, we can similarly define the space $\Fs_{p,q}^{s,in}(\Om)$ for any bounded Lipschitz domain $\Om$. 

Rychkov in the paper \cite{Ryc99} proves the following remarkable extension theorem. 
\begin{prop}[Rychkov] \label{Prop::Rychkov} 
   Let $\om$ be a special Lipschitz domain. For any $s \in \R$, the operator $\Ec_\om$ is $\Fs^{s,in}_{pq} (\om) \to \Fs^s_{pq} (\R^N)$ bounded, provided that either $p<\infty$ or $p=q=\infty$. Using partition of unity, same properties hold for $\Ec_\Om$ for any bounded Lipschitz domain $\Om$. 
\end{prop}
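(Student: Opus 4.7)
My plan is to first verify the reproducing identity $\Ec_\omega f|_\omega = f$, which shows that $\Ec_\omega$ is genuinely an extension operator, and then to establish the norm bound by picking an arbitrary outer dyadic resolution $(\lambda_k)_{k=0}^\infty \in \Cf$ and controlling $\lambda_k \ast \Ec_\omega f$ via an almost-diagonal estimate between the families $(\lambda_k)$ and $(\phi_j,\psi_j)$.

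The reproducing identity will follow from a simple geometric observation based on the cone support \ref{Item::Space::Supp} and the Lipschitz bound $\|\nabla\rho\|_{L^\infty} < L$: if $x \in \omega$ and $y - x \in \Kb^L$, then $y_N > x_N + L|y'-x'| > \rho(x') + L|y'-x'| \geq \rho(y')$, so $y \in \omega$. Consequently both $(\phi_j \ast f)(x)$ and $(\psi_j \ast (\1_\omega g))(x)$ for $x \in \omega$ only involve values of $f$ (resp.\ $g$) inside $\omega$. Combining this with the approximate identity $\sum_j \psi_j \ast \phi_j = \delta_0$ from \ref{Item::Space::Approx} yields $\Ec_\omega f(x) = f(x)$ for $x \in \omega$.

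For the boundedness I will write
\[
\lambda_k \ast \Ec_\omega f = \sum_{j=0}^\infty (\lambda_k \ast \psi_j) \ast \bigl(\1_\omega \cdot (\phi_j \ast f)\bigr)
\]
and establish the almost-diagonal kernel estimate
\[
|\lambda_k \ast \psi_j(x)| \lesssim_M 2^{-M|k-j|}\, 2^{N\min(j,k)}\bigl(1 + 2^{\min(j,k)}|x|\bigr)^{-M}
\]
for arbitrarily large $M$ by an almost-orthogonality computation based on the Fourier localization of $\lambda_k$ near $|\xi| \sim 2^k$, the scaling \ref{Item::Space::Scal}, and the moment condition \ref{Item::Space::Momt} (which forces $\widehat{\psi_j}(\xi) = O(|\xi|^M)$ near the origin to all orders). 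Pointwise domination of $\1_\omega \cdot(\phi_j \ast f)$ by $|\phi_j \ast f|$ and convolution with the above kernel will give
\[
2^{ks}\,|\lambda_k \ast \Ec_\omega f(x)| \lesssim \sum_{j=0}^\infty 2^{-(M-|s|)|k-j|}\, 2^{js}\, \mathcal{M}(\phi_j \ast f)(x),
\]
where $\mathcal{M}$ is a Hardy--Littlewood or Peetre-type maximal operator introduced to absorb the polynomial tails of the kernel.

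Taking $M > |s|$, applying a discrete Young's inequality in $k$ against the $\ell^q$ norm, then the $L^p$ norm in $x$, and finally the Fefferman--Stein vector-valued maximal inequality (valid for $1 < p < \infty$ and $1 < q \leq \infty$; for $q \leq 1$ I would replace $\mathcal{M}$ by $\mathcal{M}(|\cdot|^r)^{1/r}$ with $r < \min(p,q)$), I will conclude $\|\Ec_\omega f\|_{\Fs^s_{pq}(\R^N)} \lesssim \|f\|_{\Fs^{s,in}_{pq}(\phi)}$. The endpoint $p = q = \infty$ is handled by direct pointwise bounds, bypassing the maximal inequality. The bounded Lipschitz case then reduces to the special Lipschitz case via a standard partition of unity subordinate to the covering $\{U_\nu\}$. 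The hard part will be the interaction of the almost-diagonal kernel estimate with the non-translation-invariant cutoff $\1_\omega$; the cone-support condition \ref{Item::Space::Supp} is exactly what makes this cutoff harmless and reduces matters to a standard Littlewood--Paley almost-orthogonality argument.
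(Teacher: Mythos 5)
The paper does not supply its own proof of this proposition; it states the result and attributes it directly to Rychkov's 1999 paper, with the sentence immediately preceding the statement reading ``Rychkov in the paper \cite{Ryc99} proves the following remarkable extension theorem.'' There is therefore no in-paper argument for you to be compared against, and your write-up is a reconstruction of Rychkov's original proof. As such a reconstruction it is essentially correct and follows Rychkov's scheme faithfully: (i) the cone support \ref{Item::Space::Supp} together with $\|\nabla g\|_{L^\infty}<L$ gives $x+\Kb^L\subset\omega$ for $x\in\omega$, so $\phi_j\ast f$ is well defined on $\omega$ and $\Ec_\omega f|_\omega=f$ via \ref{Item::Space::Approx}; (ii) the decomposition $\lambda_k\ast\Ec_\omega f=\sum_j(\lambda_k\ast\psi_j)\ast(\1_\omega(\phi_j\ast f))$ is controlled by an almost-diagonal kernel bound combining the compact Fourier support of $\lambda_k$ with the infinite-order moment vanishing of $\psi_j$ for $j\geq1$; (iii) discrete Young in $j,k$ and the Fefferman--Stein vector-valued maximal inequality (with the $r$-power modification $\mathcal M_r$ when $\min(p,q)\leq1$, and a direct supremum bound at $p=q=\infty$) close the estimate.

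Two places need tightening before this is a complete argument. First, your displayed pointwise bound has $\mathcal M(\phi_j\ast f)$ where it must be $\mathcal M\bigl(\1_\omega(\phi_j\ast f)\bigr)$: the intrinsic norm only controls $\phi_j\ast f$ on $\omega$, and for $f$ merely a distribution on $\omega$ the convolution $\phi_j\ast f$ is not even defined off $\omega$; the cutoff is what makes the maximal function finite and lets $\|\mathcal M(\1_\omega(\phi_j\ast f))\|_{L^p(\R^N)}$ be dominated by $\|\phi_j\ast f\|_{L^p(\omega)}$. Second, the almost-diagonal kernel estimate needs a case split at $j=0$: by \ref{Item::Space::Momt} $\int\psi_0=1$, so $\widehat\psi_0(0)\neq0$ and the argument ``$\widehat\psi_j(\xi)=O(|\xi|^M)$ near $0$'' only applies to $j\geq1$. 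For $j=0$ and $k\geq1$ the decay in $k$ comes solely from $\widehat\lambda_k$ having no low-frequency support and from the Schwartz decay of $\widehat\psi_0$; the resulting bound is still of the form you wrote (with $\min(j,k)=0$), but the justification is different. Neither point changes the outcome, but both would have to be spelled out in a complete write-up.
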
 
\begin{cor} \label{Cor::F_space} 
  Let $\Om$ be a bounded Lipschitz domain in $\R^N$ 
and let $(p,q)$ be a pair such that either $p <\infty$ or $p=q=\infty$. Then the intrinsic norm $\| \cdot \|_{\Fs^{s,in}_{pq} (\Om)}$ is equivalent to the extrinsic norm $\| \cdot \|_{\Fs^{s}_{pq} (\Om)}$ 
(see Definition \re{F_space_ext_norm}), and therefore $\Fs^{s,in}_{pq} (\Om) = \Fs^{s}_{pq} (\Om) $.   
\end{cor}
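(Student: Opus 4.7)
The plan is to establish the two inequalities
\[
\|f\|_{\Fs^s_{pq}(\Om)} \lesssim \|f\|_{\Fs^{s,in}_{pq}(\Om)} \quad \text{and} \quad \|f\|_{\Fs^{s,in}_{pq}(\Om)} \lesssim \|f\|_{\Fs^s_{pq}(\Om)}
\]
separately. The first is essentially a direct consequence of Rychkov's extension theorem, while the second uses the support condition \ref{Item::Space::Supp} together with the local means characterization of $\Fs^s_{pq}(\R^N)$.

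For the first inequality, I would argue as follows. Given any $f$ with finite intrinsic norm, the function $\Ec_\Om f$ is an extension of $f$ to $\R^N$, and by \rp{Prop::Rychkov} it satisfies $\|\Ec_\Om f\|_{\Fs^s_{pq}(\R^N)} \lesssim \|f\|_{\Fs^{s,in}_{pq}(\Om)}$. Since the extrinsic norm is the infimum over all extensions, we conclude $\|f\|_{\Fs^s_{pq}(\Om)} \le \|\Ec_\Om f\|_{\Fs^s_{pq}(\R^N)} \lesssim \|f\|_{\Fs^{s,in}_{pq}(\Om)}$.

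For the reverse inequality, the key geometric observation is that for a special Lipschitz domain $\om=\{x_N>\rho(x')\}$ with $\|\nabla\rho\|_{L^\infty}<L$, and $\phi_j$ supported in the negative cone $-\Kb^L=\{y_N<-L|y'|\}$, one has $x-y\in\om$ whenever $x\in\om$ and $y\in\supp\phi_j$; indeed $(x-y)_N-\rho((x-y)') > x_N+L|y'|-\rho(x')-L|y'| = x_N-\rho(x')>0$. Consequently, if $\tilde f\in\Fs^s_{pq}(\R^N)$ is any extension of $f$, then $\phi_j*\tilde f(x)=\phi_j*f(x)$ for every $x\in\om$, so
\[
\|f\|_{\Fs^{s,in}_{pq}(\phi)} \le \bigl\|(2^{js}\phi_j*\tilde f)_{j=0}^\infty\bigr\|_{L^p(\ell^q)(\R^N)}.
\]
For a bounded Lipschitz domain $\Om$ one uses the covering $\{U_\nu\}$ from \rd{Def::Lip_dom} together with a partition of unity to reduce to the special Lipschitz case, the only additional bookkeeping being the pullback by the affine maps $\Phi_\nu$, which preserve Triebel-Lizorkin norms up to a constant depending on $\lip_{U_\nu}(\Om)$.

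It remains to identify the $\phi$-based norm on $\R^N$ with the standard one, i.e.\ to show $\|(2^{js}\phi_j*g)_j\|_{L^p(\ell^q)}\approx\|g\|_{\Fs^s_{pq}(\R^N)}$ for all $g\in\Fs^s_{pq}(\R^N)$. This is exactly the local means characterization of Triebel-Lizorkin spaces by families $(\phi_j)$ satisfying the moment condition \ref{Item::Space::Momt}, the scaling condition \ref{Item::Space::Scal}, and the approximate-identity property \ref{Item::Space::Approx}; it is the main technical result of \cite{Ryc99} and is precisely the mechanism making his universal extension operator bounded on all the scales simultaneously. Taking this as granted (in the admissible range for $(p,q)$), combining with the previous display, and taking infimum over extensions $\tilde f$ yields the second inequality. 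The main obstacle is this last step: the passage from the asymmetric, one-sided family $(\phi_j)$ to an arbitrary dyadic resolution $\lambda\in\Cf$ is not formal and relies on the Calderón-type reproducing formula built into Rychkov's construction; however, since this equivalence is already encoded in \rp{Prop::Rychkov} (which is stated for the intrinsic $\phi$-norm on both sides), no additional work is needed in our setting.
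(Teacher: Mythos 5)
Your argument is correct and is essentially the intended one: the paper states this corollary without proof as an immediate consequence of \rp{Prop::Rychkov}, and your two steps are exactly what is implicit there --- the extension bound yields $\|f\|_{\Fs^s_{pq}(\Om)}\lesssim\|f\|_{\Fs^{s,in}_{pq}(\Om)}$, while the support condition \ref{Item::Space::Supp} together with the local means characterization from \cite{Ryc99} yields the converse. The only slip is your closing remark: the local means equivalence on $\R^N$ is not literally contained in \rp{Prop::Rychkov} (which bounds $\Ec_\Om$ from the intrinsic norm into the standard norm, not a two-sided statement), so that step should rest on your citation of \cite{Ryc99} rather than on the proposition itself.
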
 

Both the Sobolev space and the H\"older-Zygmund space are special cases of the Triebel-Lizorkin spaces, as shown by the following result. 
\begin{prop} \label{Prop::Equiv_spaces} 
   Let $\Om$ be either $\R^N$ or a bounded Lipschitz domain. Let $\phi = (\phi_j)_{j=0}^\infty$ be any Littlewood-Paley family satisfying properties \ref{Item::Space::Scal} - \ref{Item::Space::Supp}. 
\begin{enumerate}[(i)] 
    \item For all $1<p<\infty$ and $s \in \R$, $H^{s,p}(\Om)  = \Fs^s_{p2}(\Om) = \Fs^{s,in}_{p2}(\phi)$.    
    \item For all $s > 0$, $ \La^s(\Om) = \Fs^s_{\infty \infty}(\Om) = \Fs^{s,in}_{\infty \infty}(\phi) $. 
\end{enumerate}
\end{prop}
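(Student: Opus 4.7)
The plan is to establish each identification first on $\R^N$ using classical Littlewood-Paley theory, and then transfer everything to the bounded Lipschitz domain case by invoking the Rychkov extension (\rp{Prop::Rychkov}) together with \rc{Cor::F_space}. Once the $\R^N$ equivalences $H^{s,p}(\R^N)=\Fs^s_{p2}(\R^N)$ and $\La^s(\R^N)=\Fs^s_{\infty\infty}(\R^N)$ are known, both the restriction-based (extrinsic) equalities on $\Om$ follow tautologically from \rd{Def::Sobolev_Dom} and \rd{Def::H-Z}, because each Sobolev/H\"older-Zygmund norm on $\Om$ is defined as the infimum of the $\R^N$ norm over all extensions, and the same holds for $\Fs^s_{pq}(\Om)$ by \eqref{F_space_ext_norm}. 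The intrinsic identifications $\Fs^{s,in}_{pq}(\phi)=\Fs^s_{pq}(\Om)$ are then exactly the content of \rc{Cor::F_space}, which applies since $(p,q)=(p,2)$ with $1<p<\infty$ and $(p,q)=(\infty,\infty)$ both lie in the admissible range for $\Fs$.

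For part (i) on $\R^N$, I would recall that the Bessel potential $(I-\Del)^{s/2}$ is an $L^p$-isomorphism between $H^{s,p}(\R^N)$ and $L^p(\R^N)$, and show that its Fourier multiplier $(1+4\pi^2|\xi|^2)^{s/2}$ interacts well with any dyadic resolution $(\la_j)$. Concretely, for $j\ge 1$ the multiplier $(1+4\pi^2|\xi|^2)^{s/2}\hht\la_j(\xi)$ equals $2^{js}m_j(\xi)$ where $m_j(\xi)=2^{-js}(1+4\pi^2|\xi|^2)^{s/2}\hht\la_0(2^{-j}\xi)-\cdots$ is a smooth function with compact annular support whose derivatives obey Mihlin-type bounds uniformly in $j$. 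Combining this with the classical Littlewood-Paley square-function inequality
\[
\|f\|_{L^p(\R^N)} \;\approx\; \Bl\| \Bl(\sum_{j=0}^\infty |\la_j\ast f|^2\Br)^{1/2}\Br\|_{L^p(\R^N)},\qquad 1<p<\infty,
\]
applied to $(I-\Del)^{s/2}f$ and to $f$, yields $\|f\|_{H^{s,p}(\R^N)}\approx\|f\|_{\Fs^s_{p2}(\R^N)}$. The vector-valued Calder\'on-Zygmund / Mihlin multiplier theorem is the main analytic input here; this is the one technical result I would import from, e.g., \cite{Tri83}.

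For part (ii) on $\R^N$, the equivalence $\La^s(\R^N)=\Fs^s_{\infty\infty}(\R^N)$ reduces to showing
\[
\|f\|_{\La^s(\R^N)} \;\approx\; \sup_{j\ge 0} 2^{js}\|\la_j\ast f\|_{L^\infty(\R^N)}.
\]
The $\gtrsim$ direction I would obtain by expressing $\la_j\ast f$ as a second difference of $f$ at scale $2^{-j}$ (using that $\hht\la_j$ vanishes near the origin for $j\ge 1$, hence $\la_j$ has vanishing moments), which immediately gives $\|\la_j\ast f\|_{L^\infty}\ls 2^{-js}\|f\|_{\La^s}$ for $0<s\le 1$ and extends to general $s>0$ by the recursive definition together with Bernstein's inequality $\|\na(\la_j\ast f)\|_{L^\infty}\ls 2^j\|\la_j\ast f\|_{L^\infty}$. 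Conversely, summing the telescoping decomposition $f=\sum_j\la_j\ast f$ and estimating second differences of each dyadic block via Bernstein's inequality yields the $\ls$ direction, with convergence and the integer-$s$ case handled by the standard bootstrap argument.

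The transfer step for both parts is immediate: restriction of the $\R^N$ equivalence gives the extrinsic identities on $\Om$, and \rc{Cor::F_space} supplies the intrinsic equivalence. I expect the main obstacle to be purely expository, namely tracking the two conventions (Bessel-potential definition of $H^{s,p}$ versus Littlewood-Paley definition of $\Fs^s_{p2}$) through the Mihlin multiplier argument on $\R^N$; once that is set up, the H\"older-Zygmund identification and the passage to Lipschitz domains are essentially bookkeeping given Rychkov's theorem.
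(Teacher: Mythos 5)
Your proposal is correct and follows essentially the same route as the paper: the paper simply cites Triebel \cite{Tri83} for the classical $\R^N$ identifications $H^{s,p}(\R^N)=\Fs^s_{p2}(\R^N)$ and $\La^s(\R^N)=\Fs^s_{\infty\infty}(\R^N)$ (the very facts you sketch via Mihlin multipliers and second differences), and then, exactly as you do, deduces the extrinsic equalities on $\Om$ from the restriction definitions and the intrinsic one from \rc{Cor::F_space}.
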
 
\begin{proof}
   For the statement $H^{s,p}(\R^N) = \Fs^s_{p,2}(\R^N)$ for $s \in \R$ and $1 < p <\infty$, the proof is in \cite[p.~88]{Tri83}. For $\La^s(\R^N) = \Fs^s_{\infty \infty}(\R^N)$, $s>0$, the proof can be found in \cite[p.~90]{Tri83}. For bounded Lipschitz domain $\Om$, the spaces $H^{s,p}(\Om)$, $\La^s(\ov \Om)$ $\Fs^s_{pq}(\Om)$ are defined as the restrictions of the corresponding spaces on $\R^n$, so the statements follow from that of $\R^n$. The assertions $\Fs^s_{p2}(\Om) = \Fs^{s,in}_{p2}$ and $\Fs^s_{\infty \infty}(\Om) = \Fs^{s,in}_{\infty \infty}(\phi)$ follow immediately from \rc{Cor::F_space}. 
\end{proof}
As a consequence to \rp{Prop::Rychkov} and \rp{Prop::Equiv_spaces}, we have the following extension result for the Sobolev and H\"older-Zygmund space.   
\begin{cor} \label{Cor::Rychkov_ext} 
   Let $\Om$ be a bounded Lipschitz domain in $\R^N$. 
Then the Rychkov extension operator $\Ec_\Om$ defined on $\Om$ is bounded between the following spaces: 
\begin{enumerate}[(i)]
    \item $\Ec_\Om: H^{s,p}(\Om) \to H^{s,p}(\R^N)$, for any $s \in \R$ and $1 < p <\infty$. 
    \item $\Ec_\Om: \La^s(\ov \Om) \to \La^s(\R^N)$, for any $s >0$. 
\end{enumerate}
\end{cor}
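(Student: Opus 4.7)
The plan is to simply chain together the two results immediately preceding the corollary: Proposition \ref{Prop::Rychkov} (Rychkov's extension theorem on Triebel--Lizorkin spaces) and Proposition \ref{Prop::Equiv_spaces} (identification of $H^{s,p}$ and $\La^s$ as Triebel--Lizorkin spaces). No new estimates are needed; the proof is a bookkeeping argument matching indices $(p,q)$ to the admissibility conditions in Rychkov's theorem.

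For part (i), fix $s \in \R$ and $1 < p < \infty$, and let $q = 2$. By \rp{Prop::Equiv_spaces}(i) we have the identification $H^{s,p}(\Om) = \Fs^{s,in}_{p2}(\Om)$ at the level of norms. Since $p < \infty$, the pair $(p,2)$ is admissible for the class $\Fs^s_{pq}$, so Rychkov's theorem (\rp{Prop::Rychkov}) gives that $\Ec_\Om : \Fs^{s,in}_{p2}(\Om) \to \Fs^s_{p2}(\R^N)$ is bounded. Applying \rp{Prop::Equiv_spaces}(i) again on $\R^N$ to identify $\Fs^s_{p2}(\R^N) = H^{s,p}(\R^N)$ yields the claim.

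For part (ii), fix $s > 0$ and take $(p,q) = (\infty,\infty)$, which is admissible for the Triebel--Lizorkin class by the clause ``$p = q = \infty$'' in the definition. By \rp{Prop::Equiv_spaces}(ii) we have $\La^s(\ov\Om) = \Fs^{s,in}_{\infty\infty}(\Om)$, and Rychkov's theorem supplies the boundedness $\Ec_\Om : \Fs^{s,in}_{\infty\infty}(\Om) \to \Fs^s_{\infty\infty}(\R^N)$. Identifying the target as $\La^s(\R^N)$ via \rp{Prop::Equiv_spaces}(ii) finishes the proof.

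There is no serious obstacle; the only thing to check is that the admissibility hypothesis in \rp{Prop::Rychkov} (namely $p < \infty$ or $p = q = \infty$) is met in both cases, which it is ($p < \infty$ in (i) and $p = q = \infty$ in (ii)). The proposition and corollary already do all the work, so the write-up is just a three-step diagram chase.
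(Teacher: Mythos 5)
Your proposal is correct and matches the paper exactly: the paper gives no separate proof block for this corollary but instead introduces it with ``As a consequence to \rp{Prop::Rychkov} and \rp{Prop::Equiv_spaces},'' which is precisely the two-step identification-then-extension chain you spell out. The only content is verifying the admissibility conditions $(p,2)$ with $p<\infty$ and $(p,q)=(\infty,\infty)$, which you do.
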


\begin{prop}[Equivalence of norms] 
\label{Prop::Equiv_norm} 
Let $\Om$ be a bounded Lipschitz domain in $\R^N$. Let $0<p<\infty$, $0<q\le\infty$ or $p=q=\infty$. Then for any non-negative integer $k$, the following norms are equivalent. 
\[
  \| f \|_{\Fs^s_{pq}(\Om)} \approx \sum_{|\all| \leq k} \| D^\all f \|_{\Fs^{s-k}_{pq}(\Om)}.   
\]
\end{prop}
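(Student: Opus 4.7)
The strategy is to transfer the desired equivalence from $\R^N$ to the domain $\Om$ via Rychkov's universal extension operator $\Ec_\Om$ (\rc{Cor::Rychkov_ext}) and the corresponding $\R^N$ statement
\[
\| g \|_{\Fs^s_{pq}(\R^N)} \approx \sum_{|\alpha|\leq k} \| D^\alpha g \|_{\Fs^{s-k}_{pq}(\R^N)},
\]
which is classical: the ``$\gtrsim$'' half follows since each $D^\alpha$ is a Fourier multiplier of symbol order $|\alpha|$, and the ``$\lesssim$'' half is the Bessel-potential lifting $(I-\Delta)^{k/2}\colon\Fs^s_{pq}\to\Fs^{s-k}_{pq}$ (see e.g.\ \cite[Thm.~2.3.8]{Tri83}); for even $k$ the Bessel potential is an honest differential operator, and for odd $k$ one peels off a single derivative and reduces to the even case.

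For the direction $\sum_{|\alpha|\leq k}\|D^\alpha f\|_{\Fs^{s-k}_{pq}(\Om)}\ls\|f\|_{\Fs^s_{pq}(\Om)}$, I pick any extension $\wti f\in\Fs^s_{pq}(\R^N)$ of $f$ with $\|\wti f\|_{\Fs^s_{pq}(\R^N)}\leq 2\|f\|_{\Fs^s_{pq}(\Om)}$. Distributional derivatives commute with restriction, so $D^\alpha\wti f$ is an extension of $D^\alpha f$, and the $\R^N$ equivalence gives
\[
\|D^\alpha f\|_{\Fs^{s-k}_{pq}(\Om)}\leq\|D^\alpha\wti f\|_{\Fs^{s-k}_{pq}(\R^N)}\ls\|\wti f\|_{\Fs^s_{pq}(\R^N)}\ls\|f\|_{\Fs^s_{pq}(\Om)}.
\]

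For the reverse direction I take the specific extension $\wti f:=\Ec_\Om f$. Applying the $\R^N$ equivalence to $\wti f$ and using $\|f\|_{\Fs^s_{pq}(\Om)}\leq\|\wti f\|_{\Fs^s_{pq}(\R^N)}$ reduces matters to the bound
\[
\|D^\alpha \Ec_\Om f\|_{\Fs^{s-k}_{pq}(\R^N)}\ls\sum_{|\beta|\leq k}\|D^\beta f\|_{\Fs^{s-k}_{pq}(\Om)}\quad\text{for }|\alpha|\leq k.
\]
The main obstacle is that $\Ec_\Om$ does not commute with $D^\alpha$: differentiating the factor $\1_\Om$ in formula \eqref{Eqn::Space::ExtOp} produces surface distributions on $b\Om$, so one cannot simply apply Rychkov's universality to each $D^\alpha f$ separately.

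To bypass this I would pass to the intrinsic norm $\|\cdot\|_{\Fs^{s,in}_{pq}(\phi,\Om)}$ and exploit the support condition \ref{Item::Space::Supp}. For $j\geq 1$ and $x\in\Om$, the cone condition underlying \eqref{supp_phi_j} forces $\supp\phi_j(x-\cdot)\subset\Om$, so integration by parts gives $(\phi_j\ast D^\alpha f)(x) = (-1)^{|\alpha|}((D^\alpha\phi_j)\ast f)(x)$ with no boundary term. The scaling rule \ref{Item::Space::Scal} rewrites $D^\alpha\phi_j = 2^{(j-1)|\alpha|}\eta_j$, where $\eta=(\eta_j)$ is the rescaled family generated by $\eta_1:=D^\alpha\phi_1$; the infinite-order moment vanishing of $\phi_1$ passes to $\eta_1$ because $\int x^\beta D^\alpha\phi_1\,dx$ collapses by integration by parts to a lower-degree moment of $\phi_1$. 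After replacing $\eta_0$ with any Schwartz function with $\int\eta_0=1$ and $\supp\eta_0\subset-\Kb^L$, the family $\eta$ is again a $\Kb^L$-Littlewood-Paley pair, and Rychkov's independence-of-pair theorem (\rp{Prop::Rychkov} together with \rc{Cor::F_space}) gives $\|f\|_{\Fs^{s,in}_{pq}(\eta,\Om)}\approx\|f\|_{\Fs^s_{pq}(\Om)}$. For $|\alpha|=k$ the factor $2^{(j-1)k}$ compensates the shift from $s-k$ back to $s$, so $\|D^\alpha f\|_{\Fs^{s-k,in}_{pq}(\phi,\Om)}$ matches $\|f\|_{\Fs^{s,in}_{pq}(\eta,\Om)}$ modulo a $j=0$ correction; this correction is absorbed into $\|f\|_{\Fs^{s-k}_{pq}(\Om)}=\|D^\0 f\|_{\Fs^{s-k}_{pq}(\Om)}$, which already appears on the right-hand side. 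The trickiest bookkeeping is confirming that the $j=0$ correction is absorbed uniformly in $f$, and that the modified $\eta_0$ can be paired with a compatible dual to form a genuine Rychkov pair.
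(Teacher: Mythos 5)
Your easy direction is fine, and the overall strategy of transferring the classical $\R^N$ lifting estimate to the domain via Rychkov's extension and the intrinsic norm is the right one. But the hard direction, as you've set it up, has a genuine gap that is more than bookkeeping: the rescaled family $\eta^{(\alpha)}$ with $\eta^{(\alpha)}_1 = D^\alpha\phi_1$ for a \emph{single} multi-index $\alpha$ with $|\alpha|=k\ge 1$ cannot be a $\Kb^L$-Littlewood-Paley pair, and no compatible dual exists. The obstruction is Fourier-side degeneracy. Since $\widehat{\eta^{(\alpha)}_j}(\xi)= (2\pi i)^{|\alpha|}\,2^{-(j-1)|\alpha|}\,\xi^\alpha\,\widehat{\phi_j}(\xi)$ for $j\ge 1$, the functions $\widehat{\eta^{(\alpha)}_j}$ all vanish identically on the hyperplane $\{\xi^\alpha=0\}$ (e.g.\ on $\{\xi_1=0\}$ if $\alpha=(k,0,\dots,0)$). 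The approximate identity \ref{Item::Space::Approx} would require $\sum_{j\ge 0}\widehat{\psi'_j}\,\widehat{\eta^{(\alpha)}_j}\equiv 1$, but on that hyperplane the sum collapses to $\widehat{\psi'_0}\,\widehat{\eta_0}$, a Schwartz function that decays at infinity — so it cannot equal $1$ there. Hence ``Rychkov's independence-of-pair theorem'' cannot be invoked, and the claimed equivalence $\|f\|_{\Fs^{s,in}_{pq}(\eta,\Om)}\approx\|f\|_{\Fs^s_{pq}(\Om)}$ fails; this breaks the step you flagged as ``trickiest bookkeeping.''

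To repair this you would need to combine the pieces $D^\alpha\phi_1$ with $|\alpha|=k$ into an elliptic combination — for example $(-\Delta)^{k/2}\phi_1$ when $k$ is even, or a suitable combination yielding a symbol comparable to $|\xi|^k$ — so that the vanishing set of $\widehat{\eta_j}$ is only $\{\xi=0\}$, where $\widehat{\phi_j}$ already vanishes to infinite order. Even then, producing the companion $\psi'$ satisfying \ref{Item::Space::Scal}--\ref{Item::Space::Supp} is a nontrivial construction (this is the content of Rychkov's Lemma on the existence of such pairs) and needs to be carried out, not assumed. Odd $k$ requires an extra step. For what it's worth, the paper itself does not carry out this argument: it simply cites \cite[Thm 1.1]{S-Y24_2} for this proposition, so your attempt at a self-contained proof is not contrasted against any detailed argument in the text, but the gap above would need to be closed.
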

\begin{proof}
   This is \cite[Thm 1.1]{S-Y24_2}.    
\end{proof}
 
The following commutator estimate plays a crucial role in our proof. 
\begin{prop}{\cite[Proposition 5.8]{S-Y25}}   \label{Prop::comm_est} 
  Let $\Omega\subseteq\R^N$ be a bounded Lipschitz domain, and let $\Ec = \Ec_\Om$ be the Rychkov extension operator on $\Om$. Denote $\del_{b\Om}(x) := \dist(x, b\Om)$. Then for $1<p<\infty$ and $s >0$, the following estimates hold.  
\begin{gather*} 
  \bn \del_{b\Om}^{1-s}[D, \Ec]f  \bn_{L^p (\overline\Omega^c)}  \leq C_{s,p} \| f \|_{H^{s,p} (\Om)},\quad\forall f\in H^{s,p}(\Omega); 
  \\
  \bn \del_{b\Om}^{1-s}[D, \Ec]f \bn_{L^\infty (\overline\Omega^c)}  \leq C_s \| f \|_{\Lambda^s (\Om)},\quad\forall f\in \Lambda^s(\Omega). 
\end{gather*}  
\end{prop}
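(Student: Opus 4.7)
The plan is to exploit the explicit form of $\Ec_\omega$ to decompose $[D,\Ec]f$ into dyadic pieces, each supported in a thin shell parallel to the boundary, and to estimate each piece by a trace-type inequality adapted to the frequency scale of $\phi_j$.

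First I reduce to a special Lipschitz domain $\omega=\{x_N>\rho(x')\}$ by a partition of unity, since $\Ec_\Omega$ is assembled from the local operators $\Ec_{\omega_\nu}$ and every quantity in the estimate is local near $b\Omega$. Starting from \eqref{Eqn::Space::ExtOp}, commuting $D$ past the outer convolution and applying the distributional product rule $\1_\omega\cdot D(\phi_j*f)=D(\1_\omega(\phi_j*f))-(D\1_\omega)(\phi_j*f)$ yields the key identity
\[
 [D,\Ec_\omega]f=\sum_{j=0}^\infty \psi_j*\bigl((D\1_\omega)\cdot(\phi_j*f)\bigr),
\]
where $D\1_\omega$ is a signed measure supported on $b\omega$ with density given by (components of) the outward normal.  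By $\supp\psi_j\subset-\Kb^L$, the scaling \ref{Item::Space::Scal}, and \eqref{supp_phi_j}, the $j$-th summand $T_jf$ is supported in the dyadic shell $\{x\in\overline\omega^c : c_0 2^{-j}\le \del_{b\omega}(x)\le C_0 2^{-j}\}$, and distinct shells are essentially disjoint.

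On the $j$-th shell, $\del_{b\omega}^{1-s}\approx 2^{-j(1-s)}$.  The pointwise bound $|T_jf(x)|\lesssim 2^j\sup_{y\in b\omega\cap B(x,C 2^{-j})}|(\phi_j*f)(y)|$ follows from $\|\psi_j\|_{L^\infty}\lesssim 2^{jN}$ and the $\lesssim 2^{-j(N-1)}$ surface measure of a boundary ball of radius $2^{-j}$.  Combined with the frequency-adapted trace estimate $\|(\phi_j*f)|_{b\omega}\|_{L^p(b\omega)}\lesssim 2^{j/p}\|\phi_j*f\|_{L^p(\R^N)}$ (sharp because $\phi_j$ has spatial scale $2^{-j}$), and the $2^{-j/p}$ thickness factor from integrating in the transverse direction across the shell, this gives
\[
 \bn \del_{b\omega}^{1-s}T_jf\bn_{L^p(\overline\omega^c)}\lesssim 2^{js}\|\phi_j*f\|_{L^p(\R^N)}.
\]
Summing over the essentially disjoint shells and invoking the Littlewood-Paley characterization of \rp{Prop::Equiv_spaces} yields both claimed estimates; the $\Lambda^s$ case ($p=q=\infty$) is simpler since $\sup_j$ replaces the sum and no square function is needed.

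The main obstacle lies in the range $1<p<2$: the naive sum-of-norms above produces the Besov norm $\|f\|_{\Bs^s_{p,p}}=\|f\|_{\Fs^s_{p,p}}$ which is strictly stronger than $\|f\|_{H^{s,p}}=\|f\|_{\Fs^s_{p,2}}$.  To close this gap I would upgrade the pointwise bound above to $|T_jf(x)|\lesssim 2^j M_r(\phi_j*f)(x^\ast)$ for a Hardy-Littlewood type maximal operator $M_r$ with $r<p$, using that $\phi_j*f$ is essentially constant on balls of radius $2^{-j}$, and then apply the Fefferman-Stein vector-valued maximal inequality to the sequence $(M_r(\phi_j*f))_{j\ge0}$ to restore the $L^p(\ell^2)$ norm on the right-hand side.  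The remainder of the proof is then a careful bookkeeping of the support and scale matching between $\psi_j$ and the boundary trace of $\phi_j*f$; the weighted power $\del_{b\omega}^{1-s}$ has been engineered precisely to balance the two scales across the entire shell decomposition.
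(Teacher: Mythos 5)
This proposition is cited from \cite[Proposition~5.8]{S-Y25} and not proved in the present paper, so there is no internal proof to compare against. Evaluating your argument on its own terms: the key identity
$[D,\Ec_\omega]f=\sum_{j\ge 0}\psi_j\ast\bigl((D\1_\omega)(\phi_j\ast f)\bigr)$
is correct (Leibniz rule plus $\phi_j\ast Df=D(\phi_j\ast f)$, both legitimate thanks to the cone support condition that makes $\phi_j\ast f$ smooth up to $b\omega$ from inside), and the observation that the $j$-th term lives in a dyadic shell $\{\delta_{b\omega}\approx 2^{-j}\}$ outside $\omega$ is the right geometric picture. You also correctly diagnose that the naive sum of the shell estimates gives $\|f\|_{\Bs^s_{p,p}}$, which for $1<p<2$ is strictly \emph{larger} than $\|f\|_{\Fs^s_{p,2}}=\|f\|_{H^{s,p}}$, so that paragraph does \emph{not} prove the claimed bound in that range, and the Fefferman--Stein vector-valued maximal inequality is indeed the right device to repair it.

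However, there is a genuine gap that both your first argument and the proposed repair lean on without justification: the Peetre-type estimate $\sup_{|y-x|\lesssim 2^{-j}}|\phi_j\ast f(y)|\lesssim M_r(\phi_j\ast f)(x)$, which you paraphrase as ``$\phi_j\ast f$ is essentially constant on balls of radius $2^{-j}$.'' This is standard when $\widehat\phi_j$ is supported in $\{|\xi|\lesssim 2^j\}$, but Rychkov's $\phi_j$ are compactly supported in \emph{physical} space and so are \emph{not} band-limited; the Plancherel--P\'olya/Nikolskii machinery does not apply directly. (The same issue underlies your intermediate trace inequality $\|(\phi_j\ast f)|_{b\omega}\|_{L^p(b\omega)}\lesssim 2^{j/p}\|\phi_j\ast f\|_{L^p(\R^N)}$.) Closing this requires the substitute developed by Rychkov: one uses the reproducing identity $\sum_k\psi_k\ast\phi_k=\delta_0$ together with the infinite vanishing moments of $\phi_1,\psi_1$ to prove an off-diagonal decay estimate $|\phi_j\ast\psi_k\ast g|\lesssim 2^{-M|j-k|}M_r(\phi_k\ast g)$ for arbitrarily large $M$, and then sums in $k$; this is precisely where the moment condition \ref{Item::Space::Momt} enters and cannot be skipped. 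Two secondary points worth flagging: the disjointness of the shells uses that $\supp\psi_j\subset B(0,C2^{-j})$, which holds in Rychkov's construction but is stronger than the listed property $\supp\psi_j\subset-\Kb^L$ (if only rapid decay is available one must carry geometrically decaying tails); and the passage from a special Lipschitz domain to a bounded one produces, beyond the $\Ec_{\omega_\nu}$, commutator terms with the partition-of-unity cutoffs, which are lower order but should be acknowledged.
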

We collect a few more useful facts about the spaces:
\begin{defn}
	Let $X_0,X_1$ be two Banach spaces that belong to a larger ambient space. For $0<\theta<1$. The \emph{complex interpolation space $\blb X_0, X_1 \brb_\theta$} is defined to be the space consisting of all $f(\theta)\in X_0+X_1$, where $f:\{z\in\C:0\leq \RE z\leq 1\}\to X_0+X_1$ is a continuous map that is analytic in the interior, such that $f(it)\in X_0$ and $f(1+it)\in X_1$ for all $t\in\R$. The norm is given by
	\[  
	\|u\|_{[X_0,X_1]_\theta}=\inf_{f} \{\sup\limits_{t \in\R}(\|f(it)\|_{X_0}+\|f(1+it)\|_{X_1}):u=f(\theta)\}.
	\]  
\end{defn} 
\begin{prop} \label{Prop::interpol}   
  Let $\Om$ be a bounded Lipschitz domain in $\R^n$. Let $1<p<\infty$ and $s_0,s_1 \in \R$. Set $s_\thh = (1-\thh) s_0 + \thh s_1$, for $0< \thh < 1$. Then 
\begin{gather*}
\blb H^{s_0,p}(\Om), H^{s_1,p}(\Om) \brb_\thh 
=  H^{s_\thh,p}(\Om); 
\\
\blb \La^{s_0}(\Om), \La^{s_1}(\Om) \brb_\thh = \La^{s_\thh}(\Om).  
\end{gather*} 
\end{prop}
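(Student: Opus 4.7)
The plan is to reduce the interpolation identities on the Lipschitz domain $\Om$ to the corresponding identities on $\R^N$, exploiting the crucial fact that Rychkov's extension operator is \emph{universal}: a single operator $\Ec_\Om$ simultaneously provides a bounded extension for every Sobolev space $H^{s,p}$ and every H\"older--Zygmund space $\La^s$.

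First I would recall that on $\R^N$ the identities
\[
\blb H^{s_0,p}(\R^N), H^{s_1,p}(\R^N) \brb_\thh = H^{s_\thh,p}(\R^N), \qquad
\blb \La^{s_0}(\R^N), \La^{s_1}(\R^N) \brb_\thh = \La^{s_\thh}(\R^N)
\]
are classical. For the Bessel potential spaces with $1<p<\infty$, the identification follows from Stein's complex interpolation theorem applied to the analytic family $(I-\Del)^{-z/2}$ together with the $L^p$-boundedness of imaginary powers. For the H\"older--Zygmund scale, viewing $\La^s = \Fs^s_{\infty\infty}$ via \rp{Prop::Equiv_spaces}, one invokes the complex interpolation results for Triebel--Lizorkin spaces in \cite{Tri83}. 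I would simply cite these results.

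Next, let $\Ec = \Ec_\Om$ be the Rychkov extension operator from \rd{Defn::Space::ExtOp}, and let $R_\Om$ denote the restriction of tempered distributions to $\Om$, so that $R_\Om \circ \Ec = \mathrm{Id}$. By \rc{Cor::Rychkov_ext}, $\Ec$ is bounded from $H^{s,p}(\Om)$ to $H^{s,p}(\R^N)$ for every $s \in \R$ and $1<p<\infty$, and from $\La^s(\ov\Om)$ to $\La^s(\R^N)$ for every $s>0$. The restriction $R_\Om$ is trivially bounded in the reverse direction directly from the definition of the spaces on $\Om$ as quotient spaces.

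The final step is the standard retraction-coretraction principle for complex interpolation: if $E: Y_i \to X_i$ and $R: X_i \to Y_i$ are bounded with $RE = \mathrm{Id}_{Y_i}$ for $i=0,1$, then
\[
\blb Y_0, Y_1 \brb_\thh = R\bl \blb X_0, X_1 \brb_\thh \br
\]
with equivalent norms (see Bergh--L\"ofstr\"om, Theorem 6.4.2). Applying this with $E=\Ec_\Om$, $R=R_\Om$, and $(X_0, X_1)$ chosen as the relevant couple on $\R^N$, the two claimed identities on $\Om$ follow immediately. I expect the main obstacle to be the $\R^N$-version for the H\"older--Zygmund scale: because $\La^s$ involves an $L^\infty$-type norm and Schwartz functions are not dense in $\La^s$, the soft density-based interpolation arguments fail, and one must appeal to Calder\'on's original complex interpolation construction together with the Littlewood--Paley characterization of $\Fs^s_{\infty\infty}$. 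Once this delicate identification on $\R^N$ is secured, the transfer to $\Om$ is essentially formal by the universality of the Rychkov operator.
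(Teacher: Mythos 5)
Your architecture is sound, and it is a genuinely different route from the paper's. The paper disposes of the proposition by citation alone: the Sobolev identity is quoted directly in its Lipschitz-domain form from \cite[Cor.~1.111]{Tri06}, and the Zygmund identity is referred to \cite[Thm.~6.4.5]{B-L76} via the identification $\La^s=\Fs^s_{\infty\infty}=B^s_{\infty\infty}$. You instead take the $\R^N$ identities as input and transfer them to $\Om$ by the retraction--coretraction argument with $E=\Ec_\Om$ and $R=R_\Om$; that step is correct and needs only \rc{Cor::Rychkov_ext}, \rp{Prop::opt_interpol} and $R_\Om\Ec_\Om=\mathrm{Id}$: $R_\Om$ maps $[H^{s_0,p}(\R^N),H^{s_1,p}(\R^N)]_\thh=H^{s_\thh,p}(\R^N)$ into $[H^{s_0,p}(\Om),H^{s_1,p}(\Om)]_\thh$ and onto $H^{s_\thh,p}(\Om)$, while $\Ec_\Om$ gives the reverse inclusion. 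This buys you the domain-transfer step that the paper leaves implicit for the Zygmund scale (the cited theorem in \cite{B-L76} is stated on $\R^n$), at the cost of having to justify the $\R^N$ endpoint results yourself. For the Zygmund part you must of course take $s_0,s_1>0$, which is the only regime where $\La^s(\Om)$ and \rc{Cor::Rychkov_ext}(ii) are available; the abstract retraction lemma is e.g. in \cite[Sec.~1.2.4]{Tri95}.

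The one soft spot is your handling of the $\R^N$ H\"older--Zygmund identity. For Calder\'on's first complex method, $X_0\cap X_1$ is dense in $[X_0,X_1]_\thh$; since $\La^{s_0}(\R^N)\cap\La^{s_1}(\R^N)=\La^{\max(s_0,s_1)}(\R^N)$ is \emph{not} dense in $\La^{s_\thh}(\R^N)$ (its closure is the little Zygmund space), ``Calder\'on's original construction plus Littlewood--Paley'' cannot by itself deliver $[\La^{s_0},\La^{s_1}]_\thh=\La^{s_\thh}$ in the strict first-method sense; this is exactly the $p=q=\infty$ degeneracy you sensed. The standard remedies are to argue by duality, $\La^s(\R^N)=\bl B^{-s}_{1,1}(\R^N)\br'$, with the duality/upper complex method, to note that the paper's definition of $[\cdot,\cdot]_\thh$ imposes no boundary continuity or decay normalization (so the density obstruction does not apply verbatim), or simply to quote \cite[Thm.~6.4.5]{B-L76} as the paper does. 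With any of these replacing that one sentence, your proof is complete.
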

The reader can find the proof for the first statement in \cite[p.~70, Corollary 1.111]{Tri06}, and the proof of the second statement in \cite[p.~152, Theorem 6.4.5.]{B-L76}. Notice that in the reference $B^s_{pq}$ denotes the Besov space and we use the identification $\La^s (\Om)
= \Fs^s_{\infty,\infty}(\Om) = B^s_{\infty,\infty}(\Om)$. 

\begin{prop}[Complex interpolation theorem] \label{Prop::opt_interpol}   
	Let $X_0,X_1,Y_0,Y_1$ be Banach spaces that belong to some larger ambient spaces.  Suppose $T: X_0+X_1\to Y_0+Y_1$ is a linear operator such that for each $i=0,1$,  
	$\|Tu\|_{Y_i}\leq C_0\|u\|_{X_i}$ for all $u\in X_i$. Then $T:[X_0,X_1]_\theta \to [Y_0,Y_1]_\theta$ is bounded linear with $\|Tu\|_{[Y_0,Y_1]_\theta}\leq C_0^{1-\theta}C_1^\theta\|u\|_{[X_0,X_1]_\theta}$ for all $u\in[X_0,X_1]_\theta$.
\end{prop}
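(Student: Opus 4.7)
\medskip

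\noindent\textbf{Proof proposal.} The plan is to apply the classical Calder\'on complex-interpolation argument, the key gadget being an analytic scalar weight that balances the two endpoint constants into the desired geometric mean. (We read the hypothesis as $\|Tu\|_{Y_i}\le C_i\|u\|_{X_i}$ for $i=0,1$; the statement as typeset has $C_0$ on both lines, which would be incompatible with the stated conclusion.)

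First I would fix $u\in[X_0,X_1]_\theta$ and $\varepsilon>0$, and choose an admissible witness $f:\{0\le\RE z\le 1\}\to X_0+X_1$, continuous on the closed strip and analytic in the interior, with $f(\theta)=u$ and
\[
\sup_{t\in\R}\max\bl\|f(it)\|_{X_0},\,\|f(1+it)\|_{X_1}\br\le \|u\|_{[X_0,X_1]_\theta}+\varepsilon.
\]
Push this forward by setting $g(z):=T(f(z))$. Since $T$ is bounded $X_0\to Y_0$ and $X_1\to Y_1$, it is continuous on $X_0+X_1\to Y_0+Y_1$, so $g$ inherits continuity on the closed strip and analyticity in the interior from $f$, with $g(it)\in Y_0$, $g(1+it)\in Y_1$, and $g(\theta)=Tu$. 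However, the naive use of $g$ as an admissible witness in the definition of $[Y_0,Y_1]_\theta$ only gives $\|Tu\|_{[Y_0,Y_1]_\theta}\le \max(C_0,C_1)(\|u\|_{[X_0,X_1]_\theta}+\varepsilon)$, which is not sharp.

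To sharpen the constant to $C_0^{1-\theta}C_1^{\theta}$, I would introduce the entire scalar weight
\[
\phi(z):=C_0^{\,z-1}\,C_1^{\,-z}
\]
(principal branches; well-defined since the bases are positive reals), which satisfies $|\phi(it)|=C_0^{-1}$, $|\phi(1+it)|=C_1^{-1}$, and $\phi(\theta)=C_0^{\theta-1}C_1^{-\theta}$. Set $G(z):=\phi(z)\,T(f(z))$. Scalar multiplication by the entire function $\phi$ preserves continuity on the closed strip and analyticity in the interior, so $G$ is again admissible, and the boundary estimates become
\[
\|G(it)\|_{Y_0}\le C_0^{-1}\cdot C_0\|f(it)\|_{X_0}=\|f(it)\|_{X_0},\qquad \|G(1+it)\|_{Y_1}\le \|f(1+it)\|_{X_1}.
\]
Hence $G$ exhibits $G(\theta)\in[Y_0,Y_1]_\theta$ with norm at most $\|u\|_{[X_0,X_1]_\theta}+\varepsilon$. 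Since $G(\theta)=C_0^{\theta-1}C_1^{-\theta}\,Tu$, reading off the interpolation norm and letting $\varepsilon\downarrow 0$ gives $\|Tu\|_{[Y_0,Y_1]_\theta}\le C_0^{1-\theta}C_1^{\theta}\|u\|_{[X_0,X_1]_\theta}$.

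The only conceptual step -- and the main (mild) obstacle -- is choosing the correct weight $\phi$ so the endpoint constants telescope into exactly the geometric mean; after that the argument is bookkeeping. The one technical check worth including is that $g=Tf$ is genuinely analytic in the interior of the strip, which follows from the (strong) analyticity of $f$ and the continuity of $T:X_0+X_1\to Y_0+Y_1$, since bounded linear maps preserve vector-valued analyticity.
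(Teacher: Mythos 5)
Your argument is correct and is the classical Calder\'on weight argument, which is precisely the proof behind the reference the paper gives for this proposition (the paper offers no proof of its own, citing Triebel, Theorem 1.9.3(a)); you also rightly flag the $C_0$/$C_1$ typo in the hypothesis. The only point worth noting is that the paper's definition of the interpolation norm uses the sum $\|f(it)\|_{X_0}+\|f(1+it)\|_{X_1}$ rather than the maximum, but since your weighted function $G$ satisfies the boundary bounds termwise, the estimate of the sum by the sum goes through verbatim and the conclusion is unchanged.
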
 
\begin{proof}
   See \cite[Theorem 1.9.3(a)]{Tri95}  
\end{proof}

\begin{lemma} \label{Lem:add_supp}     
   Let $R_0 \in Z_+$ and let $\om = \{ x_N < g(x') \}$ be a special Lipschitz domain with $\| \na g \|_{L^\infty} \leq L- 2^{-R_0}$. Then for every $j \in \N$ and $a \in \R$, 
   \[
      \blb -\Kb^L \cap \{ x_N <- 2^{-j} \} \brb + \{ x_N - g(x') < a \}  \subset \{x \in \R^N: x_N - g(x') < a -
b L^{-1} 2^{-R_0-j} \}.  
   \]
\end{lemma}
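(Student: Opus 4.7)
The proof is a direct computation: given a point $y$ in the truncated cone and a point $z$ in the subgraph of $g$ shifted by $a$, we want to show that their sum $x = y + z$ satisfies $x_N - g(x') < a - bL^{-1} 2^{-R_0-j}$. The natural decomposition is
\[
(y_N + z_N) - g(y' + z') \;=\; (z_N - g(z')) \;+\; (g(z') - g(y' + z')) \;+\; y_N.
\]
The first term is $<a$ by the choice of $z$. The second term is controlled by the Lipschitz bound on $g$: $|g(z') - g(y'+z')| \leq (L - 2^{-R_0})|y'|$. Thus the entire task reduces to showing that
\[
y_N + (L - 2^{-R_0})|y'| \;\leq\; -b L^{-1} 2^{-R_0 - j}
\]
for every $y$ satisfying both $y_N < -L|y'|$ and $y_N < -2^{-j}$.

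To squeeze out both pieces of information simultaneously I would use a convex combination. For any $\lambda \in (0,1)$, write $y_N = \lambda y_N + (1-\lambda)y_N$ and plug in the two bounds to get $y_N < -\lambda L |y'| - (1-\lambda) 2^{-j}$, whence
\[
y_N + (L - 2^{-R_0})|y'| \;<\; \bigl(L - 2^{-R_0} - \lambda L\bigr)\,|y'| \;-\; (1-\lambda)\,2^{-j}.
\]
Choosing $\lambda := 1 - L^{-1} 2^{-R_0}$ makes the coefficient of $|y'|$ vanish (and $\lambda \in (0,1)$ since $L \geq 2^{-R_0}$, which is built into the hypothesis $\|\nabla g\|_{L^\infty} \leq L - 2^{-R_0}$). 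This yields $y_N + (L - 2^{-R_0})|y'| < -L^{-1} 2^{-R_0 - j}$, so the lemma holds with $b = 1$.

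There is no real obstacle here beyond finding the right way to combine the two defining inequalities of $-\Kb^L \cap \{x_N < -2^{-j}\}$; the convex combination above is the cleanest device, though one could equivalently split into the cases $L|y'| \geq 2^{-j}$ (use $y_N < -L|y'|$) and $L|y'| < 2^{-j}$ (use $y_N < -2^{-j}$ together with $|y'| < L^{-1} 2^{-j}$). Either route produces the same constant $b = 1$ with no further delicacy.
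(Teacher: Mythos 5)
Your proof is correct and algebraically identical to the paper's: the paper uses $|v'| \le -v_N/L$ to bound $(L - 2^{-R_0})|v'| \le -(1 - L^{-1}2^{-R_0})v_N$ and then plugs in $v_N < -b\,2^{-j}$, which is exactly your convex combination with $\lambda = 1 - L^{-1}2^{-R_0}$ unpacked into two steps. You are also right to flag the stray $b$: the paper's own proof truncates the cone at $\{x_N < -b\,2^{-j}\}$, so $b$ should also appear in the hypothesis (otherwise, as you observe, the printed statement is simply the $b=1$ case).
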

\begin{proof}
  Let $u \in \{ x_N - g(x') < a \}$ and $v \in -\Kb^L \cap \{ x_N < - b 2^{-j} \}$, for $a \in \R$ and $b >0$. Then $u_N- g(u') < a$ and $v_N < - \max(L|v'|, b2^{-j})$. Using $\sup 
|\na \rho| \leq L-2^{-R_0}$ , we get 
\begin{align*}
 u_N + v_N  - g(u'+v') 
&\leq u_N - g(u') + v_N + |g(u') - g(u'+v')| 
\\ &\leq u_N - g(u') + v_N + \bl L - 2^{-R_0} \br |v'| 
\\ &\leq a + b L^{-1} 2^{-R_0} v_N 
\leq a - b L^{-1} 2^{-R_0-j}.   \qedhere 
\end{align*}
\end{proof}
\begin{lemma} \label{Lem::Liding}   
  Let $1 \leq p \leq \infty$ and $s>0$. Then for any $f  \in \Fs^t_{p,\infty}(\R^N)$ with $f \equiv 0$ on $\ov\Om^c$,  
\[
  \| f \|_{L^p(\Om,\del^{-t})} 
  \leq C_{\Om,t} \| f \|_{\Fs^t_{p,\infty}(\R^N)}.  
\]
The constant $C_{\Om,t}$ depends only on $t$ and the Lipschitz norm of $\Om$. 
\end{lemma}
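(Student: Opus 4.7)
The plan is to establish a Hardy-type pointwise bound
\[
\del(x)^{-t} |f(x)| \ls G(x) \quad \text{for a.e.\ } x \in \Om,
\]
where $G$ is a Triebel-type maximal function with $\|G\|_{L^p(\R^N)} \ls \|f\|_{\Fs^t_{p,\infty}(\R^N)}$; integrating the $p$-th power then yields the lemma.

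The starting point is the higher-order difference characterization of Triebel--Lizorkin spaces: fixing an integer $M > t$, one has the equivalence
\[
\|f\|_{\Fs^t_{p,\infty}(\R^N)} \approx \|f\|_{L^p(\R^N)} + \bn \sup_{0 < r \leq 1} r^{-t} \sup_{|h| \leq r} |\Delta_h^M f(\cdot)| \bn_{L^p(\R^N)},
\]
where $\Delta_h^M f(x) = \sum_{k=0}^M (-1)^{M-k} \binom{M}{k} f(x+kh)$. This classical result is valid for all admissible $(p,\infty)$, including $p = \infty$ (in which case $\Fs^t_{\infty,\infty} = \La^t$ and $L^\infty$ replaces $L^p$); see, e.g., \cite[\S 2.5]{Tri83}. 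I let $G$ denote the supremum expression on the right.

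The geometric input is an exterior cone condition: since $\Om$ is a bounded Lipschitz domain, there exist constants $c_0,c_1,c_2 > 0$ depending only on the Lipschitz norm such that for every $x \in \Om$ with $\del(x) < c_0$, one can choose a unit vector $v_x$ (essentially the outward direction) satisfying $x + s\, v_x \in \ov\Om^c$ for every $s \in [c_1 \del(x),\, c_2 M \del(x)]$. Setting $r = c_2 M \del(x) \leq 1$ and $h = (r/M) v_x$, we have $|h| \leq r$ and each $x + kh$, $k=1,\ldots,M$, lies in $\ov\Om^c$; since $f \equiv 0$ there, the higher-order difference collapses to $\Delta_h^M f(x) = (-1)^M f(x)$. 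Thus
\[
\del(x)^{-t} |f(x)| = (c_2 M)^t\, r^{-t} |\Delta_h^M f(x)| \ls G(x),
\]
which is the desired pointwise bound on the near-boundary region. On the complementary set $\{x \in \Om : \del(x) \geq c_0\}$ the weight is bounded, and the estimate follows from the embedding $\Fs^t_{p,\infty}(\R^N) \hookrightarrow L^p(\R^N)$ valid for $t > 0$.

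The main obstacle will be establishing the exterior cone property with constants uniform in $x$; this is handled via the partition of unity implicit in the definition of a bounded Lipschitz domain, reducing matters to graphs of Lipschitz functions $\rho$, for which $v_x$ can be taken to be the downward vertical direction $-e_N$ (since $\ov\Om$ lies locally above the graph of $\rho$). Once this routine geometric step is in place, the remainder is the pointwise computation above.
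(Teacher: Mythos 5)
The crux of your argument is the claimed equivalence $\|f\|_{\Fs^t_{p,\infty}(\R^N)} \approx \|f\|_{L^p} + \bigl\| \sup_{0<r\le 1} r^{-t}\sup_{|h|\le r}|\Delta^M_h f(\cdot)|\bigr\|_{L^p}$, with the supremum over $h$ taken \emph{pointwise in $x$, inside} the $L^p$ norm, and you assert it is classical ``for all admissible $(p,\infty)$''. That is not correct. The classical difference characterization of the $F$-scale (Triebel, Thm.~2.5.11 and its relatives) uses the \emph{ball means} $r^{-N}\int_{|h|\le r}|\Delta^M_h f(x)|\,dh$, not the pointwise supremum. With the supremum, the right-hand side is $+\infty$ on a set of positive measure as soon as $f$ fails to be locally bounded, and $\Fs^t_{p,\infty}(\R^N)$ contains such functions whenever $t<N/p$ (e.g.\ a sum $\sum_i 2^{-i}|x-x_i|^{-\all}\chi(x-x_i)$ over a dense set $\{x_i\}$, with $\all>0$ small, lies in $H^{t,p}\subset \Fs^t_{p,\infty}$ but is unbounded on every ball); for such $f$ the inequality $\|G\|_{L^p}\ls\|f\|_{\Fs^t_{p,\infty}}$ that you rely on is false. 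So your proof only covers roughly the range $t>N/p$ (plus the easy case $p=\infty$), whereas the lemma is asserted for all $t>0$, $1\le p\le\infty$, and the paper invokes it precisely with $p=1$ and arbitrarily small $t$ in the proof of Proposition~\ref{Prop::H-L}(ii). This is a genuine gap, not a citation slip: the pointwise collapse $\Delta^M_h f(x)=(-1)^M f(x)$ for a single well-chosen $h$ is exactly what forces you into the sup-version of the characterization.

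The geometric half of your argument is fine and the approach can be repaired: use the ball-means characterization of $\Fs^t_{p,\infty}$ (valid for all $t>0$, $1\le p\le\infty$), and instead of one exterior direction $v_x$ average over the set of $h$ with $|h|\approx \del(x)$ lying in a truncated exterior cone; this set has measure $\approx \del(x)^N$, and for every $h$ in it one has $x+kh\in\ov\Om^{\,c}$ for all $k=1,\dots,M$, so $\Delta^M_h f(x)=(-1)^M f(x)$ there and the ball mean at radius $r\approx M\del(x)$ dominates $c\,|f(x)|$; the rest of your computation then goes through. Even repaired, note this is a different route from the paper's proof, which never passes through difference characterizations: it uses the one-sided (cone-supported) Rychkov Littlewood--Paley family, observing that $\supp(\phi_j\ast f)$ lies at depth $\gtrsim 2^{-j}$ inside $\Om$, so on the dyadic strip at distance $\sim 2^{-k}$ from $b\Om$ only the pieces with $j\gtrsim k$ contribute, and summing the resulting geometric series gives the weighted bound directly from $\sup_j 2^{jt}|\phi_j\ast f|$.
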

\begin{proof}
The reader can find a proof in \cite[Prop.~5.3]{Yao24}. We include a slightly modified version here for the reader's convenience. We will prove for the case $\Om$ is a special Lipschitz domain and the general case follows by standard partition of unity argument. Let $\om = 
\{ x_N < g(x') \}$, where $x'= (x_1,\dots, x_{N-1})$. Assume that $|g|_{L^\infty(\R^{N-1})} \leq L$. Then 
\[
  \om + \left\{ x_N > L|x'| \right\} \subset \om. 
\]
We partition $\Om$ into dyadic strips.  
\[
  \om = \bigcup_{k \in \Z} S_k:= \bigcup_{k \in \Z} \left\{(x', x_N): 
-2^{\yh -k} < x_N - g(x') < -2^{-\yh -k} \right\}  
\]        
Since $\Kb_L \subset \om$, we can show that
\[
 \frac{1}{\sqrt{1+L^2}} | x_N - g(x')| 
 < \dist(x,b\om) < |x_N - g(x')|. 
\]
Denote $c_L:= (1+L^2)^{-\yh}$. Then  
\begin{equation} \label{Sk_bdy_dist_est} 
      c_L 2^{-\yh-k} < \dist(x,b\om) < 2^{\yh-k}, \quad \text{for $x \in S_k$}.  
\end{equation}

Let $f$ satisfy the hypothesis of lemma, and in particular $\supp f \in \om$. By \ref{Item::Space::Supp} and \re{supp_phi_j}, $\supp \phi_j \subset -\Kb^L \cap \{ x_N < - c_0 2^{-j} \}$. In view of \rl{Lem:add_supp}, there exists $R_0 >0$ such that
\[
  \supp \phi_j + \om \subset \blb -\Kb^L \cap \{ x_N <-c_0 2^{-j} \} \brb + \{ x_N - g(x') < 0 \} \subset \{ x \in \R^N: x_N < -c_0 L^{-1} 2^{-R_0-j} \}.   
\]
By \ref{Item::Space::Approx} we have $f = \sum_{j=0}^\infty \phi_j \ast f$. Observe that
\[
  \supp (\phi_j \ast f) = \supp \phi_j + \supp f \subset 2^{-j} + \om \subset \{ x \in \R^N: x_N < -c_0 L^{-1} 2^{-R_0-j} \}, 
\]
which is disjoint from $S_k$ if $-c_0 L^{-1} 2^{-R_0-j} < - 2^{\yh - k}$, or $j < k-R_0 - \yh + \log_2 (c_0 L^{-1})$. Let $R_1=\lceil R_0 - \yh + \log_2 (c_0 L^{-1}) \rceil$. Then $S_k \cap \supp (\phi_j \ast f) = \emptyset$ if $j< k -R_1$. Consequently we have $f(x) = \sum_{j =k-R_1}^\infty \phi_j \ast f $ for $x \in S_k$. Using \re{Sk_bdy_dist_est}, we have
\begin{align*}
   \|f \|^p_{L^p(\om,\del^{-t})} &= \sum_{k=0}^\infty \| f \|^p_{L^p(\om,\del^{-t})} 
   \leq \sum_{k=0}^\infty c_L^{-tp} 2^{(k+\yh)tp} \| f \|^p_{L^p(S_k)} 
   \\ &\leq c_L^{-tp}  \sum_{k=0}^\infty 2^{(k+\yh)tp} \Bn \sum_{j=k-R_1}^\infty | \phi_j \ast f| \Bn ^p_{L^p(S_k)}  
   \\ &=  c_L^{-tp} \sum_{k=0}^\infty \Bn \sum_{j = k-R_1}^\infty 2^{(k+\yh - j)t } 2^{jt} |\phi_j \ast f| \Bn^p_{L^p(S_k)}. 
\end{align*}
Let $l = j-k $, and the above expression is bounded by 
\begin{align*}
  c_L^{-tp} \sum_{k=0}^\infty \Bn \sum_{l = -R_1}^\infty 2^{(1-l)t } 2^{jt} |\phi_j \ast f| \Bn^p_{L^p(S_k)} &\leq c_L^{-tp} \sum_{k=0}^\infty \sum_{l = -R_1}^\infty 2^{(1-l)t }\Bn \sup_{j \geq k-R_1} 2^{jt} |\phi_j \ast f| \Bn^p_{L^p(S_k)}
  \\ &\leq c_L^{-tp} C_{R_1} \sum_{k=0}^\infty \Bn \sup_{j \in \N} 2^{jt} |\phi_j \ast f| \Bn^p_{L^p(\om)} = C_{\om,t} \| f \|_{\Fs^t_{p\infty}(\phi)}. 
\end{align*}
Here we use the convention $\phi_j \ast f = 0$ for $j \leq -1$. Notice that the constant $C_{\om,t}$ blows up as $L, t \to \infty$. 
\end{proof}
Let $\mr \Fs_{pq}(\R^N)$ be the norm closure of $C^\infty_c(\R^N)$ in $\Fs^s_{pq}(\R^N)$ and let $\mr \Fs_{pq} (\ov \Om)$ be the subspace of $\mr \Fs_{pq}(\R^N)$ defined by $\mr \Fs_{pq} (\ov \Om) = \blc f \in \mr \Fs_{pq}(\R^N): f|_{\ov \Om^c} = 0 \brc$.  
\begin{prop} \label{Prop::duality} 
   Let $\Om$ be a bounded Lipschitz domain or $\R^N$. Then for $s \in \R$ and $1 \leq p \leq \infty$, $\Fs^{-s}_{p1}(\Om) = \blb \mr \Fs^s_{p'\infty} (\Om) \brb'$ . 
\end{prop}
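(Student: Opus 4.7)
The plan is to bootstrap from the classical $\R^N$ duality $[\mathring{\Fs}^s_{p',\infty}(\R^N)]' = \Fs^{-s}_{p,1}(\R^N)$ (see e.g.\ \cite{Tri83}) to the bounded Lipschitz case via standard Banach subspace/quotient duality, the key nontrivial input being a density statement.

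Set $X := \mathring{\Fs}^s_{p',\infty}(\ov\Om)$ and $Y := \mathring{\Fs}^s_{p',\infty}(\R^N)$. Since $X$ is the kernel of the bounded restriction $Y \to \mathring{\Fs}^s_{p',\infty}(\ov\Om^c)$, it is a closed subspace of $Y$, and the subspace--quotient duality for Banach spaces gives $X' = Y'/X^\perp = \Fs^{-s}_{p,1}(\R^N)/X^\perp$. By the defining formula \re{F_space_ext_norm} (applied with parameters $-s,p,1$), $\Fs^{-s}_{p,1}(\Om) = \Fs^{-s}_{p,1}(\R^N)/\{f : f|_\Om = 0\}$, so it suffices to identify $X^\perp$ with this latter subspace. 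The inclusion $\supseteq$ is immediate from $C^\infty_c(\Om) \subset X$: any $f \in X^\perp$ must annihilate $C^\infty_c(\Om)$, i.e.\ satisfy $f|_\Om = 0$.

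The reverse inclusion reduces to the density claim
\[
  \mathring{\Fs}^s_{p',\infty}(\ov\Om) = \overline{C^\infty_c(\Om)}^{\,\Fs^s_{p',\infty}(\R^N)}.
\]
Granting this, for $f$ with $f|_\Om = 0$ and $g \in X$, approximate $g$ by $g_n \in C^\infty_c(\Om)$; then $\langle f, g_n \rangle = 0$ for each $n$ and continuity of the duality pairing on $Y$ forces $\langle f, g \rangle = 0$, as desired.

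The main obstacle is proving this density. I plan to use a partition of unity to reduce to a special Lipschitz domain $\omega = \{x_N > \rho(x')\}$ with $\|\na\rho\|_{L^\infty} \leq L$. For $g \in \mathring{\Fs}^s_{p',\infty}(\ov\omega)$, translate $g_\delta(x) := g(x - \delta e_N)$; then $\supp g_\delta \subset \{x_N \geq \rho(x') + \delta\}$, a set lying at distance $\gtrsim \delta/(1+L)$ from $b\omega$. Translations act as isometries on $\Fs^s_{p',\infty}(\R^N)$, and $T_h \to I$ strongly on $\mathring{\Fs}^s_{p',\infty}$ as $h \to 0$ (standard $\e/3$ argument using density of $C^\infty_c$ inside $\mathring{\Fs}$), so $g_\delta \to g$ in norm. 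Convolving $g_\delta$ with a mollifier of radius $\ll \delta/(1+L)$ produces smooth functions whose supports stay in $\omega$; the supports are compact because $\Om$ is bounded, and the same $\e/3$ argument applied to mollification shows these functions approximate $g_\delta$ in $\Fs^s_{p',\infty}$. A diagonal extraction yields the desired $C^\infty_c(\omega)$-approximation of $g$. The essential point is that $C^\infty_c(\R^N)$ is \emph{not} dense in $\Fs^s_{p',\infty}(\R^N)$ (the space is non-separable when $q = \infty$), so restricting attention to the $C^\infty_c$-closure $\mathring{\Fs}^s_{p',\infty}$ is precisely what allows translation and mollification to be strongly continuous and makes the density argument succeed.
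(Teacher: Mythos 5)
Your proof is correct and follows essentially the same route as the paper's: both start from the classical $\R^N$ duality $\Fs^{-s}_{p,1}(\R^N) = \blb \mr \Fs^s_{p',\infty}(\R^N) \brb'$, reduce via Banach subspace/quotient duality, and identify the annihilator of $\mr \Fs^s_{p',\infty}(\ov\Om)$ with $\{f : f|_\Om = 0\}$ through the density of $C^\infty_c(\Om)$ in $\mr \Fs^s_{p',\infty}(\ov\Om)$ --- the only difference being that the paper cites this density as \cite[Theorem 4.3.2/1]{Tri95} while you supply the standard (and correct) translation-and-mollification argument for it, with the right emphasis on why working in the $C^\infty_c$-closure rather than the full $q=\infty$ space is what makes translation and mollification strongly continuous. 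One small bookkeeping slip: your labels for the two inclusions in $X^\perp = \{f : f|_\Om = 0\}$ are swapped --- the argument ``$f \in X^\perp$ annihilates $C^\infty_c(\Om) \subset X$, hence $f|_\Om = 0$'' proves $X^\perp \subseteq \{f : f|_\Om = 0\}$, which is the immediate direction, while the density claim is precisely what yields the reverse $X^\perp \supseteq \{f : f|_\Om = 0\}$ --- but each of the two arguments is itself correct and just attached to the wrong label.
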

\begin{proof}
    By \cite[Remark 1.5]{Tri20}, we have $\Fs^{-s}_{p1}(\R^N) = \mr \Fs^s_{p'\infty} (\R^N)'$ for all $s \in \R$, $1 \leq p \leq \infty$, and $p' = \frac{p}{p-1}$. 
    Using the same argument as in \cite[Theorem 4.3.2/1]{Tri95}, we have $\mr \Fs_{pq} (\ov \Om)$ is the norm closure of $C^\infty_c(\Om)$ in $\Fs_{pq} (\R^N)$.   
    It follows that
    \begin{align*}
      \blb \mr \Fs^s_{p'\infty} (\ov \Om) \brb ' 
    &= \Fs^{-s}_{p1}(\R^N) / \blc f: \left< f,\phi \right>= 0,\; \forall \: \phi \in \mr \Fs^s_{p'\infty}(\R^N), \phi |_{\ov \Om^c} 
= 0 \brc
\\ &= \Fs^{-s}_{p1}(\R^N) / \blc f: \left< f,\phi \right>= 0, \; \forall \: \phi \in C^\infty_c(\Om) \brc 
\\ &= \Fs^{-s}_{p1} (\R^N) / \{ f: f|_\Om = 0 \} 
= \Fs^{-s}_{p1} (\Om).   \qedhere
    \end{align*}
\end{proof}
\begin{prop} \label{Prop::H-L} 
  Let $1< p < \infty$, $k \in \N$  and $s \in \R$ with $s \leq k$. Let $\Om$ be a bounded Lipschitz domain and denote by $\del_{b\Om}(x)$ the distance from $x$ to the boundary $b \Om$.  
\begin{enumerate}[(i)] 
    \item
  There exists a constant $C$ such that for all $u \in W^{k,p}_{\loc} (\Om)$ with $\| \del_{b\Om}^{k-s } D^{k} u \|_{L^p(\Om)} < \infty$, the following inequality holds
\begin{equation} \label{H-L_Sob_est}  
   \| u \|_{H^{s,p} (\Om)} \leq C \sum_{|\beta| \leq k} \| \del_{b\Om}^{k-s}  D^{\beta} u \|_{L^p(\Om)}.   
\end{equation}		
  \item
 There exists a constant $C$ such that for all $u \in W^{k,p}_{\loc} (\Om)$ with $\| \del_{b\Om}^{k-s} D^{k} u \|_{L^p(\Om)} < \infty$, the following inequality holds
\begin{equation} \label{H-L_HZ_est} 
 \| u \|_{\La^s(\Om)} \leq C \sum_{|\beta| \leq k} \| \del_{b\Om}^{k-s}  D^{\beta} u \|_{L^\infty(\Om)}.     
\end{equation}
\end{enumerate}
\end{prop}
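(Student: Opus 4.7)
The strategy is to reduce both inequalities to a single weighted duality estimate via the lift theorem for Triebel--Lizorkin spaces, and then to exploit the boundary Hardy inequality already proved as \rl{Lem::Liding}. Since $H^{s,p}(\Om)=\Fs^s_{p,2}(\Om)$ and $\La^s(\Om)=\Fs^s_{\infty,\infty}(\Om)$ by \rp{Prop::Equiv_spaces}, and since $t:=k-s\geq 0$, the equivalence of norms \rp{Prop::Equiv_norm} gives
\[
\|u\|_{\Fs^s_{p,q}(\Om)} \;\approx\; \sum_{|\beta|\leq k}\|D^\beta u\|_{\Fs^{s-k}_{p,q}(\Om)}
\]
for $(p,q)=(p,2)$ with $1<p<\infty$ or $(p,q)=(\infty,\infty)$. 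So it suffices to establish the single-function bound
\[
\|v\|_{\Fs^{-t}_{p,q}(\Om)} \;\leq\; C\,\|\del_{b\Om}^{t}\,v\|_{L^p(\Om)}
\]
and then apply it with $v=D^\beta u$ for each $|\beta|\leq k$; summing yields both \re{H-L_Sob_est} and \re{H-L_HZ_est}.

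For this inequality I would first use the elementary monotonicity $\Fs^{-t}_{p,1}(\Om)\hookrightarrow \Fs^{-t}_{p,q}(\Om)$ (from $\ell^1\hookrightarrow \ell^q$) to reduce to the case $q=1$, which is valid for both $1<p<\infty$ and $p=\infty$. Then by the duality identification \rp{Prop::duality}, $\Fs^{-t}_{p,1}(\Om)=[\mr\Fs^t_{p',\infty}(\ov\Om)]'$, and hence
\[
\|v\|_{\Fs^{-t}_{p,1}(\Om)} \;=\; \sup\Bigl\{\,|\langle v,\phi\rangle|:\;\phi\in\mr\Fs^t_{p',\infty}(\ov\Om),\;\|\phi\|\leq 1\,\Bigr\}.
\]
For any admissible $\phi$, viewed as an element of $\Fs^t_{p',\infty}(\R^N)$ vanishing on $\ov\Om^c$, \rl{Lem::Liding} yields $\|\del_{b\Om}^{-t}\phi\|_{L^{p'}(\Om)}\leq C$. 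Then H\"older's inequality gives
\[
\Bigl|\int_\Om v\,\phi\,dV\Bigr| \;\leq\; \|\del_{b\Om}^{t}v\|_{L^p(\Om)}\,\|\del_{b\Om}^{-t}\phi\|_{L^{p'}(\Om)} \;\leq\; C\,\|\del_{b\Om}^{t}v\|_{L^p(\Om)},
\]
and taking the supremum in $\phi$ closes the loop.

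The main obstacle is legitimizing the duality pairing: $v=D^\beta u$ is only known to belong to $L^p_{\loc}(\Om)$ with a boundary-weighted global bound, so it need not extend to a tempered distribution on $\R^N$ and hence is not a priori an element of $\Fs^{-t}_{p,1}(\Om)$. The H\"older--Hardy estimate above is precisely the tool that shows the linear functional $\phi\mapsto\int_\Om v\phi\,dV$, initially well-defined for $\phi\in C^\infty_c(\Om)$, extends continuously to $\mr\Fs^t_{p',\infty}(\ov\Om)$; \rp{Prop::duality} then identifies it with an element of $\Fs^{-t}_{p,1}(\Om)$ whose distributional restriction to $\Om$ coincides with $v$, which is the precise meaning of the asserted inequality. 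A minor secondary point is that applying \rp{Prop::Equiv_norm} in the direction "finite right-hand side $\Rightarrow$ $u\in\Fs^s_{p,q}(\Om)$" uses the full equivalence statement rather than just one inclusion, exactly as it is formulated there.
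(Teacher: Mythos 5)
Your proposal is correct, and for part~(ii) it is essentially the same duality-plus-Hardy argument the paper uses; but you deviate from the paper in two worthwhile ways. First, you prove part~(i) by the identical mechanism, whereas the paper just cites \cite[Prop.~A.2]{S-Y24_1} for the Sobolev case; your uniform treatment in $(p,q)$ makes the proposition self-contained in the paper's own framework. Second, the pairing is arranged differently. You invoke \rp{Prop::duality} literally as stated, $\Fs^{-t}_{p,1}(\Om)=\bigl[\mr\Fs^t_{p',\infty}(\Om)\bigr]'$, hit the test function $\phi$ with \rl{Lem::Liding} to get $\|\del_{b\Om}^{-t}\phi\|_{L^{p'}}\lesssim 1$, and then push through $\ell^1\hookrightarrow\ell^q$ on the primal side; the paper instead asserts $\La^{-t}(\Om)=\Fs^{-t}_{\infty,\infty}(\Om)=\bigl[\mr\Fs^t_{1,1}(\Om)\bigr]'$ (a slightly different duality from the one \rp{Prop::duality} actually states) and places the $\ell^1\hookrightarrow\ell^\infty$ monotonicity inside the dual, going $\mr\Fs^t_{1,1}(\Om)\subset\mr\Fs^t_{1,\infty}(\Om)\subset L^1(\Om,\del^{-t})$ before dualizing once. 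Your route has the advantage of matching \rp{Prop::duality} exactly, at the cost of working with $\Fs^{-t}_{\infty,1}$ for part~(ii), a space the paper's Definition~\ref{Defn::Prem::BsFsDef} excludes (it admits $p=\infty$ only with $q=\infty$); this is harmless provided one adopts the Carleson-measure definition of $\Fs^s_{\infty,q}$ from \cite{Tri20}, under which $\Fs^s_{\infty,q}\hookrightarrow\Fs^s_{\infty,\infty}$ indeed holds, but you should flag that you are stepping outside the stated admissibility. Finally, you are right to call out the implicit use of \rp{Prop::Equiv_norm} in the direction ``finite weighted right-hand side $\Rightarrow u\in\Fs^s_{p,q}(\Om)$'': the paper glosses over exactly the same point (``it is clear that \re{H-L_HZ_est} is an immediate consequence of\ldots''), so this is a shared, standard technicality rather than a defect of your proof relative to theirs.
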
  
\begin{proof}
(i) See \cite[Prop.~A.2]{S-Y24_1}.   
\\[5pt]  
(ii) By \rp{Prop::Equiv_norm} we have the following equivalence of norms: 
\[
  |u|_{\Om,s} \approx \sum_{|\all|\leq k} |D^\all u|_{\Om,s-k}, \quad \forall \; s \in \R, \; k \in \N.  
\]
It is clear that \re{H-L_HZ_est} is an immediate consequence of the following statement: for all $u \in L^\infty_\loc(\Om)$, 
\[
  |u|_{\Om,-t} \leq C \| \del_{b\Om}^t u \|_{L^\infty(\Om)}, \quad t>0. 
\]
By \rp{Prop::duality}, we have $\La^{-t}(\Om) = \Fs^{-t}_{\infty,\infty}(\Om) = \blb \mr \Fs^t_{1,1} (\Om) \brb'$, where $\mr \Fs^t_{1,1} (\Om)$ denotes the norm closure of $C^\infty_c(\Om)$ in $\Fs^t_{1,1}(\R^N)$.   
By definition we see that  
\begin{align*}
   \| f \|_{\Fs^t_{1,\infty}(\Om)} 
  &= \left\| \sup_{j \in \N} 2^{jt} |\la_j \ast f| \right\|_{L^1(\Om)} 
 \leq \int_{\Om} \sum_{j \in \N} 2^{jt} |\la_j \ast f(x)| \, dx = \sum_{j \in \N} 2^{jt} \| \la_j \ast f \|_{L^1(\Om)} = \| f \|_{\Fs^t_{1,1}(\Om)}.     
\end{align*}
Thus $\Fs^t_{1,1}(\Om) \subset \Fs^t_{1,\infty}(\Om)$ and $\mr \Fs^t_{1,1}(\Om) \subset 
\mr \Fs^t_{1,\infty}(\Om)$. 
It is clear that $\mr \Fs^t_{1,\infty}(\Om) \subset 
\{ f \in \Fs^t_{1,\infty}(\R^N): f \equiv 0 \: \text{on} \: \ov \Om^c\} $. 
Now, by \rl{Lem::Liding}, we have 
\[
  \{ f \in \Fs^t_{1,\infty}(\R^N): f \equiv 0 \: \text{on} \: \ov \Om^c\} \subset L^1 \bl \Om, \del^{-t} \br, \quad t>0.  
  \]
In summary we have $\mr \Fs^t_{1,1}(\Om) \subset L^1(\Om,\del_{b\Om}^{-t})$, and $|f|_{L^1(\Om,\del_{b\Om}^{-t})} \leq C |f|_{\Fs^t_{1,1}(\Om)}$ for any $f \in \mr \Fs^t_{1,1}(\Om)$. It follows that 
\[
 L^\infty (\Om, \del_{b\Om}^t) = \blb L^1(\Om,\del_{b\Om}^{-t}) \brb'\subset  \blb \mr \Fs^t_{1,1} (\Om) \brb' = \La^{-t}(\Om), 
\]
and $|u|_{\Om,-t} \leq C |u|_{L^\infty 
(\Om,\del_{b\Om}^t)}$ for all $u$. The proof is complete.  
\end{proof}
\vspace{5pt}     
\section{Construction of the holomorphic support function}

Let $D$ be a bounded pseudoconvex domain with $C^\infty$ boundary in $\C^2$ of finite type $m$. Let $r$ be the defining function of $D$ such that $D = \{ z \in U: r(z)<0 \}$, where $U$ is a neighborhood of the closure of $D$. 
Fix $p_0 \in bD$ and a small neighborhood $U_0$ of $p_0$. 
By definition of finite type (\cite{Koh72, B-G77}), $m$ is the least integer with the following property: if $h$ is any germ about the origin of a holomorphic map from $\C$ to $\C^2$ with $h(0) = p_0$ and $d h(0) \neq 0$, then $r \circ h$ vanishes to order at most $m$ at the origin. 

For each $p \in U_0$, we introduce a special holomorphic coordinate system $\phi_p: \C^2 \to \C^2$ as in Catlin \cite{Cat89}. The map $\phi_p$ is constructed as follows. Assume $\pp{r}{z_2}(p) \neq 0$. We define $\Phi^1: \zeta^{(1)} \mapsto z$ by 
\begin{gather*}
   \zeta_1 = z_1 - p_1
\\ 
   \zeta_2 = 2 \pp{r}{z_1}(p) (z_1-p_1) + 2 \pp{r}{z_2}(p) (z_2 - p_2) . 
\end{gather*}
Denote $\rho^{(1)}(\zeta) = r \circ \Phi^{(1)} (\zeta)$. Then $\rho^{(1)}(0) = r(p)$, and 
\[
  \rho^{(1)}(\zeta) = r(p) + \RE \zeta_2 + O(|\zeta|^2). 
\]
In general, suppose that for $2 \leq l < m$, we have $\rho^{(l-1)} = r \circ \Phi^{(1)} \circ \cdots \Phi^{(l-1)} $, where 
 \[
   \rho^{(l-1)}(\zeta) = r(p) + \RE w_2 + \sum_{\substack{j+k \leq l-1 \\ j,k \geq 1} } a_{j,k}(p) w_1^j \ov w_1^k  
   + \RE (2 b_l(p) w_1^l) + O(|w_1|^{l} + |w_2| |w|).   
 \]
 We then define $\Phi^l: \zeta \mapsto w$ by 
\begin{gather*}
   \zeta_1  = w_1 , 
   \quad
   \zeta_2 = w_2 + 2 b_l(p) w_1^l . 
\end{gather*}
So then $\rho^{(l)} (\zeta) = r \circ \Phi^{(1)} \cdots \circ \Phi^{(l)} (\zeta)$ takes the form 
\[
  \rho^{(l+1)} (\zeta) = r(p) + \RE \zeta_2 
  + \sum_{\substack{j+k \leq l \\ j,k \geq 1} } a_{j,k}(p) \zeta_1^j \ov \zeta_1^k + O(|\zeta_1|^{l+1} + |\zeta_2| |\zeta|).   
\]
Let $\phi_p = \Phi^{(1)} \cdots 
\circ \Phi^{(m)}$. Then 
$\rho_p := r \circ \phi_p - r(p)$ takes the form 
\begin{equation} \label{rho_Taylor_exp}   
   \rho_p (\zeta) = \RE \zeta_2 + \sum_{\substack{j,k \geq 1 \\[1pt] j + k \leq m}  } a_{j,k} (p) \zeta_1^j \ov \zeta_1^k + O(|\zeta_1|^{m+1} + |\zeta_2 | |\zeta|).     
\end{equation}
$\rho_p$ is the defining function of the domain $\Om_p = \phi_p^{-1}(D)$.  
The coefficients $a_{j,k}(p)$ are linear combinations of products of derivatives of $r$ at $p$ up to order $m$ and depend smoothly on $p \in U_0$. 

For $l = 2, \dots, m$ and $\del >0$, set 
\[
 A_l(q) = \max \{ |a_{j,k} (q)|: j + k = l \}, 
\]
and 
\[
  \tau(q,\del) = \min \left\{ \left( \frac{\del}{A_l(q)} 
\right)^{\frac{1}{l}} : 2 \leq l \leq m \right\}.  
\]
By the assumption that $p_0 \in bD$ is of finite type $m$, we know that at least one of the functions $a_{j,k}(p_0)$ is non-zero (otherwise the curve $t \mapsto (\phi_{p_0} (t),0)$ has order of tangency with $bD$ greater than $m$ at $p_0$.) Hence by continuity and shrinking $U_0$, we can assume that the same function $a_{j,k}(q) \neq 0$ for all $q \in U_0$, and in particular $A_l(q) >0 $ for some $2 \leq l \leq m$.
It follows that 
\begin{equation} \label{tau_del_power_est} 
 \del^{\yh} \lesssim \tau (q,\del) \lesssim \del^{\frac{1}{m}}, \quad q \in U_0.     
\end{equation} 
The definition of $\tau(q,\del)$ also implies that if $\del'<\del''$, then 
\begin{equation} \label{tau_scaling}  
  (\del / \del'')^\yh \tau (q, \del'') 
  \leq \tau (q, \del') \leq  
  (\del / \del'')^{\frac{1}{m}} \tau (q',\del'').  
\end{equation}

For $\del \geq 0$ we define
\begin{equation} \label{J_del_def} 
   J_\del (p,\zeta) = \left[ \del^2 
+ |\zeta_2|^2 + \sum_{k=2}^m A_k(p)^2 |\zeta_1|^{2k} \right]^\yh.    
\end{equation}
For $a>0$ and $\gm \geq 0$, we define the generalized ``nonisotropic'' polydisk $P^{(a)}_{\del,\gm} (\zeta')$ centered at $\zeta'$ by 
\[
  P^{(a)}_{\del,\gm} (\zeta') = \left\{ \zeta \in \C^2: |\zeta_2 - \zeta'_2| < a (J_\del (p,\zeta') + \gm), \; |\zeta_1 - \zeta_1'| < \tau \bl p,a J_\del(p,\zeta') \br + a \gm \right\}.  
\]

\begin{lemma} \label{Lem::J_del}  
There exists a constant $a >0$ (independent of 
$p, \zeta', \del$), such for $\zeta' \in \Om_p \cap \{ |\zeta|<a \} $ and $\zeta \in P^{(a)}_{\del,\gm} (\zeta')$, the following estimates hold
\begin{gather*}
  J_\del(p,\zeta) \leq C_{m,r} \bl J_\del(p,\zeta') + a \gm \br;  
  \\ 
  J_\del(p,\zeta') \leq C_{m,r} \bl J_\del(p,\zeta) + a \gm \br.  
\end{gather*}  
\end{lemma}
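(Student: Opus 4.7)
The plan is to unfold $J_\del(p,\zeta)^2 = \del^2 + |\zeta_2|^2 + \sum_{k=2}^m A_k(p)^2 |\zeta_1|^{2k}$ and bound each summand by the corresponding quantity at $\zeta'$ using the triangle inequality together with the polydisk conditions. The single algebraic fact driving everything is that the definition $\tau(p,\del) = \min_{2 \le l \le m}(\del/A_l(p))^{1/l}$ gives, for every $2 \le k \le m$, the inequality $A_k(p)\,\tau(p,\del)^k \le \del$. This is the precise matching between $\tau$, used to measure the $\zeta_1$-radius of the polydisk, and the weights $A_k(p)$ appearing in $J_\del$.

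For the first inequality I would write $|\zeta_2| \le |\zeta_2'| + a\bl J_\del(p,\zeta')+\gm \br \le 2\bl J_\del(p,\zeta')+a\gm \br$ (using $a\le 1$ and $|\zeta_2'|\le J_\del(p,\zeta')$), and $A_k(p)|\zeta_1|^k \le C_k A_k(p)\blb |\zeta_1'|^k + \tau(p,aJ_\del(p,\zeta'))^k + (a\gm)^k \brb$. The first two summands inside the bracket are dominated by $J_\del(p,\zeta')$ and $aJ_\del(p,\zeta')$ respectively, the second via the key algebraic fact applied at $\del=aJ_\del(p,\zeta')$. The remaining term $A_k(p)(a\gm)^k$ is the one subtle point: since $p$ ranges over the relatively compact $U_0$ there is a uniform bound $A_k(p) \le M$, and combined with the bounded range of $\gm$ relevant to the applications of the lemma (where $\gm$ will be comparable to a boundary distance), one can shrink $a$ so that $M(a\gm)^{k-1} \le 1$, giving $A_k(p)(a\gm)^k \le a\gm$. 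Squaring and summing then yields $J_\del(p,\zeta)^2 \ls \bl J_\del(p,\zeta')+a\gm \br^2$.

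For the second inequality, running the triangle inequality in the opposite direction and bounding the difference terms exactly as above produces instead an estimate of the form $J_\del(p,\zeta')^2 \le C_0\bl J_\del(p,\zeta)^2 + a^2 J_\del(p,\zeta')^2 + (a\gm)^2 \br$, where $a^2 J_\del(p,\zeta')^2$ appears on the right because the polydisk radii are expressed in terms of $J_\del(p,\zeta')$ and not $J_\del(p,\zeta)$. The decisive step -- and the reason the lemma only asserts the existence of a suitable $a$ -- is to require $C_0 a^2 \le \tfrac12$, so that the $a^2 J_\del(p,\zeta')^2$ term can be absorbed into the left-hand side, producing $J_\del(p,\zeta')^2 \ls J_\del(p,\zeta)^2 + (a\gm)^2$.

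The main obstacle is really just recognizing that the definition of $\tau$ is engineered precisely so that $A_k(p)\tau(p,\del)^k \le \del$; without that observation the polydisk scales and $J_\del$ are not homogeneously compatible. Once this is in place, the proof reduces to triangle-inequality bookkeeping, with the one genuine choice being the smallness of $a$, needed both for the $(a\gm)^k$ control in the forward direction and for the absorption in the reverse.
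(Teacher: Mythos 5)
Your proof is correct and follows essentially the same route as the paper's: expand $J_\del^2$, bound each term via the triangle inequality and the polydisk conditions, use the identity $A_k(p)\,\tau(p,\cdot)^k \le \cdot$ (the paper's eq.~\eqref{tau'_est}) to control the $\tau'$ contributions, and choose $a$ small to absorb the $a^2 J_\del(\zeta')^2$ term in the reverse direction. The only cosmetic difference is the treatment of the $A_k(p)(a\gm)^k$ error terms: the paper simply uses $(a\gm)^{2k} < (a\gm)^2$ for $a,\gm<1$ and lets $C_{m,r}$ absorb the uniform bound on $A_k(p)$ over $U_0$, whereas you further shrink $a$ so that $A_k(p)(a\gm)^{k-1}\le 1$; both are fine.
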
 
\begin{proof}
For simplicity we write $J_\del(\zeta') = J_\del(p,\zeta')$ and $\tau' = \tau(p,aJ_\del(p,\zeta')) $. 
  The condition $\zeta \in P^{(a)}_{\del,\gm}(\zeta')$ implies that 
\begin{equation} \label{zeta_diff} 
\left| |\zeta_2| - |\zeta'_2| \right| \leq a \bigl( J_\del(\zeta') + \gm \bigr), \quad  
\left| |\zeta_1| - |\zeta'_1| \right| \leq \tau' + a\gm.   
\end{equation}  
Thus for $\zeta \in P^{(a)}_{\del,\gm}(\zeta')$, we have
\begin{equation} \label{J_del_ub} 
\begin{aligned}
 J^2_\del(\zeta) 
&= \del^2 + |\zeta_2|^2 + \sum_{k=2}^m \blb A_k(p) \brb ^2 |\zeta_1|^{2k}  
\\ &\leq C_m \Blb \del^2 + |\zeta_2'|^2 + 
 \bl aJ_\del(\zeta') \br^2 + (a\gm)^2 + \sum_{k=2}^m 
\blb A_k(p) \brb^2\left( |\zeta_1'|^{2k} + (\tau')^{2k} + (a\gm)^{2k} \right) \Brb .      
\end{aligned}  
\end{equation}
By definition
\begin{equation} \label{tau'_est} 
  \tau' = \tau(p,a J_\del(\zeta')) \leq \left( \frac{a J_\del(\zeta')}{A_k(p)} \right)^{\frac{1}{k}}
 \quad \text{for} \; 2 \leq k \leq m.   
\end{equation} 
Plugging the above into \re{J_del_ub} and using the fact that $(a\gm)^{2k} < (a \gm)^2$ ($a$,$\gm$<1), we get
\begin{align*}
  J^2_\del(\zeta) &\leq C_{m,r} \Blb \del^2 + |\zeta_2'|^2 + \bl aJ_\del(\zeta') \br^2 + (a\gm)^2 + \sum_{k=2}^m \blb A_k(p) \brb ^2 |\zeta_1'|^{2k} \Brb 
  \\ &\leq C_{m,r} \bl \blb J_\del(\zeta') \brb^2 + (a\gm)^2 \br. 
\end{align*}

For the other direction, we have 
\begin{align*}
 \blb J_\del(\zeta') \brb^2  
&= \del^2 + |\zeta_2'|^2 + \sum_{k=2}^m \blb A_k(p) \brb^2 |\zeta_1'|^{2k}  
 \\ &\leq C_m \Blb \del^2 + |\zeta_2|^2 + 
 \bl aJ_\del(\zeta') \br^2 + (a\gm)^2 + \sum_{k=2}^m 
\blb A_k(p) \brb^2 \left( |\zeta_1|^{2k} + (\tau')^{2k} + (a\gm)^{2k} \right) \Brb  
\\ &= C_{m,r} \left( \blb J_\del(\zeta) \brb^2 +  \bl aJ_\del(\zeta') \br^2 + (a\gm)^2 \right) . 
\end{align*} 
Taking $a< \frac{1}{\sqrt{2 C_{m,r}}}$, we get $ \blb J_\del(\zeta') \brb^2 
\leq C'_{m,r} \Bl \blb J_\del(\zeta) \brb^2 + (a\gm)^2 \Br$. Taking square root on both sides we are done.
\end{proof}
\begin{prop} \label{Prop::polydisk}  
    There are positive constants $a_0$ and $c$, and for each $p \in U_0 \cap b D$, there is a family of pseudoconvex domains $\blc \Om_p^{\del} \brc_{0<\del<\del_0}$ with the following properties:
\begin{enumerate}[(i)]
    \item $0 \in b \Om_p^\ast$, where $\Om_p^\ast := int \blb \cap_{0<\del<\del_0} \Om_p^\del \brb$    
    \item $\{ \zeta \in \ov \Om_p: 0 < |\zeta| < c\} \subset \Om_p^\ast \subset \Om_p^{\del}$;
    \item For $\zeta' \in \Om_p$ with $|\zeta'| < c$ one has 
    \[
       P^{(a_0)}_{\del, \del} (\zeta') \subset \Om_p^{\del}.   
    \]
\end{enumerate}
The constant $a_0$ depends only on the defining function $r$ and it is independent of $\del$ and $\zeta'$.
\end{prop}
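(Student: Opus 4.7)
The plan is to construct $\Om_p^\del$ as the sublevel set $\{\rho_p^\del<0\}$ of a defining function obtained by bumping $\rho_p$ outward by a plurisubharmonic perturbation adapted to the order-$m$ Taylor polynomial at the origin. The natural candidate for the ``bumping polynomial'' is
\[
P_p(\zeta):=\RE\zeta_2+\sum_{\substack{j,k\geq 1\\ j+k\leq m}}a_{j,k}(p)\zeta_1^j\ov\zeta_1^k,
\]
which by pseudoconvexity of $D$ is plurisubharmonic (the $\zeta_1$-polynomial in the sum is subharmonic in $\zeta_1$). Following Catlin \cite{Cat89}, I would form $\rho_p^\del$ by combining $P_p$ with $\rho_p$ via a regularized maximum, carefully scaled so that near the origin the defining function behaves like $P_p$ shifted by a $\del$-scale amount, while away from the origin it agrees with $\rho_p$. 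This produces a pseudoconvex $\Om_p^\del$ that bumps $\Om_p$ outward near $0$ on the nonisotropic scale set by $\tau(p,\del)$.

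The heart of the proof is verifying (iii). Fix $\zeta'\in\Om_p$ with $|\zeta'|<c$ and let $\zeta\in P^{(a_0)}_{\del,\del}(\zeta')$. Taylor expanding $P_p$ about $\zeta'$,
\[
P_p(\zeta)-P_p(\zeta')=\RE(\zeta_2-\zeta_2')+\sum_{j+k\leq m}a_{j,k}(p)\bl \zeta_1^j\ov\zeta_1^k-(\zeta_1')^j(\ov\zeta_1')^k\br.
\]
The polydisk conditions give $|\zeta_2-\zeta_2'|\leq a_0(J_\del(p,\zeta')+\del)$ and $|\zeta_1-\zeta_1'|\leq\tau(p,a_0 J_\del(p,\zeta'))+a_0\del$. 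Telescoping the binomial differences, applying the key bound $A_l(p)\tau(p,\eta)^l\leq\eta$ from the definition of $\tau$, and using $A_l(p)|\zeta_1'|^l\leq J_\del(p,\zeta')$ from the definition of $J_\del$, each polynomial term is controlled by $C_m a_0\bl J_\del(p,\zeta')+\del\br$. Together with \rl{Lem::J_del}, which allows one to interchange $J_\del(p,\zeta)$ and $J_\del(p,\zeta')$ up to constants, this yields the negativity of $\rho_p^\del(\zeta)$ once $a_0$ and $c$ are chosen small relative to the bumping constants.

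Properties (i) and (ii) will then follow from the scaling of the bump: for $\zeta\in\ov\Om_p$ with $0<|\zeta|<c$, the construction keeps $\rho_p^\del(\zeta)<0$ uniformly in $\del$, giving (ii), while the bump at the origin scales like $-K\del$ so that $\rho_p^\del(0)\to 0^-$, placing $0$ on $b\Om_p^*$. The main technical obstacle is the tension between (ii) and (iii): the bump must be generous enough on the nonisotropic polydisk scale $\tau(p,\cdot)$ to capture $P^{(a_0)}_{\del,\del}(\zeta')$ for every $\zeta'$, yet localized enough near the origin for $\Om_p^*$ to touch $bD$ only at $0$. This is precisely what forces the bumping to be built from $P_p$, whose Levi form matches that of $r$ at $p$ on the relevant nonisotropic scales, rather than from a constant offset of $\rho_p$.
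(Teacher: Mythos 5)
The proposal takes a genuinely different route from the paper and has several gaps that, taken together, leave the heart of the matter unproved. The paper's construction takes $\Om_p^\del$ to be the union of $\{\rho<0\}$ with the superlevel set of $\rho^{\ve_0}_\del := \rho + \ve_0 H_{p,\del}$ inside a nonisotropic shell $W_{s_0,\del}(p)$, where $H_{p,\del}$ is the function supplied by Catlin's Proposition~4.1. That proposition is the load-bearing step: it provides a $\del$-dependent, plurisubharmonic-compatible bump that satisfies $-H_{p,\del}\approx J_\del$ uniformly and makes $\{\rho^{\ve_0}_\del=0\}$ a pseudoconvex hypersurface. Your plan to ``combine $P_p$ with $\rho_p$ via a regularized maximum, carefully scaled so that near the origin the defining function behaves like $P_p$ shifted by a $\del$-scale amount'' is precisely where Catlin's analysis lives, and you do not carry it out. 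A few concrete issues:

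First, the assertion that $P_p$ is plurisubharmonic ``by pseudoconvexity of $D$'' (i.e., that $\sum_{j+k\le m,\,j,k\ge 1}a_{j,k}(p)\zeta_1^j\ov\zeta_1^k$ is subharmonic in $\zeta_1$) is a nontrivial lemma; it does not follow from the sign of the Levi form of $\rho_p$ without an argument controlling the higher-order error $O(|\zeta_1|^{m+1}+|\zeta_2||\zeta|)$. Second, even granting that $P_p$ is subharmonic, the required bump is not a $\del$-independent polynomial; it must be a family $H_{p,\del}$ with $-H_{p,\del}\approx J_\del(p,\cdot)$, and the function $-J_\del(p,\cdot)$ is not psh, so a regularized max of $\rho_p$ with a shift of $P_p$ does not straightforwardly produce the claimed $\Om_p^\del$ with all four requirements (pseudoconvexity, containment of $\ov{\Om_p}\cap\{0<|\zeta|<c\}$, $0\in b\Om_p^\ast$, and (iii)) simultaneously. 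Third, your verification of (iii) Taylor-expands $P_p$, not the actual defining function $\rho$; the needed estimate is on $|\rho(\zeta)-\rho(\zeta')|$ over $P^{(a_0)}_{\del,\del}(\zeta')$, and the paper obtains it from Catlin's nonisotropic derivative bound $|D_1^l\rho(\eta)|\lesssim b\,\tau(\zeta',b)^{-l}$ on $R_b(\zeta')$ (\cite[Prop.~1.2]{Cat89}), which handles the $O(|\zeta_1|^{m+1}+|\zeta_2||\zeta|)$ remainder that your telescoping argument ignores. Finally, the paper's argument for (iii) bifurcates into the cases $\zeta'\notin W_{s_0,\del}(p)$ and $\zeta'\in W_{s_0,\del}(p)$, reflecting the two pieces in the definition of $\Om_p^\del$; your sketch does not reproduce this case analysis, which is what actually makes $a_0$ come out depending only on $r$. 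In short, the proposal substitutes a sketch for Catlin's construction and therefore does not amount to a proof of the proposition.
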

\begin{proof}
For simplicity we will write $J_\del(p,\zeta)$ as $J_\del(\zeta)$. Part (i) and (ii) are the same as in \cite[Prop.~2.4, (i),(ii)]{Ran90}, and we shall include the details here for the reader's convenience. 
  For small $s$ and $\del>0$, define 
\[ 
  W_{s,\del}(p) = \{ \zeta \in \C^2: |\zeta|<c \; \text{and} \; |\rho(\zeta)|<s J_\del(\zeta) \}.   
\] 
Let $H_{p,\del}$ be the smooth real-valued function on $W_{s,\del}(p)$ given by \cite[Prop.~4.1]{Cat89}. Catlin \cite[p.449-453]{Cat89} shows that $c,s, \ve_0$ and $\del_0$ can be chosen so that for all $0 < \ve < \ve_0$, $0 < \del \leq \del_0$, the set 
\[
  S_\del = \{ \zeta \in W_{s,\del}(p): \rho^\ve_\del(\zeta) = 0 \}, \quad \rho^\ve_\del(\zeta) := \rho(\zeta) + \ve H_{p,\del}(\zeta) 
\]
is a smooth pseudoconvex hypersurface (from the side $\rho^\ve_\del <0$). All the constants can be chosen independently of $p \in U_0 \cap bD$, and $s,\ve$ can be chosen independently of $\del$. Furthermore, $H_{p,\del}(\zeta) \approx - J_\del(\zeta)$. We now fix $s = s_0, \ve= \ve_0 >0$ independent of $\del$. It follows that  
\begin{equation} \label{Om_p_del} 
  \Om_p^\del = \{ |\zeta|<c: \rho(\zeta)<0 \} \cup 
  \{ \zeta \in W_{s_0,\del} (p): \rho^{\ve_0}_\del(\zeta) <0 \} 
\end{equation} 
is a pseudoconvex domain which satisfies 
\[
  \ov \Om_p \cap \{ |\zeta| < c \} \subset \Om_p^\del \subset \bl \Om_p \cap \{ |\zeta|<c \} \br \cup W_{s_0,\del} (p). 
\]
Define 
\begin{equation} \label{Om_p_ast}  
  \Om_p^\ast = int \bigcap_{0<\del<\del_0} \Om_p^\del. 
\end{equation} 
Then $\Om_p^\ast$ is pseudoconvex. Since  $\rho^{\ve_0}_\del(0) = \rho(0) +  \ve_0 H_{p,\del}(0) \approx -J_{\del}(p,0) = -\del$, it follows that $\dist(0,S_\del) \lesssim \del$ and thus $0 \in b\Om_p^\ast$, which proves (i).   

Next we prove (ii). Suppose $\zeta \in \ov{\Om_p}$ and $0<|\zeta|<c$. Then $\rho(\zeta) \leq 0$. 
\[
  \rho_\del^{\ve_0} (\zeta) = \rho(\zeta) + \ve_0 H_{p,\del}(\zeta) \leq - \ve_0 cJ_\del(\zeta) \leq - c \ve_0 \Bl 
|\zeta_1|^2 + \sum_{k=2}^m [A_k(p)]^2 |\zeta_2|^{2k} \Br^\yh . 
\]
For fixed $\zeta \neq 0$, the expression on the right-hand side is a negative constant independent of $\del$. 
Also it is clear that $\zeta \in W_{s_0,\del}(p)$ since $\rho(\zeta) \leq 0 < c_\zeta \leq s J_\del(\zeta)$ uniformly for all $\del$. Hence $\zeta \in \Om_p^\ast$. This proves (ii). 

We now prove (iii). First, we show that for $\zeta' \in \Om_p \cap \{ |\zeta|<c \}$, we have 
\begin{equation} \label{rho_polydisk_diff_est} 
  |\rho(\zeta) - \rho(\zeta')| < C_\rho a_0^{1/m} J_{\del}(\zeta'), \quad \forall \; \zeta \in P^{(a_0)}_{\del,\del} (\zeta'),  
\end{equation}
where $a_0 >0$ is to be determined. 
Denote 
\[
  R_b(p) = \blc \zeta \in \C^2: |\zeta_1| < \tau(p,b), \; |\zeta_2| < b \brc,   
\]

We observe that $\zeta \in R_{ J_\del(\zeta)} (p)$. Indeed, we have $|\zeta_2| < J_\del(\zeta)$ and for some $ 2 \leq k_\ast \leq m$, 
\[
  \tau (p, J_\del(\zeta) ) = \left( \frac{ J_\del(\zeta)}{A_{k_\ast} (p)} \right)^{\frac{1}{k_\ast}} \geq \left( \frac{ A_{k_\ast}(p) |\zeta_1|^{k_\ast} }{A_{k_\ast}(p)} \right)^{\frac{1}{k_\ast}} = |\zeta_1|. 
\]

By \cite[Prop.~1.2]{Cat89}, we have  
\[
  |D_1^l \rho(\eta)| \lesssim b \bigl[ \tau (\zeta',b) \bigr]^{-l}, \quad \eta \in R_b(\zeta').  
\]
Apply the above estimate with $l=1$ and $ b= 
J_{\del}(\eta)$ to get 
\begin{equation} \label{rho_zeta1_der_est} 
   |D_1 \rho(\eta)| \lesssim 
  J_{\del} (\eta) \bigl[ \tau (p, J_{\del}(\eta) \bigr]^{-1}, 
  \quad \eta \in  \phi_p^{-1} (U_0).   
\end{equation}
In particular the above holds for $\eta \in P^{(a_0)}_{\del,\del}(\zeta')$. 
We now compare $\ov C J_\del(\eta)$ and $\ov C J_\del(\zeta')$ for $\eta \in P^{(a_0)}_{\del, \del}(\zeta')$. By \rl{Lem::J_del} we have 
\begin{gather*}
   J_{\del}(\eta) \leq C_{m,\rho} \bl J_{\del}(\zeta') + a_0 \del \br \leq C'_{m,\rho} J_{\del}(\zeta'); 
\\ 
   J_{\del}(\eta) \geq \bl C_{m,\rho}^{-1} J_{\del}(\zeta') - a_0 \del \br \geq c_{m,\rho} J_{\del}(\zeta'), 
\end{gather*}
where we chose $a_0 < (2C_{m,\rho})^{-1}$. Hence 
\[ 
 |D_1 \rho(\eta)| \leq C_{m,\rho}  
   J_{\del} (\eta) \bigl[ \tau (p, J_{\del}(\eta) \bigr]^{-1} 
  \leq C_{m,\rho} J_{\del} (\zeta') \bigl[ \tau (p, J_{\del}(\zeta') \bigr]^{-1} , \quad \eta \in P^{(a_0)}_{\del, \del}(\zeta'). 
\]
By the mean value theorem and \re{rho_zeta1_der_est}, we have for $\zeta \in P^{(a_0)}_{\del, \del}(\zeta')$,  
\begin{equation} \label{rho_diff}  
\begin{aligned} 
  |\rho (\zeta) - \rho(\zeta')| 
  &\leq C_\rho J_{\del}(\zeta') \tau (p, J_{\del}(\zeta'))^{-1} |\zeta_1 - \zeta_1'| + C_\rho |\zeta_2 
- \zeta_2'|  
\\&\leq C_\rho J_{\del}(\zeta') \tau (p, J_{\del}(\zeta'))^{-1} \blb \tau (p,a_0 J_{\del}(\zeta'))  + a_0 \del \brb + C_\rho a_0 \blb J_{\del}(\zeta') + \del \brb . 
\end{aligned} 
\end{equation}
By \re{tau_scaling}, we have $\tau(p, a_0 J_{\del}(\zeta')) \leq a_0^{1/m} \tau (p, J_{\del}(\zeta')) $. Also $J_{\del}(\zeta') \tau (p, J_{\del}(\zeta'))^{-1} \leq 
1$ by \re{tau_del_power_est} (assume that $J_{\del}(\zeta')<1$). Thus \re{rho_diff} implies $|\rho (\zeta) - \rho(\zeta')| 
\leq C_\rho \blb a_0^{1/m} J_{\del}(\zeta') +  a_0  \del \brb \leq C_\rho'' a_0^{1/m} J_{\del}(\zeta')$ for all $\zeta \in P^{(a_0)}_{\del, \del}(\zeta')$. This proves \re{rho_polydisk_diff_est}.  

To finish the proof we consider two different cases. 
\\[5pt] 
\textit{Case 1. $\zeta' \in \ov \Om_p \cap \{ |\zeta|< c \}  \sm W_{s_0,\del} (p)$.} 

By definition of $W_{s_0,\del}(p)$ we have $\rho(\zeta')<-s_0 J_{\del}(\zeta')$. Together with \re{rho_polydisk_diff_est} this implies that 
\begin{align*}
  \rho(\zeta) \leq \rho(\zeta') + |\rho(\zeta)- \rho(\zeta') | \leq -s_0 J_{\del}(\zeta') + C_\rho a_0^{\frac{1}{m}} J_{\del}(\zeta') <  0,  \quad \zeta \in P^{(a_0)}_{\del,\del}(\zeta'), 
\end{align*}
where we choose $a_0 < [s_0 /C_\rho]^m$. Hence $\zeta \in \Om_p \subset \Om_p^{\del}$.    
\\[5pt] 
\textit{Case 2. $\zeta' \in \ov \Om_p \cap \{ |\zeta|< c \}  \cap W_{s_0,\del} (p)$.} 
It suffices to consider $\zeta \in P^{(a_0)}_{\del,\del}(\zeta') \sm \ov \Om_p$, in which case $\rho(\zeta) \geq 0$. By \re{rho_polydisk_diff_est}, we have  
\begin{align*}
  0 \leq \rho(\zeta) \leq \rho(\zeta') + |\rho(\zeta) - \rho(\zeta') | 
  &\leq C_\rho a_0^{1/m} J_{\del}(\zeta') . 
\end{align*}
By our choice of $a_0 < [s_0/C_\rho]^m$, the last quantity is bounded by $s_0 J_\del(\zeta')$, which means $\zeta \in W_{s_0,\del}(p)$. 
On the other hand, \rl{Lem::J_del} shows that $\rho(\zeta) \leq C_\rho a_0^{1/m} J_{\del}(\zeta') < C'_\rho a_0^{1/m} J_{\del}(\zeta)$. 
Hence
\begin{align*} 
  \rho^{\ve_0}_{\del}(\zeta) := \rho(\zeta) + \ve_0 H_{p,\del}(\zeta) 
\leq C_{\rho} a_0^{\frac{1}{m}} J_{\del}(\zeta) - \ve_0 c J_{\del} (\zeta), \quad \zeta \in P^{(a_0)}_{\del,\ti\del}(\zeta'). 
\end{align*}
The last expression is negative if we choose $a_0 < 
\blb \ve_0 c (C_\rho)^{-1} \brb^m$. This shows that $P^{(a_0)}_{\del, \del} (\zeta') \subset \Om_p^{\del}$.   
\end{proof}  
\begin{lemma} \label{Lem::del_zeta_equiv}   
  Let $\ti \zeta \in \{|\zeta|< c \} \sm \Om_p$ be such that $\dist(\ti \zeta, \Om_p) \approx |\ti \zeta|$. Suppose that there exists $\ti \del >0$ such that $\ti \zeta \in S_{\ti \del} = \{ \rho_{\ti \del}^\ve = 0 \}$, where $\ve < C(p,m)$. Then $\ti \del \approx |\ti \zeta|$.      
\end{lemma}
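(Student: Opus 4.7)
The plan is to chain three comparisons: $\rho(\ti\zeta)\approx|\ti\zeta|$, $\rho(\ti\zeta)\approx \ve J_{\ti\del}(p,\ti\zeta)$, and a bound on $J_{\ti\del}(p,\ti\zeta)$ in terms of $|\ti\zeta|$ and $\ti\del$ coming directly from its definition \re{J_del_def}.

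First, since $\rho$ is a smooth defining function with $|\nabla\rho|$ bounded below in a neighborhood of $b\Om_p$, we have $|\rho(\ti\zeta)|\approx\dist(\ti\zeta,b\Om_p)$ for $\ti\zeta$ close enough to $b\Om_p$. Because $\ti\zeta\notin\Om_p$ we have $\rho(\ti\zeta)\ge0$, and because $0\in b\Om_p$ together with the hypothesis $\dist(\ti\zeta,\Om_p)\approx|\ti\zeta|$, we get $\dist(\ti\zeta,b\Om_p)\approx|\ti\zeta|$. Thus $\rho(\ti\zeta)\approx|\ti\zeta|$. Next, the hypothesis $\ti\zeta\in S_{\ti\del}$ means $\rho(\ti\zeta)+\ve H_{p,\ti\del}(\ti\zeta)=0$, and the Catlin estimate $H_{p,\ti\del}\approx-J_{\ti\del}(p,\cdot)$ yields
\[
 \rho(\ti\zeta)\;\approx\;\ve\, J_{\ti\del}(p,\ti\zeta).
\]
Combining these two comparisons gives $J_{\ti\del}(p,\ti\zeta)\approx|\ti\zeta|/\ve$, with constants independent of $\ve$.

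For the upper bound on $\ti\del$, the inequality $\ti\del\le J_{\ti\del}(p,\ti\zeta)$ immediately yields $\ti\del\lesssim|\ti\zeta|$ (with constant depending on $\ve$, which is fixed). For the lower bound, I would exploit that the ``transverse'' and ``higher-order'' contributions to $J_{\ti\del}$ are bounded by $|\ti\zeta|^2$. Concretely, since $|\ti\zeta|<c$ is small and $|\ti\zeta_1|\le|\ti\zeta|$, one has
\[
   |\ti\zeta_2|^2+\sum_{k=2}^m A_k(p)^2|\ti\zeta_1|^{2k}\;\le\;|\ti\zeta|^2+C\, c^2\,|\ti\zeta|^2\;\le\;2|\ti\zeta|^2,
\]
provided $c$ is small (depending on $p,m$). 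Hence
\[
   \ti\del^2 \;=\; J_{\ti\del}(p,\ti\zeta)^2 - |\ti\zeta_2|^2 - \sum_{k=2}^m A_k(p)^2|\ti\zeta_1|^{2k}\;\ge\;\Big(\tfrac{c_*^2}{\ve^2}-2\Big)|\ti\zeta|^2,
\]
where $c_*$ is the lower comparison constant from $J_{\ti\del}\approx|\ti\zeta|/\ve$. For $\ve<C(p,m):=c_*/\sqrt 3$, the parenthesis is bounded below by a positive constant, so $\ti\del\gtrsim|\ti\zeta|$.

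The main (mild) obstacle is keeping track of how $\ve$ enters: the lower bound on $\ti\del$ relies on the fact that shrinking $\ve$ forces $J_{\ti\del}(p,\ti\zeta)$ to exceed $|\ti\zeta|$ by a factor larger than $\sqrt 2$, which then outweighs the non-$\ti\del^2$ terms in $J_{\ti\del}^2$. This is exactly the role of the smallness assumption $\ve<C(p,m)$. Everything else is a direct bookkeeping exercise using $H_{p,\del}\approx-J_\del$ and the smooth defining function comparison.
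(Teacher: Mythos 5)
Your proof is correct and follows essentially the same route as the paper's: first establish $\rho(\ti\zeta)\approx|\ti\zeta|$ from the distance hypothesis, then use Catlin's estimate $-H_{p,\ti\del}\approx J_{\ti\del}$ together with $\rho(\ti\zeta)=-\ve H_{p,\ti\del}(\ti\zeta)$ to get $J_{\ti\del}(p,\ti\zeta)\approx|\ti\zeta|/\ve$, and finally use the definition of $J_{\ti\del}$ to trade $J_{\ti\del}$ for $\ti\del$, with the smallness of $\ve$ ensuring the lower bound survives. The only cosmetic difference is that you subtract the non-$\ti\del^2$ terms from $J_{\ti\del}^2$ to isolate $\ti\del^2$, whereas the paper uses the equivalent linear bound $J_{\ti\del}\le\ti\del+C_{\rho,p}|\ti\zeta|$; both are direct consequences of \eqref{J_del_def}.
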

\begin{proof}
Since $\dist (\ti \zeta, \Om_p) = |\ti \zeta|$, we can assume that for some $\la_0,\la_1>0$, $\rho(\ti \zeta) \in (\la_0|\ti \zeta|, \la_1 |\ti \zeta|)$. 
  Since $\ti \zeta \in S_{\ti \del}$, we have $\rho(\ti \zeta) + \ve H_{p,\ti \del}(\ti \zeta) = 0$. By \cite[Prop.~4.1.]{Cat89} there exist constants $c_0, C_0 >0$ such that $ c_0 J_{\ti \del}(p,\ti \zeta) < -H_{p,\ti \del}(\ti \zeta) < C_0 J_{\ti \del}(p,\ti \zeta)$. Hence $\rho(\ti \zeta) \in (c_0 \ve J_{\ti \del}(p,\ti \zeta), C_0 \ve J_{\ti \del}(p,\ti \zeta))$. This implies that
\[
  c_0 \ve J_{\ti \del}(p,\ti \zeta) < \la_1 |\ti \zeta|, \quad \la_0 |\ti \zeta| < C_0 \ve J_{\ti \del}(p,\ti \zeta). 
\]
Since $\ti \del \leq J_{\ti \del} (p,\ti \zeta)$, the first inequality above shows that $\ti \del \leq (c_0\ve)^{-1} \la_1 |\ti \zeta| $. On the other hand, by the definition of $J_\del$, there exists $C_{\rho,p}$ such that 
$J_{\ti \del}(p,\ti \zeta) \leq \ti \del + C_{\rho,p} |\ti \zeta|$, so the second inequality shows that $\la_0 |\ti \zeta| \leq C_0 \ve (\ti \del + C_{\rho,p} |\ti \zeta|)$. If $\ve \leq \frac{\la_0}{2C_0 C_{\rho,p}}$, then $|\ti \zeta| \leq C_{\rho,p}^{-1} \ti \del$.   
\end{proof}

\begin{lemma} \label{Lem::zeta_j_seq} 
  There exist a sequence of points $\{ \zeta_j \} \to 0$, and constant $c_0,C_0$ such that $\dist(\zeta_j, b\Om_p) \approx |\zeta_j|$, $c_0 2^{-j} \leq  |\rho(\zeta_j)| \leq C_0 2^{-(j-1)}$, and $\rho(\zeta_j) + \ve H_{p,\del_j}(\zeta_j) = 0$, with $\del_j = 2^{-j}$. The constant $c_0,C_0$ depend only on $r$(or $\rho$), $p$ and $\ve$.      
\end{lemma}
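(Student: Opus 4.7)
The plan is to construct each $\zeta_j$ explicitly as a point on the outward normal ray to $b\Om_p$ at the origin, which in the Catlin coordinates of \re{rho_Taylor_exp} is the positive real $\zeta_2$-axis. This choice makes the geometric condition $\dist(\zeta_j,b\Om_p)\approx|\zeta_j|$ essentially automatic and reduces the existence problem to a one-variable intermediate value theorem.

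First I would parametrize $\zeta(t):=(0,t)$ for real $t>0$. The normal form \re{rho_Taylor_exp} gives $\rho(\zeta(t))=t+O(t^2)$, so $\zeta(t)$ lies just outside $\ov\Om_p$ with $\dist(\zeta(t),b\Om_p)=t(1+O(t))$, while $J_\del(p,\zeta(t))=\sqrt{\del^2+t^2}$. For $\del_j=2^{-j}$ I would then consider the continuous function
\[
f_j(t) := \rho(\zeta(t)) + \ve H_{p,\del_j}(\zeta(t)),
\]
defined on the range of $t$ with $\zeta(t)\in W_{s_0,\del_j}(p)$; assuming $s_0<1$, this range contains an interval $(0,T_j)$ with $T_j\approx \del_j$ (explicitly $T_j\sim s_0\del_j/\sqrt{1-s_0^2}$). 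Using Catlin's two-sided bound $-C_0 J_\del \leq H_{p,\del}\leq -c_0 J_\del$ from \cite[Prop.~4.1]{Cat89}, I would verify
\[
f_j(0)\leq -\ve c_0\del_j < 0, \qquad f_j(t^*) > 0 \ \text{ for some } t^*\in(0,T_j) \ \text{with } t^*\approx\del_j,
\]
the second bound following from $f_j(t^*)\geq t^*-\ve C_0 \sqrt{\del_j^2+(t^*)^2} \gtrsim (s_0-\ve C_0)\del_j$, valid once $\ve<s_0/C_0$. The intermediate value theorem then yields $t_j\in(0,t^*)$ with $f_j(t_j)=0$, and I would set $\zeta_j:=(0,t_j)\in S_{\del_j}$.

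All stated properties would fall out quickly. The IVT range gives $t_j\lesssim \del_j$, while the equation $\rho(\zeta_j)=-\ve H_{p,\del_j}(\zeta_j)\geq \ve c_0\del_j$ together with $\rho(\zeta_j)\approx t_j$ yields the matching lower bound $t_j\gtrsim \ve\del_j$; hence $\zeta_j\to 0$ and $|\zeta_j|=t_j\approx 2^{-j}$. Since $\zeta_j$ sits on the outward normal direction where $\nabla\rho(0)\neq 0$, we have $\dist(\zeta_j,b\Om_p)=\rho(\zeta_j)(1+o(1))\approx t_j=|\zeta_j|$, as required (this is also consistent with \rl{Lem::del_zeta_equiv} applied in reverse). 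Finally, the identity $|\rho(\zeta_j)|=\ve |H_{p,\del_j}(\zeta_j)|$ combined with the Catlin bounds and $\del_j\leq J_{\del_j}(\zeta_j)\leq \sqrt2\,\del_j$ yields $c_0 2^{-j}\leq |\rho(\zeta_j)|\leq C_0 2^{-(j-1)}$ after absorbing the constants $\ve, c_0, C_0$.

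The main obstacle is the bookkeeping around the domain $W_{s_0,\del_j}(p)$ on which Catlin's function $H_{p,\del_j}$ is available: one must guarantee that the sign change of $f_j$ occurs strictly inside $W_{s_0,\del_j}(p)$, which pins down the admissible range of $\ve$ in terms of $s_0$ and $C_0$. Once this compatibility is arranged (and chosen consistent with the upper bound on $\ve$ already required by \rl{Lem::del_zeta_equiv}), everything reduces to a clean one-variable IVT argument backed by the two-sided Catlin estimates on $H_{p,\del_j}$.
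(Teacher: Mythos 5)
Your proof follows essentially the same route as the paper: parametrize along the outward normal at the origin (the positive $x_2$-axis), use Catlin's two-sided bound $-C_0 J_\del \leq H_{p,\del} \leq -c_0 J_\del$ to pin down a sign change of $\rho + \ve H_{p,\del}$ inside $W_{s_0,\del_j}(p)$, and invoke the intermediate value theorem at scale $\del_j = 2^{-j}$. The only (harmless) deviation is that you anchor the IVT interval at $t=0$ and recover the lower bound $|\zeta_j|\gtrsim 2^{-j}$ from the root equation $\rho(\zeta_j) = -\ve H_{p,\del_j}(\zeta_j)$, whereas the paper instead exhibits an explicit positive left endpoint $x_2''\approx\frac{\ve c'}{1-\ve c'}\del_j$ for the IVT interval; you are also slightly more explicit about the smallness constraint on $\ve$ needed to keep the sign change inside $W_{s_0,\del_j}(p)$, which the paper leaves implicit.
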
 
\begin{proof}
Denote $\rho^\ve_\del(\zeta) = \rho(\zeta) + \ve H_{p,\del}(\zeta)$. 
Take $\zeta$ to be of the form $(x_1,y_1,x_2,y_2) = (0,0,x_2,0)$. Since $\na \rho(0) = \pp{\rho}{x_2} (0) = 1$, we have $\dist (\zeta, \Om_p) \approx x_2$ if $x_2$ sufficiently small. There exist some $\la_0,\la_1>0$ such that $\rho(\zeta) \in (\la_0 x_2, \la_1 x_2)$. By \cite[Prop.~4.1.]{Cat89}, there exist $0<c<C$ such that $-C J_\del(p,\zeta) < H_{p,\del}(\zeta) < -c J_\del(p,\zeta)$. We seek $\zeta'$ in the form $\zeta'= (0,0,x_2',0)$ such that $\rho(\zeta') + \ve H_{p,\del} (\zeta') >0$. Since 
\[
  H_{p,\del}(\zeta') > -CJ_\del(p,\zeta') 
  = -C(\del^2 + (x'_2)^2)^\yh \geq -C (\del + x'_2), 
\]  
it suffices to find $\zeta'$ with $\rho(\zeta') - \ve C (\del +x'_2 ) = x'_2 - \ve C (\del +x'_2) > 0$, or equivalently, 
\[
  x_2' > \frac{\ve C}{1-\ve C} \del. 
\]
On the other hand, we seek $\zeta''$ in the form $(0,0,x_2'',0)$ such that $\rho(\zeta'') + \ve H_{p,\del} (\zeta'') < 0$. Since 
\[ 
 H_{p,\del}(\zeta'') < -c J_\del(p,\zeta'') 
 = -c (\del^2 + (x_2'')^2)^\yh 
 < -c'(\del + x''_2), \quad c' = c / \sqrt{2},  
\]
it suffices to find $\zeta''$ with $\rho(\zeta'') - \ve c' (\del + x''_2 ) = x''_2 - \ve c' (\del +x''_2) < 0$, or equivalently, 
\[
  x_2'' < \frac{\ve c'}{1-\ve c'} \del.  
\]
To summarize, we choose $\zeta' = (0,0,x_2',0)$ and $\zeta''= (0,0,x_2'',0)$ such that 
\[
  |\zeta''| = x_2'' < \frac{\ve c'}{1-\ve c'} \del < \frac{\ve C}{1-\ve C} \del < x_2' = |\zeta'|. 
\]
For example, we can choose $x_2'' = \frac{1}{2} \frac{\ve c}{1-\ve c} \del$ and $x_2' = 2 \frac{\ve C}{1-\ve C} \del$.  
Since $\rho_\del^\ve(\zeta)$ is a continuous function in $\zeta$, there exists $a_\ast \in \bl \yh \frac{\ve c'}{1-\ve c'} \del, 2 \frac{\ve C}{1-\ve C} \del \br $ such that $\rho^\ve_\del \bl (0,0,a^\ast,0) \br = 0$. 
We now set $\del= \del_j= 2^{-j}$, and let $\zeta_j = (0,0,a_j^\ast,0)$ be such that $\rho^\ve_{\del_j} \bl (0,0,a_j^\ast,0) \br = 0$. In other words $\rho(\zeta_j) + \ve H_{p,\del_j}(\zeta_j) = 0$. 
Setting $c_0 = \yh \frac{\ve c'}{1-\ve c'} $ and $C_0 = \frac{\ve C}{1-\ve C} $, we then have $|\zeta_j| = a_j^\ast \in (c_0 2^{-j}, C_0 2^{-(j-1)})$. 
Since $\rho(\zeta_j) \approx \dist(\zeta_j,\Om_p) = |\zeta_j|$, by adjusting the constants $c_0,C_0$ we get $\rho(\zeta_j) \in (c_0 2^{-j}, C_0 2^{-(j-1)})$.   
\end{proof} 
\begin{prop} \label{Prop::r_lb}     
  Let $p \in U_0 \cap bD$ and let $\Om_p^\ast$ be the domain from \rp{Prop::polydisk}. There are positive constants $c'$ and $\gm$ which are independent of $p$ such that  
\[
  r(z) \geq c' |z-p|^m \quad \text{for} \quad |z-p| \leq \gm \quad \text{and} \quad z \notin \phi_p (\Om_p^\ast). 
\]
\end{prop}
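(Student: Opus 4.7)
The plan is to transfer the statement into the $\zeta$-coordinates via $\phi_p$, then exploit the explicit description of $\Om_p^\del$ in \re{Om_p_del} together with Catlin's bound $-H_{p,\del}(\zeta)\geq c_\ast J_\del(p,\zeta)$ from \cite[Prop.~4.1]{Cat89}.

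First I would reduce to an estimate on $\rho=\rho_p$. Since $\phi_p(0)=p$, $r(p)=0$, $r\circ\phi_p=\rho$, and the Jacobian of $\phi_p$ at $0$ is uniformly invertible for $p\in U_0$, there are constants $c_1,c_2,\gm_0>0$ independent of $p$ with $c_1|\zeta|\leq|\phi_p(\zeta)-p|\leq c_2|\zeta|$ for $|\zeta|\leq\gm_0$. So it suffices to find $c''$ and $\gm'$ independent of $p$ with $\rho(\zeta)\geq c''|\zeta|^m$ whenever $|\zeta|\leq\gm'$ and $\zeta\notin\Om_p^\ast$.

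Next I would establish the pointwise bound $\rho(\zeta)\geq c_0 J_\del(p,\zeta)$ for $\zeta\notin\Om_p^\del$, $|\zeta|<c$. By \re{Om_p_del}, $\zeta\notin\Om_p^\del$ forces $\rho(\zeta)\geq 0$; then either $\zeta\notin W_{s_0,\del}(p)$, in which case the definition of $W_{s_0,\del}$ directly gives $\rho(\zeta)\geq s_0 J_\del(p,\zeta)$, or $\zeta\in W_{s_0,\del}(p)$ with $\rho(\zeta)+\ve_0 H_{p,\del}(\zeta)\geq 0$, whence Catlin's bound gives $\rho(\zeta)\geq c_\ast\ve_0 J_\del(p,\zeta)$. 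Setting $c_0:=\min(s_0,c_\ast\ve_0)$, which is uniform in $p\in U_0\cap bD$ by Catlin's construction, yields the inequality.

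Finally I would pass to $\del\to 0$ and extract a $|\zeta|^m$ bound. Since $\Om_p^\ast=\mathrm{int}\bigcap_{\del}\Om_p^\del$, its complement equals $\overline{\bigcup_\del(\Om_p^\del)^c}$, so any $\zeta\notin\Om_p^\ast$ is the limit of points $\zeta_k\in(\Om_p^{\del_k})^c$. Because $J_\del(p,\cdot)\geq J_0(p,\cdot):=J_\del(p,\cdot)|_{\del=0}$ pointwise, the previous step gives $\rho(\zeta_k)\geq c_0 J_0(p,\zeta_k)$, and continuity of $\rho$ and $J_0(p,\cdot)$ promotes this to $\rho(\zeta)\geq c_0 J_0(p,\zeta)$ in the limit. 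By the discussion after the definition of $\tau(q,\del)$, shrinking $U_0$ I may fix $l\in\{2,\dots,m\}$ and $\eta>0$ with $A_l(p)\geq\eta$ for all $p\in U_0$. Then for $|\zeta|\leq\gm'\leq 1$,
\[
 J_0(p,\zeta)\geq \tfrac{1}{\sqrt{2}}\max\bigl(|\zeta_2|,\,\eta|\zeta_1|^l\bigr)\gtrsim \max\bigl(|\zeta_2|^m,\,|\zeta_1|^m\bigr)\gtrsim |\zeta|^m,
\]
where the middle inequality uses $|\zeta_j|\leq 1\Rightarrow|\zeta_j|\geq|\zeta_j|^m$ and $|\zeta_1|^l\geq|\zeta_1|^m$, and the last uses $(|\zeta_1|^2+|\zeta_2|^2)^{m/2}\leq 2^{m/2}\max(|\zeta_1|,|\zeta_2|)^m$. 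Unwinding the coordinate change gives $r(z)\geq c'|z-p|^m$ with $c',\gm$ independent of $p$. The delicate point is the limit argument: the bound in the previous step is $\del$-dependent and must survive for $\zeta$ on the boundary of $\bigcap_\del\Om_p^\del$; the monotonicity $J_\del\geq J_0$ of the majorant is what allows $\del$ to be discarded before the limit, reducing the matter to pointwise continuity of $\rho$ and $J_0(p,\cdot)$.
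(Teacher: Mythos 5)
Your proposal is correct and follows essentially the same route as the paper: pass to the $\zeta$-coordinates, use the explicit description \re{Om_p_del} of $\Om_p^\del$ together with Catlin's bound $-H_{p,\del}\approx J_\del$ to get $\rho(\zeta)\gtrsim J_\del(p,\zeta)$ off $\Om_p^\del$, and then bound $J\gtrsim|\zeta|^m$ using the uniform nonvanishing of some $A_l$ on $U_0$. In fact you are slightly more careful than the paper in two spots --- you treat the case $\zeta\notin W_{s_0,\del}(p)$ explicitly, and your limit argument with the monotonicity $J_\del\geq J_0$ correctly handles points of $\bl\bigcap_\del\Om_p^\del\br\sm\Om_p^\ast$, which the paper's phrase ``there exists $\del_0$ with $\zeta_0\notin\Om_p^{\del_0}$'' glosses over.
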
 
\begin{proof}
The proof is in Range \cite{Ran90} and we will provide the details here for the reader's convenience. 
It suffices to show that if $|\zeta_0|< \gm'$ for some $\gm'>0$ and $\zeta_0 \notin \Om_p^\ast$, then $\rho(\zeta_0) \geq c|\zeta|^m$. By definition of $\Om_p^\ast$, there exists $\del_0>0$ such that $\zeta_0 \notin \Om_p^{\del_0}$, where 
\[
 \Om_p^{\del_0} = \{ |\zeta|<c: \rho(\zeta)<0 \} \cup \{ \zeta \in W_{s,\del_0}(p): \rho^{\ve_0}_{\del_0}(\zeta)<0 \}.
\]
Hence $0 < \rho^{\ve_0}_{\del_0}(\zeta_0) := \rho(\zeta_0) + \ve_0 H_{p,\del_0} (\zeta_0)$, or $\rho(\zeta_0) > -\ve_0 H_{p,\del_0}(\zeta_0) \approx \ve_0 J_{\del_0}(p,\zeta_0)$. 
By \re{J_del_def}, we have $J_{\del_0}(p,\zeta_0) \gtrsim |\zeta_2| + |\zeta_1|^m \gtrsim |\zeta|^m$, and the proof is done.    
\end{proof}
Since a pseudoconvex domain $D \subset \C^2$ of finite type is regular in the sense of Diederich and Fornaess \cite{D-F77}, it follows that there is a pseudoconvex domain $\hht{D}$ with 
\[
  \ov D \subset \hht{D} \Subset \{ z: r(z) < c \gm^m \}. 
\] 
Let $\mu = \sup \{ r(z): z \in b\hht{D} \}$; then $0<\mu < c\gm^m$, and we can choose $\gm'>0$, such that $0<\mu< c (\gm')^m < c \gm^m$. 
Let $\Om_p^\del$ and $\Om_p^\ast$ be given by \re{Om_p_del} and \re{Om_p_ast}. 
For each $0<\del < \del_0$, we define the domain $D_\del$ by  
\begin{equation} \label{D_del_def} 
   D_{\del} := \blb \hht{D} \cap \{ z: |z-p|<\gm \} \cap \phi_{p} \bl \Om_{p}^{\del} \br \brb \cup 
  \{ \hht{D} \cap \{ z: |z - p | > \gm' \}.   
\end{equation}
We also define the domain $D_\ast$ by 
\begin{equation} \label{D_ast_def} 
  D_\ast := \blb \hht{D} \cap \{ z: |z-p|<\gm \} \cap \phi_{p} \bl \Om_{p}^\ast \br \brb \cup 
  \{ \hht{D} \cap \{ z: |z - p | > \gm' \}.    
\end{equation}
\begin{prop} \label{Prop::Dq_pscx}   
For each $0 < \del < \del_0$, the domain $D_\del$ is pseudoconvex. The same holds for $D_\ast$. 
\end{prop}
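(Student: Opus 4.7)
The plan is to verify pseudoconvexity of $D_\del$ by exhibiting, at every boundary point, a local continuous plurisubharmonic defining function; this local-to-global criterion characterizes pseudoconvexity. The identical argument with $\Om_p^\ast$ in place of $\Om_p^{\del}$ (recalling from the proof of \rp{Prop::polydisk} that $\Om_p^\ast$ is itself pseudoconvex) yields the result for $D_\ast$, so I focus on $D_\del$.

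The key preliminary I would establish is a matching claim: for every $z \in \ov{\hht{D}}$ with $\gm' \leq |z-p| \leq \gm$, one has $z \in \phi_p(\Om_p^\ast) \subset \phi_p(\Om_p^{\del})$. Indeed, were this to fail, \rp{Prop::r_lb} would yield $r(z) \geq c'|z-p|^m \geq c'(\gm')^m > \mu$, contradicting the bound $r \leq \mu$ on $\ov{\hht{D}}$ secured by the construction of $\hht{D}$ and the choice of $\mu$ and $\gm'$. From this matching one reads off a clean dichotomy: on $\hht{D} \cap \{|z-p| > \gm'\}$ one has $D_\del = \hht{D}$ (since the matching forces $\hht{D} \subset \phi_p(\Om_p^{\del})$ in the annulus $\gm' \leq |z-p| \leq \gm$), while on $\{|z-p| < \gm'\}$ one has $D_\del = \hht{D} \cap \phi_p(\Om_p^{\del})$; the two descriptions agree wherever they overlap. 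Consequently, neither of the auxiliary spheres $\{|z-p|=\gm'\}$, $\{|z-p|=\gm\}$ contributes a point to $bD_\del$.

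With this in hand, I would fix $q \in bD_\del$ and split into cases by location. If $|q-p| > \gm'$, then $q$ lies on $b\hht{D}$ only, and pseudoconvexity of $\hht{D}$ supplies a local plurisubharmonic defining function near $q$. If $|q-p| < \gm'$ and $q$ lies on exactly one of $b\hht{D}$ or $\phi_p(b\Om_p^{\del})$, then the local plurisubharmonic defining function of that one domain works (the other contains a full neighborhood of $q$). If $q$ lies on both $b\hht{D}$ and $\phi_p(b\Om_p^{\del})$, I would take the maximum of the two local plurisubharmonic defining functions, which is again plurisubharmonic and continues to define $D_\del$ locally by the intersection structure. The range $\gm' \leq |q-p| \leq \gm$ is precluded by the matching claim above.

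The main delicate point will be the matching claim itself, specifically the inequality $r \leq \mu$ on all of $\ov{\hht{D}}$ (and not merely on $b\hht{D}$), since this is what enables the contradiction via \rp{Prop::r_lb}. This is ultimately what constrains the choice of $\hht{D}$: it must be realized (via the Diederich-Fornaess regularity of $D$) as a sublevel set of a suitable plurisubharmonic modification of $r$, so that $r$ attains its supremum on $b\hht{D}$. Once the matching claim is secured, the gluing and the case-by-case construction of the local plurisubharmonic defining functions are routine, using throughout that the maximum of two plurisubharmonic functions is plurisubharmonic.
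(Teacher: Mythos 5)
Your proof is correct and follows essentially the same route as the paper's: both hinge on the contradiction via \rp{Prop::r_lb} together with the choice $\mu < c(\gm')^m$, which forces any boundary point of $D_\del$ in the annulus $\gm'\leq|z-p|\leq\gm$ to lie in the open set $\phi_p(\Om_p^\ast)\subset\phi_p(\Om_p^\del)$, so that the two pieces of $D_\del$ glue to $\hht D$ there; your ``matching claim'' is just this argument stated globally before the case split, rather than applied to each boundary point as in the paper. Two small remarks. First, the sentence that the range $\gm'\leq|q-p|\leq\gm$ ``is precluded'' is misleading and overlaps with your first case: boundary points of $D_\del$ can and do lie in that annulus (on $b\hht D$); the matching claim only guarantees that the auxiliary spheres create no corner points, i.e.\ every such $q$ is already a boundary point of $\hht D$ with $D_\del=\hht D$ locally. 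Second, you correctly flag that the argument needs $r\leq\mu$ on all of $\ov{\hht D}$, not merely on $b\hht D$, but there is no need to arrange $\hht D$ as a sublevel set so that $r$ is maximized on the boundary: simply take $\mu:=\sup_{\ov{\hht D}}r$, which is $<c\gm^m$ because $\ov{\hht D}$ is compact and contained in $\{r<c\gm^m\}$, and then choose $\gm'$ with $\mu<c(\gm')^m<c\gm^m$.
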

\begin{proof}
The proof is a slight modification of \cite[Lem.~3.15]{Ran90}. Denote
\[
  D_\del^1 := \hht{D} \cap \{ z: |z-p|<\gm \} \cap \phi_{p} \bl \Om_{p}^{\del} \br, \quad 
  D_\del^2 :=  \{ \hht{D} \cap \{ z: |z - p| > \gm' \}.  
\] 
We show that $D_\del$ is pseudoconvex at any boundary point $w \in b D_\del$. If $|w-p| < \gm'$ (resp. $|w-p| > \gm$), then $w \in bD^1_\del$ (resp. $bD^2_\del$). Assume now that $\gm' \leq |w-p| \leq \gm$. Suppose that $w \notin \phi_{p} (\Om_p^{\del})$, then in particular $w \notin \phi_p (\Om_{p}^\ast)$ and \rp{Prop::r_lb} shows that $r(w) \geq c|w - p|^m \geq c (\gm')^m$. On the other hand, we know that $w \in \ov{\hht{D}}$, so that $r(w) \leq \mu$, which is a contradiction as we had chosen $\mu < c(\gm')^m$. Hence we must have $w \in \phi_{p} (\Om_{p}^{\del})$. This forces $w \in b\hht{D}$, and thus $D_\del$ is pseudoconvex at $w$. The proof for $D_\ast$ is similar and we leave the details to the reader. 
\end{proof}
We denote by $P^{b_1,b_2}(\zeta')$ the polydisk \[
  P^{b_1,b_2}(\zeta') = \bigl\{ \zeta \in \C^2: 
|\zeta_1 - \zeta_1'| < b_1, |\zeta_2 - \zeta_2'| < b_2 \bigr\}.    
\]

\begin{prop} \label{Prop::h_der_Cauchy_est}  
  Let $h \in \Oc(\Om)$ and $\zeta' \in \Om$ be such that $P^{(b_1,b_2)}(\zeta') \subset \Om$. Then the following estimate holds
\begin{equation} \label{h_der_Cauchy_est} 
   |D^{\all} h(\zeta')| \leq \frac{C_\all} {b_1^{\all_1+1} b_2^{\all_2+1}} \| h \|_{L^2(P^{(b_1,b_2)}(\zeta'))} .     
\end{equation}
\end{prop}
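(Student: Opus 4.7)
The plan is to combine the classical Cauchy integral formula on a half-sized polydisk with the sub-mean-value property for plurisubharmonic functions to go from pointwise derivative bounds to $L^2$ bounds. Since $h\in\Oc(\Om)$, the only components of $D^\all h$ that do not vanish are the pure holomorphic derivatives $\pa_{z_1}^{\all_1}\pa_{z_2}^{\all_2}h$, so it suffices to estimate these.

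First, I would apply Cauchy's integral formula on the distinguished boundary of the polydisk $P^{(b_1/2,b_2/2)}(\zeta')\subset P^{(b_1,b_2)}(\zeta')\subset\Om$ to obtain
\[
|D^\all h(\zeta')|\leq \frac{2^{\all_1+\all_2}\all_1!\,\all_2!}{b_1^{\all_1}b_2^{\all_2}}\sup_{\eta\in \ov{P^{(b_1/2,b_2/2)}(\zeta')}}|h(\eta)|.
\]
Next, for any $\eta\in P^{(b_1/2,b_2/2)}(\zeta')$, the polydisk $P^{(b_1/2,b_2/2)}(\eta)$ is contained in $P^{(b_1,b_2)}(\zeta')$. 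Since $|h|^2$ is plurisubharmonic, the sub-mean-value inequality gives
\[
|h(\eta)|^2\leq \frac{1}{\vol(P^{(b_1/2,b_2/2)}(\eta))}\int_{P^{(b_1/2,b_2/2)}(\eta)}|h|^2\,dV\leq \frac{16}{\pi^2 b_1^2 b_2^2}\|h\|_{L^2(P^{(b_1,b_2)}(\zeta'))}^2.
\]
Chaining the two inequalities produces the desired bound with an explicit $C_\all$ proportional to $2^{\all_1+\all_2}\all_1!\,\all_2!$.

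The argument is essentially routine and I do not anticipate any substantive obstacle; the only mild subtlety is the choice of the intermediate radius $b_j/2$, which ensures that both Cauchy's formula (requiring a positive distance to the boundary of the inner polydisk) and the sub-mean-value step (requiring the shifted polydisk to remain inside $P^{(b_1,b_2)}(\zeta')$) can be applied simultaneously. Any fixed ratio strictly between $0$ and $1$ would work and would only affect the constant $C_\all$.
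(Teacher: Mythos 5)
Your proof is correct. It differs from the paper's argument in how the passage from a pointwise derivative bound to the $L^2$ norm is made: you first apply the Cauchy estimate on the half-sized polydisk $P^{(b_1/2,b_2/2)}(\zeta')$ to control $|D^\all h(\zeta')|$ by $\sup|h|$ there, and then convert that sup into the $L^2(P^{(b_1,b_2)}(\zeta'))$ norm via the sub-mean-value inequality for the plurisubharmonic function $|h|^2$ on shifted polydisks $P^{(b_1/2,b_2/2)}(\eta)\subset P^{(b_1,b_2)}(\zeta')$. The paper instead works in one pass: it differentiates the Cauchy formula on the distinguished boundary of a polydisk of variable polyradius $(r_1,r_2)$, multiplies by $r_1^{\all_1+1}r_2^{\all_2+1}$, integrates $r_j$ from $0$ to $b_j$, and then applies the Cauchy--Schwarz inequality to the resulting volume integral, so that the $L^2$ norm appears directly without invoking plurisubharmonicity. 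Both routes are elementary and yield the same powers $b_1^{-\all_1-1}b_2^{-\all_2-1}$; yours has the mild advantage of explicit constants and a cleaner conceptual split (Cauchy estimate plus Bergman-type inequality), while the paper's radial-averaging argument avoids the auxiliary shifted polydisks and the sub-mean-value step. Your geometric checks (that $\ov{P^{(b_1/2,b_2/2)}(\zeta')}\subset\Om$ so Cauchy's formula applies, and that $P^{(b_1/2,b_2/2)}(\eta)\subset P^{(b_1,b_2)}(\zeta')$ for $\eta$ in the half-polydisk) are exactly the points that need verifying, and they hold as you state; the observation that only the pure holomorphic derivatives of $h$ are nonzero is also the right reading of the operator $D^\all$ here.
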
 
\begin{proof}
Let $0<r_1 < b_1$ and $0 < r_2 < b_2$. Denote $P^{(b_1,b_2)} := \{ \zeta \in \C^2: |\zeta_1| < r_1, |\zeta_2| < r_2 \}$.    
By the Cauchy integral formula, 
\[
 h(z) = \frac{1}{(2\pi i)^2} \int_{b P^{(b_1,b_2)}(z)} \frac{h(\zeta)}{(\zeta_1-z_1)(\zeta_2-z_2)} \, d\zeta_1 \, d \zeta_2.  
\]
Taking derivatives and writing in polar coordinates $\zeta_j =z_j + r_j e^{i \thh_j}$ with $\thh_j \in [0,2\pi]$, we get  
\begin{align*}
   D^\all h(z) = \frac{\all ! i^2}{(2 \pi i)^2} \int_{[0,2\pi]^2} \frac{h(\zeta)(r_1 e^{i\thh_1}) (r_2 e^{i \thh_2})}{(\zeta_1-z_1)^{\all_1+1} (\zeta_2-z_2)^{\all_2+2}  } \, d \thh_1 \, d \thh_2. 
\end{align*}
Multiply $r_1^{\all_1+1}r_2^{\all_2+1}$ on both sides: 
\[
 |D^{\all} h(z)| r_1^{\all_1+1} r_2^{\all_2+1} \leq \frac{\all !}{(2 \pi)^2} \int_{[0,2\pi]^2} |h(\zeta)| r_1 r_2 \, d \thh_1 \, d \thh_2 
\]
Now integrate $r_j$ from $0$ to $b_j$ and apply H\"older's inequality to get
\begin{align*}
   \frac{b_1^{\all_1+2}}{\all_1+2} \frac{b_2^{\all_1+2}}{\all_2+2} |D^{\all} h(z)| 
   &\leq \frac{\all !}{(2 \pi)^2} \int_{r_1=0}^{b_1} \int_{r_2=0}^{b_2} \int_{\thh_1=0}^{2 \pi}  \int_{\thh_1=0}^{2 \pi} |h(\zeta)| r_1 r_2 \, dr_1 \, dr_2 \, d \thh_1 \, d \thh_2 
\\ &\leq \frac{\all !}{(2 \pi)^2} \Bigl( \, 
\int\limits_{\substack{0 \leq r_j \leq b_j \\ 0 \leq 
\thh_j \leq 2 \pi}}  |h(\zeta)|^2 (r_1 r_2)^2 \, dr_1 \, dr_2 \, d \thh_1 \, d \thh_2 \Bigr)^\yh (b_1 b_2)^\yh (2 \pi)
\\ &\leq \frac{\all !}{2 \pi} b_1 b_2 
\left( \int_{P^{(b_1,b_2)}(\zeta')} |h(\zeta)|^2 \, dV(\zeta) \right)^\yh, 
\end{align*}
which proves \re{h_der_Cauchy_est}.  
\end{proof}  
Let $\ti \zeta \notin \Om_p^{\del}$ and $|\ti \zeta| < C \del$. Denote $d_{\del}(z) := \dist(z,b\Om_p^{\del})$. Suppose $h \in \Oc( \Om_p^{\del})$ satisfies 
\begin{equation} \label{M_eta_h}  
 \blb M^\eta_{\ti \zeta, \del} (h) \brb^2 := \int_{\Om^{\del}_p} \frac{|h(\zeta)|^2 (d_{\del})^{2\eta} (\zeta)}{|\zeta- \ti \zeta|^2}  \, dV(\zeta) < \infty.     
\end{equation} 

Let $\zeta' \in \Om_p \cap \{|\zeta| < c \}$, and let $c, a_0$ be the constants from \rp{Prop::polydisk}.  Denote 
\begin{equation} \label{beta_12}  
\begin{gathered} 
 \beta_2 (\zeta') := (a_0/2) \bl J_{\del}(p,\zeta') + \del \br \approx a_0 J_{\del}(p,\zeta'), 
 \\ 
  \beta_1(\zeta') := \tau \bl p, (a_0/2) J_{\del}(p,\zeta') \br + (a_0/2) \del 
  \approx 
  \tau \bl p, a_0 J_{\del}(p,\zeta') \br. 
\end{gathered} 
\end{equation} 

We now apply \rp{Prop::h_der_Cauchy_est} to $P^{(\beta_1,\beta_2)} (\zeta') = P^{(a_0/2)}_{\del,\del} (\zeta') \subset \Om_p^{\del}$, and we obtain the following: 
\begin{equation} \label{h_Cauchy_est}  
 |D^\all h(\zeta')| \leq C_\all \frac{1}{\beta_1^{\all_1+1}} \frac{1}{\beta_2^{\all_2 +1}} \left[\int_{P^{(\beta_1,\beta_2)}(\zeta')} |h(\zeta)|^2 \, dV(\zeta) \right]^{1/2} .  
\end{equation} 
We estimate 
\begin{equation} \label{h_L2_polydisk_parts}   
\begin{aligned}
\int_{P^{(\beta_1,\beta_2)}(\zeta')} |h(\zeta)|^2 \, dV(\zeta) 
&= \int_{P^{(\beta_1,\beta_2)}(\zeta')} \frac{|h(\zeta)|^2 (d_{\del})^{2\eta} (\zeta)}{|\zeta- \ti \zeta|^2} \frac{|\zeta- \ti \zeta|^2}{(d_{\del})^{2 \eta} (\zeta)}  \, dV(\zeta) 
\\ &\quad < \blb M^\eta_{\ti \zeta, \del} (h) \brb^2
\sup_{\zeta \in P^{(\beta_1,\beta_2)}(\zeta')} \frac{|\zeta- \ti \zeta|^2}{(d_{\del})^{2 \eta} (\zeta)}. 
\end{aligned}
\end{equation}

To proceed, we need to estimate $|\zeta-\ti \zeta|$ from above and $d_{\del}(\zeta)$ from below, for $\zeta \in P^{(\beta_1,\beta_2)}(\zeta') $. 
We have 
\begin{equation} \label{zeta_zeta'_ub}  
\begin{aligned} 
  |\zeta- \ti \zeta| 
&\leq |\zeta-\zeta'| + |\zeta'| + |\ti \zeta| \\ &\leq \beta_1(\zeta') + \beta_2(\zeta') + |\zeta'| + C \del
\\ &\leq C_\rho \tau \bl p, J_{\del}(p,\zeta') \br + |\zeta'|,   
\end{aligned}  
\end{equation}
where we used the assumption $|\ti \zeta| < C \del$. (In fact, we only need $|\ti \zeta| < C \del^\yh$ since $\tau \bl p, J_{\del}(p,\zeta') \br \geq c [J_\del(p,\zeta')]^\yh \geq c\del^{\yh}$.)     
Note that that the constant $C_\rho$ depends on $a_0$. 
By \rp{Prop::polydisk}, we have 
\[
  P^{(a_0)}_{\del,\del} (\zeta') = \left\{ \zeta \in \C^2: |\zeta_2 - \zeta'_2| < a_0 (J_{\del} (p,\zeta') + \del), \; |\zeta_1 - \zeta_1'| < \tau \bl p,a_0 J_{\del}(p,\zeta') \br + a_0 \del \right\} \subset \Om_p^{\del}.  
\]
Thus for $w \in b\Om_p^{\del}$, we must either have 
\\ (i) 
$|w_2- \zeta'_2| \geq a_0 \bl J_{\del}(p,\zeta') + \del \br = 2 \beta_2$;    
\\ or \\ 
(ii) $|w_1-\zeta'_1| \geq \tau \bl p,a_0 J_{\del}(p,\zeta') \br + a_0 \del  $. 
\\[5pt] If $|w_2- \zeta'_2| \geq 2 \beta_2$, then  
\begin{equation} \label{w2-zeta2_lb}  
  |w_2 - \zeta_2| \geq |w_2 - \zeta_2'| - |\zeta_2' - \zeta_2| 
 \geq 2 \beta_2 - \beta_2 = \beta_2 \approx a_0 J_{\del}(p,\zeta').     
\end{equation}
If  $|w_1 - \zeta_1'| \geq \tau \bl p,a_0 J_{\del}(p,\zeta') \br + a_0 \del $, then by using \re{tau_scaling},  
\begin{equation} \label{w1-zeta1_lb} 
\begin{aligned}
    |w_1 - \zeta_1| 
  &\geq |w_1 - \zeta_1'| - |\zeta_1' - \zeta_1| \\ &\geq \tau \bl p,a_0 J_{\del}(p,\zeta') \br + a_0 \del - \blb \tau \bl p, \frac{a_0}{2} J_{\del}(p,\zeta') \br + \frac{a_0}{2} \del \brb 
  \\&\geq (2^{1/m}-1) \tau \Bl p, \frac{a_0}{2} J_{\del}(p,\zeta') \Br  + \frac{a_0}{2} \del 
   \\ &\geq C_\rho \blb \frac{a_0}{2} J_{\del}(p,\zeta') \brb^{1/2} \geq C'_\rho J_{\del}(p,\zeta'). 
\end{aligned}
\end{equation}
Together, \re{w2-zeta2_lb} and \re{w1-zeta1_lb} imply that $|w-\zeta| \geq C_\rho J_{\del}(p,\zeta')$, which means 
\begin{equation} \label{d_ti_del_lb} 
   d_{\del}(\zeta) \geq C_\rho J_{\del}(p,\zeta') \quad \text{for} \quad \zeta \in P^{(a_0/2)}_{\del,\del}(\zeta'). 
\end{equation} 
Using \re{zeta_zeta'_ub} and \re{d_ti_del_lb} in \re{h_L2_polydisk_parts}, we get 
\[
  \int_{P^{(a_0/2)}_{\del,\del}(\zeta')} |h(\zeta)|^2 \, dV(\zeta) \leq C_\rho \blb M^\eta_{\ti \zeta, \del} (h) \brb^2 \frac{\blb \tau(p,J_{\del}(p,\zeta') + |\zeta'| \brb^2}{ \blb J_{\del}(p,\zeta') \brb^{2 \eta} }.  
\]
Using the above estimate and \re{beta_12} in \re{h_Cauchy_est}, and denoting $\tau = \tau(p, J_{\del}(p,\zeta')) $, we get 
\begin{equation} \label{h_der_est_tau_beta}   
\begin{aligned}
  |D^\all h(\zeta')| 
 &\leq C_{\rho,\all} \frac{1}{\tau^{\all_1+1}} \frac{1}{\blb J_{\del}(p,\zeta') \brb^{\all_2+1}}  \frac{\tau + |\zeta'|}{\blb J_{\del}(p,\zeta') \brb^\eta } M^\eta_{\ti \zeta, \del} (h) 
 \\ &\leq C_{\rho,\all}   
  \left( \frac{1}{\tau^{\all_1} [J_{\del}(p,\zeta')]^{\all_2+1+\eta} } + \frac{|\zeta'|}{\tau^{\all_1+1} [J_{\del}(p,\zeta')]^{\all_2+1+\eta}} \right) M^\eta_{\ti \zeta, \del} (h) .  
\end{aligned} 
\end{equation}
By the Taylor expansion of $\rho$ (\re{rho_Taylor_exp}), we have
\begin{equation} \label{beta_tau_lb}  
J_{\del}(p,\zeta') \geq c_\rho \bl \del + |\rho(\zeta')| + |(\zeta')_2| + |\zeta'|^m \br,  
\end{equation} 
which then implies 
\[
 \tau = \tau \bl p, J_{\del}(p,\zeta') \br 
 \geq c_\rho \blb J_{\del}(p,\zeta') \brb^{1/2} 
\geq c_\rho J_{\del}(p,\zeta') 
\geq c'_\rho \bl \del + |\rho(\zeta')| + |(\zeta')_2| \br.      
\]
On the other hand, by definition we have for some $2 \leq l_0 \leq m$, 
\[
  \tau\bl p, J_{\del}(p,\zeta') \br 
  = \left( \frac{J_{\del}(p,\zeta') }{A_{l_0}(p)} \right)^{1/l_0} \geq \left( \frac{A_{l_0}(p)|(\zeta')_1|^{l_0}}{A_{l_0}(p)} \right)^{1/l_0}
  = |(\zeta')_1|. 
\]
Hence 
\begin{equation}
  \tau\bl p, a_0 J_{\del}(p,\zeta') \br  
\geq c_\rho \blb \del + |\rho(\zeta')| + (\zeta')_1 + (\zeta')_2 \brb 
\geq c_\rho \bl \del + |\rho(\zeta')| + |\zeta'| \br.    
\end{equation}

Combining \re{h_der_est_tau_beta} with 
\re{beta_tau_lb}, we obtain the following 
\begin{lemma}
 Let $\ti \zeta \notin \Om_p^\del$ and $|\ti \zeta| < C \del$. Suppose $h \in \Oc(\Om_p^{\del})$ and $M^\eta_{\ti \zeta, \del} (h) < \infty$ for some $\eta >0$, where $M^\eta_{\ti \zeta, \del} (h)$ is given by \re{M_eta_h}. Then for $\zeta' \in \Om_p = \phi_p^{-1}(D)$ with $|\zeta'|< c$ the following estimate holds: 
\begin{equation}
  |D^\all h(\zeta')| 
\leq C_{\rho,\all} \frac{M^\eta_{\ti \zeta,\del}(h)   }{ \bl \del + |\rho(\zeta')| + |\zeta'| \br^{\all_1} 
\bl \del + |\rho(\zeta')| + |\zeta'_2| + |\zeta'_1|^m \br^{\all_2+1+\eta}}. 
\end{equation}
\end{lemma}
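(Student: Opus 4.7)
The plan is to assemble the lemma directly from the three pieces of information that are already on the page immediately preceding the statement, so the work is essentially bookkeeping rather than new estimation. Write $\tau := \tau(p, J_\del(p,\zeta'))$ and $J := J_\del(p,\zeta')$. The starting point is inequality \re{h_der_est_tau_beta}, which the text has just derived by applying the Cauchy estimate \rp{Prop::h_der_Cauchy_est} on the polydisk $P^{(a_0/2)}_{\del,\del}(\zeta') \subset \Om_p^\del$ (valid by \rp{Prop::polydisk}, using the hypothesis $|\zeta'|<c$), combined with the upper bound \re{zeta_zeta'_ub} for $|\zeta-\ti\zeta|$ (which is where the hypothesis $|\ti \zeta|<C\del$ enters) and the lower bound \re{d_ti_del_lb} for $d_\del(\zeta)$. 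It reads
\[
|D^\all h(\zeta')| \leq C_{\rho,\all} \left( \frac{1}{\tau^{\all_1} J^{\all_2+1+\eta}} + \frac{|\zeta'|}{\tau^{\all_1+1} J^{\all_2+1+\eta}} \right) M^\eta_{\ti \zeta, \del}(h).
\]

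Next I would collapse the two-term bound into a single term by showing $|\zeta'| \lesssim \tau$. This is immediate from the lower bound $\tau \geq c_\rho (\del + |\rho(\zeta')| + |\zeta'|)$ displayed right after \re{beta_tau_lb}, since it gives in particular $\tau \geq c_\rho |\zeta'|$, hence $|\zeta'|/\tau^{\all_1+1} \leq C_\rho/\tau^{\all_1}$. Absorbing the second summand then produces
\[
|D^\all h(\zeta')| \leq C'_{\rho,\all} \frac{M^\eta_{\ti \zeta, \del}(h)}{\tau^{\all_1} J^{\all_2+1+\eta}}.
\]

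Finally, substitute the two lower bounds already proved, namely
\[
\tau \geq c_\rho \bl \del + |\rho(\zeta')| + |\zeta'| \br, \qquad J \geq c_\rho \bl \del + |\rho(\zeta')| + |\zeta'_2| + |\zeta'_1|^m \br,
\]
(the first of these is the displayed inequality immediately after \re{beta_tau_lb}, the second is \re{beta_tau_lb} itself) into the denominator to arrive at the stated bound. Since each ingredient is in place, there is no real obstacle in the proof; the lemma is best viewed as a packaging of the preceding string of estimates into the precise form that will be invoked later to control derivatives of the Leray map. The only minor point worth flagging is the passage $|\zeta'|\lesssim \tau$, but as noted this is a transparent consequence of the $\tau$-lower bound.
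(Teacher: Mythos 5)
Your proof is correct and tracks what the paper does: the paper actually presents this lemma without a separate proof, stating only ``Combining \re{h_der_est_tau_beta} with \re{beta_tau_lb}, we obtain the following,'' and your write-up fills in exactly the right intermediate steps (collapse the two-term Cauchy estimate using $|\zeta'|\lesssim\tau$, then substitute the lower bounds for $\tau$ and $J_\del(p,\zeta')$). One tiny inaccuracy: you cite \re{beta_tau_lb} as giving $J_\del(p,\zeta')\ge c_\rho(\del+|\rho(\zeta')|+|\zeta'_2|+|\zeta'_1|^m)$, but the display actually has $|\zeta'|^m$ rather than $|\zeta'_1|^m$; since $|\zeta'_1|^m\le|\zeta'|^m$ the inequality you need still follows immediately, so the argument is unaffected.
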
  
We now pull back the estimates onto the original domain via the biholomorphic map $\phi_p$. Recall that $D_\del$ given by \re{D_del_def}, and is locally $\phi_p(\Om_p)$. 
For $q \notin D_\del$, we denote 
\[
  I^{\eta}_{q,\del} [h] := \left[ \int_{D_\del} \frac{|h(z)|^2}{|z - q|^2} \dist_{b D_\del}^{2\eta} (z) dV(z) \right]^\yh, \quad L_i = d \phi_p \left( \pp{}{\zeta_i} \right), \quad i = 1,2.  
\]
The following result is an immediate consequence of the lemma above.   
\begin{lemma} \label{Lem::h_est_D_del} 
 Let $q \notin D_\del$ and suppose that $|q - p| < C \del$. Let $h \in \Oc(D_\del)$ with $I^\eta_{q, \del} [h] < \infty$ for some $\eta >0$. Then for $z \in D$ with $|z-p| < c$ the following estimate holds: 
\begin{equation*}  
   |L_1^{\all_1} L_2^{\all_2} h(z)| \leq C_{r,\all} \frac{I^\eta_{q, \del} (h) }{\blb \del + |r(z)| + |z-p| \brb^{\all_1} \blb \del + |r(z)| + |g(p,z)| + |z-p|^m \brb^{\all_2+1+\eta} },      
\end{equation*} 
where $g(p,z):= (\phi_p^{-1})^{(2)}(z)$.  
\end{lemma}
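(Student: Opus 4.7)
The plan is to deduce the stated bound by applying the preceding lemma to the pullback $\ti h := h \circ \phi_p$ with $\zeta' := \phi_p^{-1}(z)$ and $\ti \zeta := \phi_p^{-1}(q)$, then translating back to the original coordinates. Since $\phi_p$ is a holomorphic diffeomorphism sending $0$ to $p$, the hypotheses transfer cleanly: $\ti h \in \Oc(\Om_p^{\del})$, $\ti \zeta \notin \Om_p^{\del}$, $|\ti \zeta| \lesssim |q - p| < C \del$, and (after possibly shrinking $c$) $\zeta' \in \Om_p$ with $|\zeta'| < c$. The definition $L_i = d\phi_p(\pa/\pa \zeta_i)$, together with the identity $D\phi_p^{-1} \cdot D\phi_p = I$, yields $L_1^{\all_1} L_2^{\all_2} h(z) = \pa_{\zeta_1}^{\all_1} \pa_{\zeta_2}^{\all_2} \ti h(\zeta')$, which is exactly the derivative bounded by the previous lemma.

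Next I would verify that the weighted $L^2$ quantity transports correctly. Because $\phi_p$ is a polynomial biholomorphism with uniformly non-degenerate complex Jacobian on a neighborhood of $0$, one has $dV(\zeta) \approx dV(z)$; because $\phi_p$ is bi-Lipschitz near $0$, one has $|\zeta - \ti \zeta| \approx |z - q|$; and because $\phi_p$ carries $b\Om_p^{\del}$ to $bD_\del$ locally (with the global parts of $D_\del$ away from $p$ contributing only harmlessly since $|z-q|$ is then bounded below), one has $d_\del(\zeta) \approx \dist(z, bD_\del)$. A direct change of variables in the integral defining $M^\eta_{\ti \zeta, \del}(\ti h)$ therefore yields $M^\eta_{\ti \zeta, \del}(\ti h) \lesssim I^\eta_{q,\del}[h]$.

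Finally I would compare the denominator factors. Since $p \in bD$, $\rho(\zeta') = r(\phi_p(\zeta')) - r(p) = r(z)$, so $|\rho(\zeta')| = |r(z)|$; the diffeomorphism property gives $|\zeta'| \approx |z-p|$, handling the first factor. For the second factor, $\zeta_2' = (\phi_p^{-1})^{(2)}(z) = g(p,z)$ by definition, and $\zeta_1' = z_1 - p_1$ because each $\Phi^{(l)}$ with $l \geq 2$ preserves the first coordinate. The one point to verify is that $|\zeta_1'|^m = |z_1 - p_1|^m$ can be upgraded to $|z-p|^m$ inside the denominator; this follows from the explicit form of $\zeta_2'$, which contains a nontrivial linear term in $z_2 - p_2$ plus polynomial corrections in $z_1 - p_1$, giving $|z_2 - p_2| \lesssim |g(p,z)| + |z_1 - p_1|$ and hence (using $|z-p|<c<1$) $|z-p|^m \lesssim |g(p,z)| + |z_1 - p_1|^m$. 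Combining,
\[
  \del + |\rho(\zeta')| + |\zeta_2'| + |\zeta_1'|^m \gtrsim \del + |r(z)| + |g(p,z)| + |z-p|^m.
\]

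The main (modest) obstacle is this last comparison between $|\zeta_1'|^m$ and $|z-p|^m$, where one must exploit the fact that a small $|z_2 - p_2|$ is already controlled by $|g(p,z)|$ plus lower-order polynomial contributions in $z_1 - p_1$; every other ingredient is a routine consequence of $\phi_p$ being a biholomorphism with bounded Jacobian and having the specific triangular structure inherited from the Catlin construction. Substituting these equivalences into the bound from the preceding lemma yields the claim verbatim.
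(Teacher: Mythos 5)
Your proposal is correct and follows exactly the route the paper intends: the paper states this lemma as an immediate consequence of the preceding $\zeta$-coordinate lemma via the pullback by $\phi_p$, and you simply fill in the routine details (transport of $L_1^{\all_1}L_2^{\all_2}$ to $\pa_{\zeta}^{\all}$, the change of variables showing $M^\eta_{\ti\zeta,\del}(h\circ\phi_p)\lesssim I^\eta_{q,\del}[h]$, and the identifications $\rho(\zeta')=r(z)$, $\zeta_2'=g(p,z)$, $\zeta_1'=z_1-p_1$ with $|z-p|^m\lesssim |g(p,z)|+|z_1-p_1|^m$). No gaps; the denominator comparison you single out is indeed the only point needing an argument, and your argument for it is sound.
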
 

The following result is known as the Skoda's division  theorem, which we shall use to construct holomorphic support functions. 
\begin{prop}{\cite[Thm 1]{Sko72}}  \label{Prop::Skoda}    
  Let $\Om$ be an pseudoconvex domain in $\C^n$, and $\psi$ be a plurisubharmonic function in $\Om$. Let $\Psi_1,\Psi_2, \dots, \Psi_m$ be a system of $m$ functions holomorphic in $\Om$. Let $\all>1$ and $\mu = \min \{n,m-1 \}$. Let $f$ be a holomorphic function in $\Om$ such that 
  \[
    \int_\Om |f|^2 |\Psi|^{-2\all \mu -2} e^{-\psi} \, d V(z) < \infty. 
  \]
  Then there exist $h_1,h_2,\dots, h_m$ holomorphic in $\Om$ such that $f = \sum_{i=1}^m \Psi_i h_i$ and 
\[
  \int_\Om |h|^2 |\Psi|^{-2 \all \mu} e^{-\psi} \, d V(z) 
  \leq \frac{\all}{\all-1} \int_\Om |f|^2 |\Psi|^{-2\all \mu -2} e^{-\psi} \, d V(z). 
\]
\end{prop}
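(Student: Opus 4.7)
The plan is to follow Skoda's original proof, which is based on Hörmander's $L^2$-method for $\overline\partial$ combined with a careful algebraic identity involving the section $\Psi = (\Psi_1,\dots,\Psi_m)$. The starting point is the observation that the function $f$ admits a naive, \emph{non-holomorphic} representation in the form
\[
   h_i^0 (z) := \frac{\ov{\Psi_i(z)} f(z)}{|\Psi(z)|^2}, \qquad \sum_{i=1}^m \Psi_i h_i^0 = f.
\]
Thus, if one can find sections $v = (v_1,\dots,v_m)$ which are \emph{pointwise orthogonal} to $\Psi$ (i.e.\ $\sum \Psi_i v_i \equiv 0$) and which satisfy $\overline\partial v = \overline\partial h^0$ in a suitable weighted $L^2$ sense, then $h := h^0 - v$ is holomorphic and still satisfies $\sum \Psi_i h_i = f$. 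This reduces the theorem to a $\overline\partial$-problem with a constraint.

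The second step is to introduce the weights
\[
  \phi_1 := \all \mu \log |\Psi|^2 + \psi, \qquad \phi_2 := (\all\mu + 1) \log|\Psi|^2 + \psi,
\]
both plurisubharmonic on $\Om \sm \{\Psi = 0\}$ since $\log|\Psi|^2$ is plurisubharmonic. I would set up Hörmander's $L^2$-estimate for $\overline\partial$ on $(0,1)$-forms with values in the trivial bundle, with weight $\phi_1$ on the target and $\phi_2$ on the source, and then restrict the problem to the subbundle of $\C^m$ orthogonal to $\Psi$. The orthogonality constraint $\sum \Psi_i v_i = 0$ plays the essential role: on this subbundle, the curvature of the metric determined by $\phi_1,\phi_2$ acquires an \emph{extra positive term} coming from $i \pa \db \log|\Psi|^2$ applied only to the orthogonal directions, and a direct computation (the Skoda algebraic identity, proved from the elementary inequality for Hermitian matrices $\sum \la_j^{-1} \leq \frac{\mu}{\min \la_j}$ when $\mu = \min\{n,m-1\}$) yields
\[
  \left\langle \blb i \pa \db \phi_1, \La \brb v, v \right\rangle
  \geq \frac{\all-1}{\all} \cdot \frac{|\langle \overline\partial h^0, v \rangle|^2}{|f|^2 / |\Psi|^2}
\]
for all $v$ orthogonal to $\Psi$. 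The constant $\frac{\all-1}{\all}$ is precisely where the factor $\frac{\all}{\all-1}$ in the conclusion originates, and the range $\all > 1$ is exactly what makes this positive.

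Once this curvature estimate is in hand, the standard Hörmander existence theorem (applied to the dual formulation, solving $\overline\partial v = \overline\partial h^0$ in the orthogonal subbundle) produces $v$ satisfying
\[
  \int_\Om |v|^2 |\Psi|^{-2\all \mu} e^{-\psi} dV
  \leq \frac{\all}{\all - 1} \int_\Om \frac{|\overline\partial h^0|^2_{(\phi_2, \om_{\mathrm{Eucl}})}}{\cdots}\, dV
  \leq \frac{\all}{\all - 1} \int_\Om |f|^2 |\Psi|^{-2\all \mu -2} e^{-\psi} dV,
\]
where the last inequality uses the explicit computation $\overline\partial h_i^0 = \ov{\Psi_i} \,\overline\partial (f / |\Psi|^2) + (f/|\Psi|^2) \overline\partial \ov{\Psi_i}$ together with the orthogonality constraint to eliminate all the terms except the one whose norm squared equals $|f|^2 / |\Psi|^2$. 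Setting $h = h^0 - v$ gives the required decomposition, and the same bound controls $\|h\|$ since $\|h^0\|^2 \le \int |f|^2 |\Psi|^{-2} e^{-\psi} dV$ which is absorbed in the right side.

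The main obstacle is the algebraic/curvature estimate in the second step: one must verify, for sections orthogonal to $\Psi$, a pointwise lower bound of the commutator $[i\pa\db \phi_1,\La]$ with exactly the sharp constant $(\all-1)/\all$. This requires Skoda's sharp Hermitian inequality with the combinatorial factor $\mu = \min(n, m-1)$, and this is the only place where the dimension $n$ enters. To handle the singularity of the weights along $\{\Psi = 0\}$, I would first regularize by replacing $|\Psi|^2$ with $|\Psi|^2 + \ve$, carry out the argument for each $\ve>0$ to produce holomorphic $h^{(\ve)}$ with uniform bound, and then extract a weak $L^2$-limit, whose holomorphicity is preserved. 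The division identity $f = \sum \Psi_i h_i^{(\ve)}$ passes to the limit, completing the proof.
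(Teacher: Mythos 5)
The paper does not prove this proposition; it is a direct citation of Skoda's division theorem (\cite[Thm 1]{Sko72}), used as a black box in \rp{Prop::Skoda_D_ast} and nowhere re-derived. Your sketch correctly reconstructs the architecture of Skoda's original $L^2$ argument: start from the non-holomorphic representation $h^0_i = \ov{\Psi_i} f / |\Psi|^2$, reduce to solving $\db v = \db h^0$ in the subbundle of $\C^m$ pointwise orthogonal to $\Psi$ (so that $h=h^0-v$ is holomorphic and still divides $f$), exploit the extra positivity of $i\pa\db \log|\Psi|^2$ on that subbundle together with Skoda's Hermitian-matrix lemma (this is precisely where $\mu = \min\{n,m-1\}$ enters), close with H\"ormander's $\db$ existence theorem in the weighted spaces determined by $\phi_1,\phi_2$, and regularize $|\Psi|^2$ to $|\Psi|^2 + \ve$ to handle the singularity of the weight along $\{\Psi = 0\}$. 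This is the correct framework and the one Skoda himself uses.

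There is a bookkeeping slip worth flagging because the sharp constant is the content of the statement. Since $h^0$ is pointwise parallel to $\ov{\Psi}$ while $v$ is annihilated by $\Psi$, one has $|h|^2 = |h^0|^2 + |v|^2$ pointwise, and a direct computation gives
\[
\int_\Om |h^0|^2 |\Psi|^{-2\all\mu} e^{-\psi}\,dV = \int_\Om |f|^2 |\Psi|^{-2\all\mu - 2} e^{-\psi}\,dV
\]
(your final sentence has the exponent $-2$ on $|\Psi|$, dropping the $-2\all\mu$; that should be corrected). Hence the $h^0$ term already equals the right-hand side with constant $1$, and to reach the stated total $\frac{\all}{\all-1}$ the H\"ormander estimate for $v$ must produce the constant $\frac{1}{\all-1}$, not $\frac{\all}{\all-1}$; equivalently, the curvature lower bound for sections orthogonal to $\Psi$ should carry the factor $\all-1$, not $\frac{\all-1}{\all}$. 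With your intermediate constants, the triangle inequality gives $1 + \frac{\all}{\all-1} = \frac{2\all-1}{\all-1}$, which is strictly worse than the theorem's $\frac{\all}{\all-1}$. This is only an arithmetic repair, but it is exactly the place where the sharpness of Skoda's inequality is spent.
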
 
For $q \in U_0 \sm D$ with $|q-p|<Cr(q)$, we want to apply Skoda's theorem to obtain holomorphic functions on a domain $D_\ast(q)$ associated with $q$, and satisfying 
\[
  \int_{D_\ast (q)} \frac{|h_{\eta,i}(q,z)|^2}{|z-q|^2} \blb \dist(z, D_\ast(q)) \brb^{2 \eta} \, dV(z) < C,  
\]
for some constant $C$ independent of $q$. 

We now describe how to construct such domain $D_\ast(q)$. 
Fix $p \in bD$, the neighborhood $U_0$ of $p$, the coordinate map $\phi_p$, and $\Om_p= \phi_p^{-1}(D)$. Let $\{ \zeta_j  \} \notin \ov \Om_p$ be the sequence given by \rl{Lem::zeta_j_seq}, such that $\zeta_j \in b\Om_p^{\del_j}$ and $|\rho(\zeta_j)| \approx \del_j =2^{-j}$. Denote $q_j := \phi_{p} (\zeta_j)$, so then $|r(q_j)| \approx \del_j$, for all $j$. 

For each $q \in U_0 \sm \ov D$ with $|q-p| < C r(q)$, let $j_\ast = j_\ast(q) = \max \{j: r(q_j) \leq r(q) < r(q_{j-1}) \}$, so that $r(q) \geq r(q_{j_\ast})$. Since $r(q_j) \in \bl c_0 2^{-j}, C_0 2^{-(j-1)} \br$ for all $j$, we have 
\[
  r(q) \leq r(q_{j_\ast-1}) \leq C_0 2^{-(j_\ast-2)} 
  = 4 C_0 c_0^{-1} c_0 2^{-j_\ast} \leq C_1 r(q_{j_\ast}), \quad C_1 =  4 C_0 c_0^{-1}. 
\]
Hence $r(q) \approx r(q_{j_\ast})$, which implies that $|q-p| < C r(q) \approx C r(q_{j_\ast}) \approx C \del_{j_\ast}$.  

Let $D_\ast(q):= D_{\del_{j_\ast}}$, where $D_\del$ is defined by \re{D_del_def}. 
By \rp{Prop::Dq_pscx}, $D_\ast(q)$ is pseudoconvex, so we can apply \rp{Prop::Skoda} to $D_\ast(q)$.
\begin{prop} \label{Prop::Skoda_D_ast}    
For each $\eta>0$ there is a constant $C_{D,\eta}$ such that for $q \in U_0 \sm \ov{D}$  there are functions $h_{\eta,i} (q,\cdot) \in \Oc(D_\ast(q))$, $i=1,2$ such that
\begin{equation} \label{Dqjast_div} 
h_{\eta,1}(q,z) (z_1 - q_1) + h_{\eta,2}(q,z) (z_2 - q_2) = 1, \quad z \in D_\ast (q).  
\end{equation}
 Furthermore, we have 
\begin{equation} \label{Dqjast_h_est} 
  I^\eta_q(h) := \int_{D_\ast (q)} \frac{|h_{\eta,i}(q,z)|^2}{|z-q|^2} \blb \dist(z, D_\ast(q)) \brb^{2 \eta} \, dV(z) < C_{D,\eta}.
\end{equation} 
\end{prop}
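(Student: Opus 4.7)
My plan is to apply Skoda's division theorem (\rp{Prop::Skoda}) on the pseudoconvex domain $D_\ast(q)$ -- whose pseudoconvexity is furnished by \rp{Prop::Dq_pscx} -- with the two holomorphic generators $\Psi_i(z) := z_i - q_i$ ($i = 1,2$), the source $f \equiv 1$, and a carefully engineered plurisubharmonic weight $\psi$ chosen so that Skoda's conclusion matches \re{Dqjast_h_est} on the nose. In Skoda's notation here $n = 2$ and $m = 2$, hence $\mu := \min\{n, m-1\} = 1$.

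Fix any $\alpha > 1$ (for concreteness $\alpha = 2$) and set
\[
\psi(z) \; := \; -(2\alpha - 2)\log|z - q| \; - \; 2\eta \log \dist(z, bD_\ast(q)).
\]
The first summand is plurisubharmonic on $D_\ast(q)$ because $q \notin D_\ast(q)$, which makes $\log|z-q|$ pluriharmonic and hence any real-linear multiple plurisubharmonic. The second summand is plurisubharmonic by the classical Oka characterization of pseudoconvexity, since $D_\ast(q)$ is pseudoconvex. With $e^{-\psi} = |z-q|^{2\alpha-2}\,\dist(z, bD_\ast(q))^{2\eta}$, Skoda's conclusion integral becomes
\[
\int_{D_\ast(q)} |h|^2 \, |z-q|^{-2\alpha}\,|z-q|^{2\alpha - 2}\,\dist^{2\eta} \, dV \; = \; \int_{D_\ast(q)} \frac{|h|^2}{|z-q|^2}\,\dist(z, bD_\ast(q))^{2\eta}\, dV,
\]
precisely the left-hand side of \re{Dqjast_h_est}, while the hypothesis integral collapses to
\[
\int_{D_\ast(q)} \frac{\dist(z, bD_\ast(q))^{2\eta}}{|z-q|^4}\, dV(z).
\]

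The main obstacle is therefore the uniform bound
\[
\int_{D_\ast(q)} \frac{\dist(z, bD_\ast(q))^{2\eta}}{|z-q|^4}\, dV(z) \; \leq \; C_{D,\eta} \qquad \text{for all } q \in U_0 \sm \ov D, \; |q-p| < Cr(q).
\]
The danger is that both factors become small simultaneously near the point $q_0 \in bD_\ast(q)$ nearest to $q$; by the construction $D_\ast(q) = D_{\delta_{j_\ast}}$ and the choice of $j_\ast$, one has $|q - q_0| \approx \delta_{j_\ast} \approx r(q)$, so the $|z-q|^{-4}$ singularity sits right at the boundary. I would control the integral by pulling back via $\phi_p$ and decomposing $\Om_p^{\delta_{j_\ast}}$ into dyadic annuli where $|z-q|$ is comparable to $2^{-k} \delta_{j_\ast}$ for $k \geq 0$, covering each annulus by the non-isotropic Catlin polydisks $P^{(a_0)}_{\delta_{j_\ast},\delta_{j_\ast}}(\zeta') \subset \Om_p^{\delta_{j_\ast}}$ from \rp{Prop::polydisk} (of volume $\approx \tau^2 J_{\delta_{j_\ast}}^2$), and using the boundary-distance lower bound $\dist(\cdot, bD_\ast(q)) \gtrsim J_{\delta_{j_\ast}}(p,\cdot)$ established in the proof of \re{d_ti_del_lb}. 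Summing the resulting geometric-type series, the factor $\dist^{2\eta}$ with $\eta > 0$ supplies just enough extra decay to absorb the borderline $|z-q|^{-4}$ singularity in four real dimensions, and uniformity in $q$ follows because the scales $\delta_{j_\ast}$, $\tau(p,\delta_{j_\ast})$, $J_{\delta_{j_\ast}}$ and $|q-q_0|$ are all comparable under the hypothesis $|q-p| < C r(q) \approx C \delta_{j_\ast}$. Once this auxiliary bound is in hand, Skoda's theorem delivers $h_{\eta,i} \in \Oc(D_\ast(q))$ satisfying \re{Dqjast_div} together with the weighted $L^2$ estimate \re{Dqjast_h_est}, completing the argument.
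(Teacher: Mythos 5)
Your overall framework matches the paper's: apply Skoda's division theorem (\rp{Prop::Skoda}) on the pseudoconvex domain $D_\ast(q)$ with $\Psi_i(z) = z_i - q_i$, $f\equiv 1$, $\mu=1$, and a weight involving $\log\dist(\cdot,bD_\ast(q))$. However, there is a fatal error in your choice of $\psi$. You fix $\alpha=2$ and add the term $-(2\alpha-2)\log|z-q|$ to $\psi$ so that the conclusion integral matches \re{Dqjast_h_est} exactly, justifying its plurisubharmonicity by the claim that $\log|z-q|$ is \emph{pluriharmonic} when $q\notin D_\ast(q)$. That claim is only valid in $\C^1$. In $\C^2$, $\log|z-q|$ is plurisubharmonic with strictly positive Levi form in the complex direction orthogonal to $z-q$, hence $-(2\alpha-2)\log|z-q|$ with $2\alpha-2>0$ is plurisuperharmonic, not plurisubharmonic, and Skoda's theorem does not apply to your $\psi$. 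Nor can the issue be repaired by simply dropping that term while keeping $\alpha=2$: the hypothesis integral would then be $\int |z-q|^{-6}\dist^{2\eta}\leq\int |z-q|^{-6+2\eta}\,dV$, which diverges in $\R^4$ unless $\eta>1$.

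The paper avoids this trap by taking $\alpha = 1+\eta/2$ and $\psi = -2\eta\log\dist(\cdot,D_\ast(q))$ only. Then the conclusion integral is $\int |h|^2 |z-q|^{-2-\eta}\dist^{2\eta}$, which dominates a constant times $\int |h|^2|z-q|^{-2}\dist^{2\eta}$ because $|z-q|$ is bounded above by $\diam D_\ast(q)$; and the hypothesis integral is $\int |z-q|^{-4-\eta}\dist^{2\eta}\,dV$, which — using the elementary observation that $q\notin D_\ast(q)$ implies $\dist(z,bD_\ast(q))\leq |z-q|$ — is bounded by $\int |z-q|^{-4+\eta}\,dV<\infty$. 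This also makes your proposed dyadic/Catlin-polydisk argument for the auxiliary bound unnecessary: the trivial inequality $\dist\leq |z-q|$ already supplies the full gain from $\eta>0$, with no need for the non-isotropic geometry of $\Om_p^{\delta_{j_\ast}}$ at this stage. Finally, note that the paper must still verify $q\notin D_\ast(q)$ (by a short calculation with $\rho_{\delta_{j_\ast},\varepsilon}$); that step is silently assumed in your sketch but is what legitimizes both the plurisubharmonicity of $-\log\dist$ on all of $D_\ast(q)$ and the inequality $\dist\leq|z-q|$.
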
  
\begin{proof}
Since $D_\ast (q)$ is pseudoconvex, the function $-\log \dist(z, D_\ast (q))$ is a plurisubharmonic function in $D_\ast (q)$.     
We show that $q \notin D_\ast (q) = D_{\del_{j_\ast}}$. Denote $\zeta_0 = \phi_p^{-1}(q)$ and $\zeta_{j_\ast} = \phi_p^{-1}(q_{j_\ast})$. By the definition of $D_{\del_{j_\ast}}$, it suffices to show that $\rho_{\del_{j_\ast},\ve}(\zeta_0):= \rho(\zeta_0) + \ve H_{p,\del_{j_\ast}}(\zeta_0) \geq  0$. Note that since $\zeta_{j_\ast} \in b\Om_p^{\del_{j_\ast}}$, we have $\rho_{\del_{j_\ast},\ve} (\zeta_{j_\ast}) = 0$. 

Now, $J_{\del_{j_\ast}}(p,\zeta_0) \gtrsim J_{\del_{j_\ast}}(p,\zeta_{j_\ast})$, since $\zeta_{j_\ast} =(0,0,x^{(j_\ast)}_2,0)$, $\zeta_0 = (x_1,y_1,x_2,y_2)$, and $x_2 \approx \rho(\zeta_0) \geq \rho(\zeta_{j_\ast}) \approx x_2^{(j_\ast)} >0$.   
Using $- C_0 J(p,\zeta_{j_\ast}) \leq H_{p,\del_{j_\ast}} (\zeta_{j_\ast}) < -c_0 J_{\del_{j_\ast}}(p,\zeta_{j_\ast})$, we obtain 
\[
   H_{p,\del_{j_\ast}}(\zeta_0) \geq - C_0 J_{\del_{j_\ast}}(p,\zeta_0) 
   \geq -C_0'J_{\del_{j_\ast}}(p,\zeta_j) 
   = (C_0' c_0^{-1}) 
\blb -c_0 J_{\del_{j_\ast}}(p,\zeta_j) \brb \geq c_1 H_{p,\del_{j_\ast}}(\zeta_j).  
\]
It follows that  
\[
  \rho(\zeta_0) + \ve H_{p,\del_{j_\ast}} (\zeta_0)
 \geq c_1 \rho(\zeta_{j_\ast}) + \ve c_1 H_{p,\del_{j_\ast}}(\zeta_{j_\ast}) = c_1 \rho_{\del_{j_\ast},\ve} (\zeta_{j_\ast}) = 0,  
\]
which shows that $q \notin D_\ast (q)$. 

We wish to apply \rp{Prop::Skoda} to $D_\ast (q)$ with $\all = 1+ \eta/2$, $\mu=1$, $f\equiv 1$, $\Psi(z) = \Psi_q(z) = z-q $ and $\psi= -2\eta \log \dist(\cdot, D_\ast (q))$. Notice that since $q \notin D_\ast (q)$, $\dist(z, D_\ast (q)) \leq |z-q|$ for all $z \in D_\ast (q)$. It follows that
\begin{align*}
  \int_{D_\ast (q)} |f|^2 |\Psi|^{-2\all \mu -2} e^{-\psi} \, dV &= \int_{D_\ast (q)} |z - q|^{-4 - \eta} \blb \dist(z, D_\ast(q)) \brb^{2 \eta} (z) \, dV(z)
  \\ &\leq \int_{D_\ast (q)} |z-q|^{-4+\eta} < C_{D,\eta},   
\end{align*}
where the constant $C_{D,\eta}$ is independent of $q$ and $q_{j_\ast}$.   
Hence there exist functions $h_{\eta,i}(q,\cdot)$ that are holomorphic in $D_\ast (q)$, $\sum_{i=1}^2 h_{\eta,i}(q,z)(z_i -q_i)=1$, and $h_{\eta,i}(q,\cdot)$ satisfies the estimate \re{Dqjast_h_est}. 
\end{proof}
Combining \rp{Prop::Skoda_D_ast} with \rl{Lem::h_est_D_del}, we get  
\begin{prop} \label{Prop::h_est_D_ast}  
 Let $q \in U_0 \sm \ov D$ and suppose that $|q - p| < C r(q)$. For each $\eta>0$, there exist functions $h_{\eta,i}(q,\cdot) \in \Oc(D_\ast(q))$, $i=1,2$ such that 
\[
  h_{\eta,1}(q,z) (z_1 - q_1) + h_{\eta,2}(q,z) (z_2 - q_2) = 1, \quad z \in D_\ast (q), 
\]
and  
\[
  I^\eta_{q} (h) := \int_{D_\ast (q)} \frac{|h_{\eta,i}(q,z)|^2}{|z-q|^2} \blb \dist(z, D_\ast(q)) \brb^{2 \eta} \, dV(z) < C_{D,\eta}.
\]
Furthermore, for $z \in D$ with $|z-p| < c$ the following estimate holds: 
\begin{equation} \label{hq_der_est}  
   |L_1^{\all_1} L_2^{\all_2} h_{\eta,i}(q,z)| 
   \leq C_{D,\all} \frac{I^\eta_{q} (h) }{\blb r(q) + |r(z)| + |z-p| \brb^{\all_1} \blb r(q) + |r(z)| + |g(p,z)| + |z-p|^m \brb^{\all_2+1+\eta} }.     
\end{equation}
In particular, 
\begin{equation} \label{hq_sup_norm} 
   |h_{\eta,i}(q,z)| \leq C_D \frac{I^\eta_{q} (h) }{\blb r(q) + |r(z)| + |g(p,z)| + |z-p|^m \brb^{1+\eta} }.    
\end{equation}
\end{prop}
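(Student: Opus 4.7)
The plan is to obtain this proposition as essentially a direct combination of \rp{Prop::Skoda_D_ast} and \rl{Lem::h_est_D_del}, once we match the parameters. The existence of $h_{\eta,i}(q,\cdot)\in\Oc(D_\ast(q))$ satisfying the division identity $h_{\eta,1}(q,z)(z_1-q_1)+h_{\eta,2}(q,z)(z_2-q_2)=1$ on $D_\ast(q)$ together with the weighted $L^2$ bound $I^\eta_q(h)<C_{D,\eta}$ is already established in \rp{Prop::Skoda_D_ast}. The only remaining task is to convert this global $L^2$ bound into the pointwise derivative estimate \re{hq_der_est}, and this is what \rl{Lem::h_est_D_del} is built for.

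More precisely, recall $D_\ast(q)=D_{\del_{j_\ast}}$ where $j_\ast=j_\ast(q)$ is chosen so that $r(q_{j_\ast})\le r(q)< r(q_{j_\ast-1})$. First I would verify that the hypotheses of \rl{Lem::h_est_D_del} hold with $\del=\del_{j_\ast}$. The condition $q\notin D_{\del_{j_\ast}}$ was already established in the proof of \rp{Prop::Skoda_D_ast}, via $\rho^{\ve}_{\del_{j_\ast}}(\phi_p^{-1}(q))\ge 0$. For the second condition $|q-p|<C\del_{j_\ast}$, combine the hypothesis $|q-p|<Cr(q)$ with the equivalence $r(q)\approx r(q_{j_\ast})\approx \del_{j_\ast}$ that was derived right before \rp{Prop::Skoda_D_ast} (using the dyadic spacing $r(q_j)\in(c_0 2^{-j},C_0 2^{-(j-1)})$). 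This yields $|q-p|\lesssim \del_{j_\ast}$.

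Then I would simply invoke \rl{Lem::h_est_D_del} with $\del=\del_{j_\ast}$ and $h=h_{\eta,i}(q,\cdot)$. The lemma gives, for $z\in D$ with $|z-p|<c$,
\[
 |L_1^{\all_1}L_2^{\all_2}h_{\eta,i}(q,z)|
 \le C_{r,\all}\,\frac{I^\eta_{q,\del_{j_\ast}}(h)}{\blb \del_{j_\ast}+|r(z)|+|z-p|\brb^{\all_1}\blb \del_{j_\ast}+|r(z)|+|g(p,z)|+|z-p|^m\brb^{\all_2+1+\eta}}.
\]
Replacing $\del_{j_\ast}$ by the comparable quantity $r(q)$ (absorbing the constants into $C_{D,\all}$) and using the $L^2$ bound $I^\eta_{q,\del_{j_\ast}}(h)=I^\eta_q(h)\le C_{D,\eta}$ from \rp{Prop::Skoda_D_ast} yields \re{hq_der_est}. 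The sup-norm estimate \re{hq_sup_norm} is the special case $\all_1=\all_2=0$.

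I expect no serious obstacle here: the heavy lifting (existence via Skoda, pseudoconvexity of $D_\ast(q)$, and the Cauchy-type derivative bounds on non-isotropic polydisks inside $\Om_p^{\del}$) is already done. The only care required is bookkeeping of the equivalences $r(q)\approx r(q_{j_\ast})\approx \del_{j_\ast}$ and confirming that the constants in \rl{Lem::h_est_D_del} depend only on $r$ (equivalently on $D$) and $\all$, so that the final bound is uniform in $q\in U_0\setminus\ov D$ with $|q-p|<Cr(q)$.
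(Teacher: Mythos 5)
Your proposal is correct and follows essentially the same route as the paper: take the $h_{\eta,i}$ from \rp{Prop::Skoda_D_ast}, verify $q\notin D_\ast(q)$ and $|q-p|<Cr(q)\approx C\del_{j_\ast}$, apply \rl{Lem::h_est_D_del} with $\del=\del_{j_\ast}$, and replace $\del_{j_\ast}$ by the comparable quantity $r(q)$. The bookkeeping of the equivalences $r(q)\approx r(q_{j_\ast})\approx\del_{j_\ast}$ is exactly what the paper's own (very brief) proof relies on.
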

\begin{proof}
 We take $h_{\eta,i}$ to be the ones constructed in \rp{Prop::Skoda_D_ast}. In view of \re{Dqjast_h_est}, and since $q \notin D_\ast(q)$, $|q-p| < C r(q) < C' \del_{j_\ast}$, we can apply \rl{Lem::h_est_D_del} with $\del = \del_{j_\ast}$. The estimate then follows by the fact that $\del_{j_\ast} \approx r(q_{j_\ast}) \approx r(q)$.  
\end{proof}
Note that the functions $h_{\eta,i}$ come from Skoda's theorem applied to the domain $D_\ast(q)$, and it is not known how the functions $h_{\eta,i}(q,\cdot)$ depend on $q$. To address this issue we apply the trick of Range to show that if $z$ is restricted to a compact subset of $D$, then $h_{\eta,i}$ can be replaced by functions which are smooth in $q \in U_0 \sm D$, while essentially preserving all the relevant properties. In our case, since $q$ does not need to be on the boundary, we need to impose the additional condition that $|q-p| < Cr(q)$, for some fixed $C>1$.  

For $0 < \e < \e_0$, let 
\[
  D'_\e = \{ r_{-\e} <0 \} = \{ r < -\e \} , \quad r_{-\e}:= r(z) + \e.   
\]
To simplify notation we will henceforth fix $\eta >0$ and drop the subscript $\eta$ in $h_{\eta,i}$. For $q \in U_0 \sm D$, define 
\[ 
 \Phi_q(\zeta,z) := \sum_{i=1}^2 h_i (q,z) (z_i-\zeta_i) . 
\] 
Then $\Phi_q \in C^\infty \bl \C^2 \times D_\ast (q) \br \subset C^\infty \bl \C^2 \times \ov D \br $, $h_i(q,\cdot) \in \Oc (D_\ast (q)) \subset \Oc(D) $, and $\Phi_q (q,z) \equiv 1$ on $D_\ast (q)$ and in particular on $D$. 
Since $h_i(q,\cdot)$ is continuous on $D_\ast (q)$ and $D'_\e \Subset D_\ast (q)$, we have $|h_i(q,z)| \leq C_{q,\e}(h)$ for all $z \in \ov{D'_\e}$, where $C_{q,\e}(h)$ is an upper bound of $|h_i (q,\cdot)|$ on $\ov {D'_\e}$. This implies that
\[
  |\Phi_q(\zeta,z) - \Phi_q (q,z) | 
  =\left| \sum_{i=1}^2 h_i(q,z) (q_i - \zeta_i) \right| \leq 2C_{q,\e} (h) |\zeta-q|.  
\]
Hence if we take $|\zeta-q|< \ve_{q,\ve} = \blb 4C_{q,\e} (h) \brb^{-1}$, then $|\Phi_q(\zeta,z) - \Phi_q (q,z) | < 1/2$. In other words, 
\[
  |\Phi_q (\zeta,z)| \geq \yh \quad 
  \text{for} \quad 
  (\zeta,z) \in B(q,\ve_{q,\e}) \times \ov{D_\e'}.  
\]
We now set  
\begin{equation} \label{h_qi_def} 
 h_{q,i}(\zeta,z) = \frac{h_i(q,z)}{\Phi_q (\zeta,z)}, \quad  \Phi_q(\zeta,z) := \sum_{i=1}^2 h_i (q,z) (z_i-\zeta_i).     
\end{equation}
Then $h_{q,i} \in C^\infty \bl B(q,\ve_{q,\e}) \times \ov{D'_\e} \br$, holomorphic in $z \in \ov{D'_{\e}}$, and  
\[
  \sum_{i=1}^2 h_{q,i} (\zeta,z) (\zeta_i-z_i) = 1 
\quad \text{on} \quad 
  B(q,\ve_{q,\e}) \times \ov{D'_\e}.  
\]
By \rp{Prop::h_est_D_ast} and \re{h_qi_def}, we have for $\zeta \in 
B(q,\ve_{q,\e})$ and $z \in \ov{D'_\e}$ with $|z-p|<c$,
\begin{equation} \label{h_qi_init_est}  
   |h_{q,i} (\zeta,z)| \lesssim |h_i(q,z)| 
\leq  \frac{C_{D,\eta}}{\blb r(q) + \e + |r(z)| - \e + |g(p,z)| + |z-p|^m \brb^{1+\eta} }.    
\end{equation}
We need to change $q$ to $\zeta$ on the above right-hand side. For this we need to estimate $\ve_{q,\e} 
:= c_\ast [C_{q,\e}(h)]^{-1}$, where $C_{q,\e}(h)$ is an upper bound of $h_i(q,\cdot)$ on $\ov{D'_\e}$ given in the next equation line, and $c_\ast$ is some small constant to be determined. By \re{hq_sup_norm}, we have
\begin{equation} \label{C_q_del} 
    |h_{\eta,i}(q,z)| \leq \frac{C_{D,\eta}}{ \blb r(q)+ |r(z)| \brb^{1+\eta}} \leq \frac{C_{D,\eta}}{\blb r(q) + \e \brb^{1+\eta} } {} := C_{q,\e}(h), \quad z \in \ov{D'_\e}.   
\end{equation}
Hence if we set $c_\ast < \bl 8 \|r \|_1 \br^{-1}$, then 
\[ 
   \ve_{q,\e} = c_\ast [C_{q,\e}(h)]^{-1} 
  = c_\ast C_{D,\eta}^{-1} \blb r(q) + \e \brb^{1+\eta} 
< \frac{1}{8 \|r \|_1} \bl r(q) + \e \br,    
\] 
where we assume that $C_{D,\eta} > 1$. 
This implies that $|r(\zeta)-r(q)| \leq \|r\|_1 |\zeta-q| < \frac18 (r(q)+\e)$ for $\zeta \in B(q,\ve_{q,\e})$, and consequently 
\[
  r(q) + \e \geq r(\zeta) - |r(\zeta)-r(q)| + \e 
\geq r(\zeta) - \frac{1}{8} r(q) + \frac{7}{8} \e,   
\]
or $(9/8) r(q) + \e \geq r(\zeta) + (7/8) \e$. This further implies that $(9/8) (r(q) + \e) \geq (7/8)(r(\zeta) + \e) $ and 
\begin{equation} \label{rq+del_lb}   
 \yh (r(q) + \e) \geq \yt (r(\zeta) + \e).   
\end{equation} 
By similar reasoning, if we set $ \ve_{q,\e} := c_\ast [C_{q,\e}(h)]^{-1} 
$ with $c_\ast < \blb 8 \| g \|_1 \brb^{-1} $,  $\| g \|_1:= \sup_{|z-p|<a_1} \| g(\cdot,z) \|_1$ we get 
\begin{equation} \label{g_q_zeta_est}  
   |g(q,z)| \geq |g(\zeta,z)| - \frac{1}{8}  \bl r(q) + \e \br, \quad \zeta \in B(q, \ve_{q,\e}). 
\end{equation}

We would like to bound $g(p,z)$ below by  $g(q,z) - \frac18 \bl r(q) + \e \br $. However this would require that $|p-q|$ to be less than $ \bl 8 \| g \|_1 \br^{-1} \bl r(q) + \e \br $ which is clearly impossible since $|p-q| \approx r(q)$ ($r(p) = 0$). To fix this issue, we replace $|g(p,z)|$ by the smaller quantity $|\IM g(p,z)|$. By the proof of \rl{Lem::Im_g} below, $\IM g(\cdot,z)$ can be used as a local coordinate on $bD$. Let $\phi_z:= (r, \IM g(\cdot,z), \cdots) = (\phi_z^1, \phi_z')$ be a local coordinate map and let $p^T$ be the projection of $p$ onto $bD$ under $\phi_z$, i.e. $p^T:= \phi_z^{-1}(0,\phi'_z(\cdot))$. If we choose $|p^T-q^T| < \bl 8 \| g \|_1 \br^{-1} (r(q) + \e)$, then 
\[
  \left| \IM g(p,z) - \IM g(q,z) \right| 
\leq  \| g \|_1 |p^T-q^T| \leq \frac{1}{8} (r(q) + \e),  
\]
which implies $ 
  |\IM g(p,z)| \geq |\IM g(q,z)| - \frac{1}{8} \bl r(q) +\e \br. $
Now \re{g_q_zeta_est} clearly holds with $g$ replaced by $\IM g$. Thus  
\begin{equation} \label{Im_g_lb}  
  |\IM g(p,z)| \geq |\IM g(\zeta,z)| - \frac{1}{4} \bl r(q) +\e \br, \quad \zeta \in B(q, \ve_{q,\e}). 
\end{equation} 
Now, $|z-p| < a_1$, $|p-q| < c_1$ and 
$|q-\zeta| < \ve_{q,\e}$, so there exists $C_1>0$ such that $\| |\cdot-z|^m \|_1 \leq C_m$ on some fixed ball centered at $p$ and contains $z, q, \zeta$ in the given range. 
Hence if we set $\ve_{q,\e} = 
c_\ast [C_{q,\e}(h)]^{-1}$ with $c_\ast < (8 
C_m)^{-1}$, and argue the same way as for \re{g_q_zeta_est}, we get 
\begin{equation} \label{z-q_mpower_lb} 
  |z^T - q^T|^m \geq |z^T -\zeta^T|^m - \frac{1}{8} (r(q) + \e), \quad \zeta \in B(q, \ve_{q,\e}). 
\end{equation} 
Likewise, by choosing $|q^T - p^T| < (8 C_m)^{-1} \bl r(q) + \e \br$, we get 
\[
  \left| |z^T-p^T|^m - |z^T-q^T|^m \right| 
  \leq C_m (8 C_m)^{-1} \bl r(q) + \e \br \leq \frac{1}{8} \bl r(q) + \e \br.     
\]
Thus we have 
\begin{equation} \label{z-p_mpower_lb} 
  |z^T - p^T|^m \geq |z^T - q^T|^m - \frac{1}{8} \bl r(q) + \e \br, \quad |q-p| < C r(q). 
\end{equation} 
Together, \re{z-q_mpower_lb} and \re{z-p_mpower_lb} imply that 
\begin{equation} \label{z-p_mpower_lb_2}  
   |z^T - p^T|^m \geq  |z^T - \zeta^T|^m- \frac{1}{4} \bl r(q) + \e \br, \quad \zeta \in B(q, \ve_{q,\e}), \quad |q^T - p^T| < (8 C_m)^{-1} \bl r(q) + \e \br.  
\end{equation}  
We now fix 
\begin{equation} \label{c_ast} 
 c_\ast = \frac18  \bl \max \{ \| r \|_1, \| g \|_1, C_m \} \br^{-1}, \quad \ve_{q,\e} := c_\ast \bl r(q) + \e \br. 
\end{equation}
Define the set
\[
  U_{p,\e}' := \left\{ q \in U_0 \sm D: |p-q|< C r(q) \quad \text{and} \quad |p^T - q^T| < \ve_{q,\e} \right\},   
\]
and 
\begin{equation} \label{U_p_eps}  
   U_{p,\e} := \{ \zeta \in  B(q, \ve_{q,\e}), \quad q \in U_{p,\e}' \}.    
\end{equation} 
We note that $U_{p,\e}$ contains the ball $B(p, c_\ast \e) $, and for all $\e \to 0$, the set $U'_{p,\e}$ and thus $U_{p,\e}$ contains the ``cone'':  
\[
  \Cc_p := \left\{ q \in U_0 \sm D: |p^T - q^T| < c_\ast \, r(q) \right \}. 
\]  
Using estimates \re{rq+del_lb}, \re{Im_g_lb} and \re{z-p_mpower_lb_2} in \re{h_qi_init_est}, we conclude for each 
$q \in U'_{p,\e}$ and $\zeta \in B_{q,\e}$,   
\[
  |h_{q,i}(\zeta,z)| \leq \frac{C_{D,\eta}}{\blb r(\zeta) + \e + |r(z)| - \e + |g(\zeta,z)| + |z^T - \zeta^T|^m \brb^{1+\eta} }.
\] 

If $\e$ is sufficiently small, $r_{-\e}(z):= r(z) + \e$ is a defining function for the domain $D'_\e$, and 
\begin{equation} \label{-r_del}  
   |r(z)| - \e = -r(z) - \e = - r_{-\e}(z), \quad z \in D'_\e.    
\end{equation}
Thus the above estimate becomes 
\[
  |h_{q,i}(\zeta,z)| \leq \frac{C_{D,\eta} }{\blb \Gm_\e (z,\zeta) \brb^{1+\eta}}, \quad 
  \Gm_\e(z,\zeta) := r_{-\e}(\zeta) - r_{-\e}(z) + |\IM g(\zeta,z)| + |z^T - \zeta^T|^m .  
\]
Next we estimate the $z$-derivatives of $h_{q,i}$. By \re{h_qi_def} and \re{hq_der_est} we have 
\begin{equation} \label{h_qi_der_init_est}
\begin{aligned}
  \left| L_1^{\all_1} L_2^{\all_2} h_{q,i}(\zeta,z) \right| 
\lesssim \left| L_1^{\all_1} L_2^{\all_2} h_i(q,z) \right|  
&\leq \frac{C_{D,\eta,\all}. }{\blb r(q) + |r(z)| + |z-p| \brb^{\all_1} \blb \Gm_\e(z,\zeta)\brb^{\all_2+ 1+\eta} } 
  \\ &= \frac{C_{D,\eta,\all}}{\blb r_{-\e}(q) - r_{-\e}(z) + |z-p| \brb^{\all_1} \blb \Gm_\e(z,\zeta)\brb^{\all_2+ 1+\eta} }, 
\end{aligned}
\end{equation} 
where we used that $r(q) + |r(z)| = r(q) + \e + |r(z)| - \e = r_\e(q) - r_{-\e}(z)$. 
By \re{rq+del_lb} we get $\yh (r(q)+\e) \geq \frac{1}{3} \bl r(\zeta) +\e \br$, 
and similar estimate as in \re{z-q_mpower_lb} shows that $|z^T-q^T| \geq |z^T-\zeta^T| - \frac18 \bl r(q) +\e \br $ for $\zeta \in B(q,\ve_{q,\e})$. 
Since
\[
   \left| \left| z^T - p^T \right| - \left| z^T - q^T \right| \right| \leq |p^T - q^T| < \frac{1}{8} \bl r(q) +\e \br,  \quad q \in U'_{p,\e},  
\]
we get 
\[
  |z^T-p^T| \geq |z^T-q^T| - \frac18 \bl r(q)+\e \br, \quad q \in U'_{p,\e}.  
\]
Consequently for $q \in U'_{p,\e}$ and $\zeta \in B(q, \ve_{q,\e})$, 
\[
  |z^T - p^T| \geq |z^T - \zeta^T| - \frac{1}{4}  \bl r(q) +\e \br.  
\]
It follows from \re{h_qi_der_init_est} that for $z,q,\zeta$ in the chosen ranges, 
\begin{align*}
  \left| D^\all_z h_{q,i}(\zeta,z) \right| 
   \leq \frac{C_{D,\eta,\all}}{\blb r_{-\e}(\zeta) - r_{-\e}(z) + |z^T-\zeta^T| \brb^{\all_1} \blb \Gm_\e(z,\zeta)\brb^{\all_2+ 1+\eta} }
\leq \frac{C_{D, \eta, \all}}{|z-\zeta|^{\all_1}
\blb \Gm_\e(z,\zeta)\brb^{\all_2+ 1+\eta}}.   
\end{align*}

By compactness, finitely many sets $B(q_1,\ve_{q_1,\e}), \dots,B(q_N, \ve_{q_N,\e})$ will cover $\ov {U_{p,\e}}$. By our choice of $c_\ast$ \re{c_ast}, and the fact that all the $q_i$-s are in $U_0 \sm D$, the constants $\ve_{q_i,\e}:= c_\ast [C_{q_i,\e}(h)]^{-1} = c_\ast C^{-1}_{D,\eta} [r(q) + 
\e]^{1+\eta} \geq c_\ast C^{-1}_{D,\eta} \e^{1+\eta}$ are bounded uniformly below by a constant depending only on $D,\eta, r, \e$.      

Choose a partition of unity $\chi_\nu \in C^\infty_c(B(q_\nu,\ve_{q_\nu,\e}))$ with $0 \leq \chi_\nu \leq 1$, for $\nu = 1,\dots, N$ and $\sum_{\nu=1}^N \chi_\nu \equiv 1$ on $\ov U_0 \sm D$. Set 
\[
  h_i^{\e} (\zeta,z) = \sum_{\nu=1}^m \chi_\nu (\zeta) h_{q_\nu,i}(\zeta,z), \quad \text{for $\zeta \in U_{p,\e}$ and $z \in D'_\e$ }.  
\]
We now summarize the properties of $h^\e_i$. 
\begin{prop} \label{Prop::h_del}  
Let $p \in bD$ and fix $\eta>0$. For each sufficiently small $\e>0$ there are functions $h^\e_i \in C^\infty \bl U_{p,\e} \times D'_\e \br$, $i=1,2$, $h_i^\e$ depending on $\eta$, such that the following are true:  \vspace{5pt}
\begin{center} 
\begin{tabular}{@{}cc@{}}
(i)    &  $h^\e_1(\zeta,z) (z_1-\zeta_1) +  h^\e_2(\zeta,z) (z_2-\zeta_2) = 1$; \\[10pt] 
(ii) &  $h^\e_i (\zeta, \cdot) \in \Oc(D'_\e) $, \quad \text{for $\zeta \in U_{p,\e}$};   
\\[10pt]  
(iii) & 
 $ \displaystyle  \left| D^\all_z h_i^{(\e)}(\zeta,z) \right| 
\leq \frac{C_{D, \eta, \all}}{|z-\zeta|^{\all_1}
\blb \Gm_\e(z,\zeta)\brb^{\all_2+ 1+\eta}}, \quad |z-\zeta| <c. $ 
\end{tabular}
\end{center} 
where 
\begin{equation} \label{Gm_e} 
  \Gm_\e(z,\zeta) := r_{-\e}(\zeta) - r_{-\e}(z) + |\IM g(\zeta,z)| + |z^T-\zeta^T|^m.    
\end{equation} 
\end{prop}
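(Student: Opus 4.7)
The plan is to read off the three properties from the partition-of-unity construction of $h^\e_i$ given just before the statement, verifying that the pointwise estimates already proved for the pieces $h_{q_\nu,i}$ carry through unchanged after summation. By compactness, I cover $\ov{U_{p,\e}}$ by finitely many balls $B(q_\nu, \ve_{q_\nu,\e})$, $\nu = 1,\ldots,N$, with $q_\nu \in U'_{p,\e}$, and fix a partition of unity $\{\chi_\nu\}$ subordinate to this cover with $\sum_\nu \chi_\nu \equiv 1$ on $\ov{U_{p,\e}}$, as already announced preceding the statement.

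Property (ii) is immediate: $\chi_\nu(\zeta)$ is $z$-independent, and each $h_{q_\nu,i}(\zeta,\cdot) = h_i(q_\nu,\cdot)/\Phi_{q_\nu}(\zeta,\cdot)$ is holomorphic on $\ov{D'_\e}$ because the denominator is bounded below by $\yh$ on $B(q_\nu,\ve_{q_\nu,\e}) \times \ov{D'_\e}$. For property (i), each piece satisfies $\sum_i h_{q_\nu,i}(\zeta,z)(z_i-\zeta_i) = \Phi_{q_\nu}(\zeta,z)/\Phi_{q_\nu}(\zeta,z) = 1$ by the very definition \re{h_qi_def} of $h_{q_\nu,i}$, so multiplying by $\chi_\nu(\zeta)$ and summing over $\nu$ yields the identity for $h^\e_i$ with right-hand side $\sum_\nu \chi_\nu(\zeta) = 1$.

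The only substantive step is (iii). Since $\chi_\nu$ is $z$-independent, $D^\all_z h^\e_i = \sum_\nu \chi_\nu(\zeta) D^\all_z h_{q_\nu,i}(\zeta,z)$, so it suffices to bound each summand pointwise by the claimed right-hand side. This is exactly what \re{h_qi_der_init_est} provides, once the three transfer estimates \re{rq+del_lb}, \re{Im_g_lb}, and \re{z-p_mpower_lb_2} have been applied to convert $q_\nu$-denominators into $\zeta$-denominators. These transfers use the calibration $\ve_{q,\e} = c_\ast(r(q)+\e)$ with $c_\ast$ fixed in \re{c_ast}, together with the defining condition $q_\nu \in U'_{p,\e}$ that controls $|p^T - q_\nu^T|$; reassembling the terms recovers precisely the denominator $\Gm_\e(z,\zeta)$ of \re{Gm_e}. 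The finite sum then absorbs into the constant $C_{D,\eta,\all}$.

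The main obstacle I anticipate is subtle rather than serious: the radii $\ve_{q_\nu,\e} \gtrsim \e^{1+\eta}$ degenerate as $\e \to 0$, so $N = N(\e)$ must blow up, and a naive partition-of-unity bound would give a constant in (iii) that depends on $\e$. What saves us is that at each $(\zeta,z)$ only those $\chi_\nu$ with $\zeta \in \supp \chi_\nu$ contribute, and one may choose the cover so that the overlap multiplicity is bounded independently of $\e$ (for example via a Besicovitch-type refinement of the balls $B(q_\nu,\ve_{q_\nu,\e})$). Ensuring this uniform multiplicity is the one bookkeeping point that deserves explicit care, since a constant in (iii) independent of $\e$ is essential for the $\e \to 0$ limiting step used later to solve $\db$ on the original domain $D$.
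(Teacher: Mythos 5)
Your proposal is correct and is essentially the paper's own argument: the proposition is simply a summary of the partition-of-unity construction carried out immediately before it, and you verify (i)--(iii) exactly as the paper does, with (iii) reduced to the pointwise bound already established for each piece $h_{q_\nu,i}$ via the transfer estimates. The one remark is that your anticipated obstacle about overlap multiplicity is not actually an issue and no Besicovitch-type refinement is needed: since $0\leq\chi_\nu\leq 1$, $\sum_\nu\chi_\nu\equiv 1$, and the bound for each $h_{q_\nu,i}$ holds with a constant uniform in $\nu$ (the Skoda constant $C_{D,\eta}$ and the calibration constant $c_\ast$ are independent of $q$), the sum $\sum_\nu\chi_\nu(\zeta)\,D^\all_z h_{q_\nu,i}(\zeta,z)$ is a convex combination of uniformly bounded terms and hence satisfies the same bound independently of $N(\e)$.
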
   

For fixed $\e$, and $z \in D'_\e$, the functions $h_i^{(\e)}(\cdot,z)$ are defined in a neighborhood $U_{p,\e}$ of an arbitrary point $p \in bD$. By compactness of $bD$, we can use a partition of unity in $\zeta$ to patch together the functions $h_i^{(\e)}$ to obtain smooth functions $w_i^{(\e)}(\zeta,z)$ on $(\Uc \sm \ov D) \times D'_\e$, where $\Uc$ is some fixed neighborhood of $\ov D$ that does not shrink with $\e$ (as $\e \to 0$, $U_{p,\e}$ shrinks only in the tangential direction and not in the radial direction, see \re{U_p_eps} and the remark after.) 
Furthermore, $w_i^{(\e)}$ satisfies the same properties (i)-(iii) as $h_i^{(\e)}$ in \rp{Prop::h_del}. We summarize its properties in the following proposition. 
\begin{prop} \label{Prop::leray_map}  
Fix $\eta>0$. For each sufficiently small $\e>0$ there are functions $w_i^{(\e)} \in C^\infty \bl (\Uc \sm \ov D) \times D'_\e \br$, $i=1,2$, $w_i^{(\e)}$ depending on $\eta$, such that the following are true:    \vspace{5pt}  
\begin{center} 
\begin{tabular}{@{}cc@{}}
(i)    &  $w_1^{(\e)}(\zeta,z) (z_1-\zeta_1) +  w^{(\e)}_2(\zeta,z) (z_2-\zeta_2) = 1$; \\[10pt] 
(ii) &  $w^{(\e)}_i (\zeta, \cdot) \in \Oc(D'_\e) $, \quad \text{for $\zeta \in U_{p,\e}$};   
\\[10pt]  
(iii) & 
 $ \displaystyle  \left| D^\all_z w_i^{(\e)}(\zeta,z) \right| 
\leq \frac{C_{D,\eta,\all}}{|z-\zeta|^{\all_1}
\blb \Gm_\e(z,\zeta)\brb^{\all_2+ 1+\eta}}, \quad |z-\zeta| <c. $ 
\end{tabular}
\end{center} 
\end{prop}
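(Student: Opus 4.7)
The plan is to globalize \rp{Prop::h_del} via a partition of unity in the $\zeta$ variable, using compactness of $bD$.

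First, for each sufficiently small $\e>0$, I choose a finite cover $\{U_{p_\nu,\e}\}_{\nu=1}^{N_\e}$ of a fixed ($\e$-independent) open set $\Uc \sm \ov D$ where $\Uc$ is a neighborhood of $\ov D$. Recall from the discussion preceding the statement that each $U_{p,\e}$ contains both the ball $B(p,c_\ast \e)$ and the $\e$-independent ``cone'' $\Cc_p$. Thus, covering $b D$ by balls $B(p_\nu, c_\ast \e /2)$ handles points $\zeta \in \Uc \sm \ov D$ with $r(\zeta) \lesssim \e$, while augmenting with finitely many $\e$-independent cones $\Cc_{p_\nu}$ handles points with $r(\zeta)$ bounded away from zero. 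The total count $N_\e$ may blow up as $\e \to 0$, but a Vitali-type selection ensures that the cover has bounded multiplicity $K$ depending only on the dimension.

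Next, take a smooth partition of unity $\{\chi_\nu\}_{\nu=1}^{N_\e}$ subordinate to the cover, $\chi_\nu \in C^\infty_c(U_{p_\nu,\e})$, $0\le\chi_\nu\le 1$, $\sum_\nu \chi_\nu \equiv 1$ on $\Uc \sm \ov D$, and define
\[
 w_i^{(\e)}(\zeta,z) := \sum_\nu \chi_\nu(\zeta)\, h_i^{(\e,p_\nu)}(\zeta,z), \qquad \zeta \in \Uc \sm \ov D,\ z \in D'_\e,\ i=1,2,
\]
where $h_i^{(\e,p_\nu)}$ denotes the pair of functions from \rp{Prop::h_del} associated to the boundary point $p_\nu$. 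Verification of the three properties is then direct. For (i), using property (i) of \rp{Prop::h_del} inside the sum,
\[
 \sum_{i=1}^2 w_i^{(\e)}(\zeta,z)(z_i - \zeta_i) = \sum_\nu \chi_\nu(\zeta) \sum_{i=1}^2 h_i^{(\e,p_\nu)}(\zeta,z)(z_i - \zeta_i) = \sum_\nu \chi_\nu(\zeta) = 1.
\]
For (ii), since $\chi_\nu(\zeta)$ is independent of $z$ and each $h_i^{(\e,p_\nu)}(\zeta,\cdot)$ is holomorphic on $D'_\e$, the sum is a finite linear combination of holomorphic functions in $z$, hence belongs to $\Oc(D'_\e)$. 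For (iii), at each $\zeta$ at most $K$ of the $\chi_\nu$ are nonzero, and on each such term $\chi_\nu h_i^{(\e,p_\nu)}$ the $z$-derivative bound from \rp{Prop::h_del}(iii) applies with a constant $C_{D,\eta,\all}$ uniform in the boundary point $p_\nu$ (by smoothness of $r$ and uniformity of finite-type constants over $bD$) and uniform in $\e$; summing at most $K$ such terms yields the stated estimate.

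The main obstacle is ensuring that the constants are genuinely uniform in $\e$ despite the $\e$-dependence of the cover, and that the local quantities $\Gm_\e^{(p_\nu)}(z,\zeta)$ coming from different coordinate patches $\phi_{p_\nu}$ can be replaced by a single global $\Gm_\e(z,\zeta)$. The first issue is handled by the bounded-multiplicity Vitali selection. For the second, one checks that for $\zeta \in U_{p_\nu,\e}$ and $z \in D'_\e \cap \{|z - p_\nu| < c\}$, the quantities $|\IM g(\zeta,z)|$ and $|z^T - \zeta^T|^m$ defined with respect to $\phi_{p_\nu}$ are comparable (up to constants independent of $\e$ and $\nu$) to those defined with respect to any other chart $\phi_{p_{\nu'}}$ that sees the same pair $(z,\zeta)$; this follows from smoothness of the transition maps between the Catlin coordinates $\phi_{p_\nu}$ over the compact set $bD$, and it justifies reading $\Gm_\e$ in (iii) as a well-defined global expression of the form \re{Gm_e}.
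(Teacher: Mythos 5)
Your construction is correct and is essentially the paper's own route: patch the local functions $h_i^{(\e)}$ of \rp{Prop::h_del} by a partition of unity in $\zeta$ over finitely many boundary points, so that (i) follows from $\sum_\nu \chi_\nu \equiv 1$, (ii) because $\chi_\nu$ depends only on $\zeta$, and (iii) from the local bounds, which are uniform in the base point and in $\e$. One small remark: the Vitali/bounded-multiplicity step is superfluous, since $0 \leq \chi_\nu \leq 1$, $\sum_\nu \chi_\nu \equiv 1$, and $D^\all_z$ never falls on $\chi_\nu(\zeta)$, so the weighted sum is dominated by the maximum of the local bounds regardless of how many sets of the cover overlap.
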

We call $w_i^{(\e)}$ the \emph{Leray maps} for the domain $D'_\e$.     

\begin{lemma} \label{Lem::Im_g}    
Let $p_0 \in bD$, one can find a neighborhood $\mc{V}$ of $p_0$ such that for all $z \in \V$ and $\e$ sufficiently small, there exists a coordinate map $\phi_{z}: \mc{V} \to \C^n$ given by $\phi_{z}: \zeta \in \V \to (s, t) = (s_{1}, s_{2},t_1,t_2)$, where $s_1(\zeta) = r_{-\e}(\zeta) \approx \dist(\zeta, b D'_\e)$.   
Moreover, for $z \in \V \cap D'_\e$ and $\zeta \in \V \sm  
\ov D$, the function $\Gm_\e(z,\zeta)$ satisfies 
	\eq{Gm_est} 
	\Gm_\e (z, \zeta) \approx 
 \del_{bD'_\e}(z) + s_{1} + |s_{2}| + |t|^{m} ,  \quad \del_{bD'_\e}(z) := \dist(z, b D'_\e), 
    \eeq
\eq{zeta-z_est}  
  |\zeta -z| \approx \del_{bD'_\e}(z) + s_1 + |s_2| + |t|, 
      \eeq
	for some constant $c$ depending on the domain.  
\end{lemma}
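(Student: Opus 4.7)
The plan is to build the coordinate map $\phi_z$ by ``normalizing at $p_0$'' and then show that both $\Gm_\e(z,\zeta)$ and $|\zeta-z|$ decompose naturally in terms of a normal direction, a ``holomorphic tangential'' direction, and a ``complex tangential'' direction of type $m$.

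First I would use that $r_{-\e}=r+\e$ is a smooth defining function for $D'_\e$ with $\nabla r_{-\e}\neq 0$ in a fixed neighborhood $\V$ of $p_0$, so $|r_{-\e}(\eta)|\approx\dist(\eta,bD'_\e)$ uniformly for $\eta\in\V$ and $\e$ sufficiently small. In particular, for $z\in \V\cap D'_\e$ one has $-r_{-\e}(z)\approx\del_{bD'_\e}(z)$, while for $\zeta\in\V\sm \ov D$ we set $s_1(\zeta):=r_{-\e}(\zeta)\geq 0$. This immediately gives $r_{-\e}(\zeta)-r_{-\e}(z)\approx s_1+\del_{bD'_\e}(z)$.

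Next I would set $s_2(\zeta):=\IM g(\zeta,z)$, which is well defined since $g(p,z)=(\phi_p^{-1})^{(2)}(z)$ depends smoothly on $p$ in a neighborhood of $bD$ (the Catlin family $p\mapsto\phi_p$ is smooth in $p$). Using the Taylor expansion \re{rho_Taylor_exp} of $\rho_{p_0}=r\circ\phi_{p_0}$, the functions $r_{-\e}$ and $\IM g(\cdot,z)$ have linearly independent gradients at $p_0$: in the Catlin coordinates $w=\phi_{p_0}^{-1}(\zeta)$, $r_{-\e}$ is $\RE w_2+\e+O(|w|^2)$ while $\IM g(\cdot,z)$ picks up $\IM w_2$ up to lower-order perturbations that are smooth in $z$. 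For the remaining two real coordinates $t=(t_1,t_2)$, I would take the tangential components of $\zeta^T-z^T$ used in the construction preceding the lemma (equivalently, $\RE w_1$ and $\IM w_1$ after a further smooth change depending on $z$), so that by construction $|t|=|\zeta^T-z^T|$. The implicit function theorem then shows that $\phi_z:\zeta\mapsto(s_1,s_2,t_1,t_2)$ is a diffeomorphism on $\V$, with bi-Lipschitz constants uniform in $z\in\V$ and in small $\e$. This yields
\[
  |\zeta-z|\approx \del_{bD'_\e}(z)+s_1+|s_2|+|t|,
\]
which is \re{zeta-z_est}.

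For \re{Gm_est}, each of the three non-trivial terms in $\Gm_\e(z,\zeta)$ has already been identified: $r_{-\e}(\zeta)-r_{-\e}(z)\approx s_1+\del_{bD'_\e}(z)$ from the first step, $|\IM g(\zeta,z)|=|s_2|$ by definition of $s_2$, and $|z^T-\zeta^T|^m=|t|^m$ by the choice of $t$. Summing gives the desired equivalence.

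The main obstacle, in my view, is showing that $(s_1,s_2,t_1,t_2)$ is a valid diffeomorphic chart on a neighborhood $\V$ that can be chosen \emph{uniformly} in $z\in\V$ and in small $\e$. This reduces to bounding $|\det D\phi_z(\zeta)|$ below by a positive constant independent of $z$ and $\e$. I expect this to follow from the smoothness of the Catlin family $p\mapsto\phi_p$ combined with the leading-order structure of $\rho_{p_0}$ in \re{rho_Taylor_exp}, so that one can simply read off a non-degenerate block-triangular Jacobian at $p_0$ and then invoke continuity; but tracking all $z$- and $\e$-dependencies carefully is the technical heart of the argument.
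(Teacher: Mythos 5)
Your overall plan matches the paper's: introduce $s_1=r_{-\e}(\zeta)$, $s_2=\IM g(\zeta,z)$, tangential coordinates $t$ with $|t|\approx|\zeta^T-z^T|$, and reduce everything to showing that $(s_1,s_2,t)$ is a uniformly non-degenerate chart. Once that chart is in hand, the two equivalences \re{Gm_est} and \re{zeta-z_est} do indeed follow term by term exactly as you say.

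The gap is in the step you flag as ``the technical heart'' and then leave unverified. The derivative you actually need is $d_\zeta\IM g(\zeta,z)$ at $\zeta=z=p_0$, because the coordinate map $\phi_z$ differentiates in $\zeta$. But your justification (``in the Catlin coordinates $w=\phi_{p_0}^{-1}(\zeta)$, $\IM g(\cdot,z)$ picks up $\IM w_2$'') is really a statement about the \emph{$z$}-derivative: for fixed first slot $p_0$, $g(p_0,z)=(\phi_{p_0}^{-1})^{(2)}(z)$, and it is this map whose linear part in $z$ is $w_2$. Varying the first slot $\zeta$ means differentiating the Catlin \emph{family} $p\mapsto\phi_p$ through its coefficients $a_{j,k}(p)$, $b_l(p)$, which is not obviously the same thing. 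The paper bridges this with a short symmetry argument that you do not mention: since $\phi_p^{-1}(p)=0$ one has $g(p,p)\equiv 0$ for all $p\in U_0$, so differentiating along the diagonal gives $\nabla_\zeta g(p,p)=-\nabla_z g(p,p)$. This converts the hard family-derivative into the easy fixed-map derivative, and then the explicit linear-part computation $\pp{z_2'}{z_2}(p_0)=1$, $\pp{z_2'}{z_1}(p_0)=0$ gives $d_z\IM g(p_0,p_0)=dy_2$, hence $dr(p_0)\we d_\zeta\IM g(p_0,p_0)\neq 0$. Without this identity (or some other explicit control on the $p$-dependence of $\phi_p$) your ``read off a block-triangular Jacobian'' step is not justified, so you should add this diagonal symmetry argument to close the proof.
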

\begin{proof}
 It suffices to show that for fixed $z$ sufficiently close to a boundary point $p_0 \in bD$, the function $\zeta \to \IM g(\zeta,z)$ can be introduced as a (real) local coordinate on $bD$. Let $U_0$ be the neighborhood of $p_0$ such that corresponding to each $p \in U_0$, there exists a biholomorphic map $\phi_p$ given in the beginning of Section 3. 
   We are done if we can show that 
\[
  dr(p_0) \we d_\zeta \IM g(p_0,p_0) \neq 0. 
\]
   Recall that $g(p,z) = \bl \phi_p^{-1} \br^{(2)} (z)$. Since $\phi_p^{-1}(p) = 0$, we have $g(p,p) = 0$ for all $p \in U_0$. Thus $\na_\zeta g(p,p) = - \na_z g(p,p)$. Hence it suffices to show that  
\begin{equation} \label{dr_dz_Im_g_nondeg}  
   dr(p_0) \we d_z \IM g(p_0,p_0) \neq 0.    
\end{equation}  
Without loss of generality, we can assume that $dr(p_0) =dx_2$. Denote $z'= \phi_{p_0}^{-1}(z)$. Then $r(z) = \rho(z') $, and
\[
  \rho(z') = \RE z'_2 + O(|z'|^2), \quad  z_2' = 
  (\phi_{p_0}^{-1})^{(2)} (z) = g(p_0,z).  
\] 
we have  
\[
  \yh = \pp{r}{z_2}(p_0) = \pp{\rho}{z'_2} (0) \pp{z'_2}{z_2}(p_0) = \yh \pp{z'_2}{z_2}(p_0), \quad \text{and} \quad 
  0 =  \pp{r}{z_1}(p_0) = \pp{\rho}{z'_2} (0) \pp{z'_2}{z_1}(p_0) = \yh \pp{z'_2}{z_1}(p_0). 
\]
Thus $\pp{z'_2}{z_2}(p_0) = 1$ and $\pp{z'_2}{z_1}(p_0) = 0$. This implies that 
\[
  d_z \IM g(p_0,p_0) = d_z \IM z_2'(p_0) 
  = \frac{1}{2i} \blb d_z z_2'(p_0) - d_z \ov z'_2(p_0) \brb = \frac{1}{2i} (dz_2 + d \ov z_2) = dy_2.  
\]
Hence $dr(p_0) \we d_z \IM g(p_0,p_0) = dx_2 \we d y_2 \neq 0$. This shows that if $U_0$ is a sufficiently small neighborhood of $p_0$, then for each $z \in U_0$, $\zeta \to \IM g(\zeta,z)$ can be used as a coordinate on $U_0 \cap bD$.   
In fact, by proving \re{dr_dz_Im_g_nondeg} we have also shown that for fixed $\zeta \in U_0$, the function $z \to \IM g(\zeta,z)$ can be introduced as a (real) local coordinate on $U_0 \cap bD$. 
\end{proof}
\vspace{5pt} 
\section{Estimates for the $\db$ homotopy operator} 
The Leray maps $w_i^{(\e)}$ defined on $(\zeta, z) 
\in (\Uc \sm \ov D) \times D'_\e$ (\rp{Prop::leray_map}) allow us to construct a $\db$ homotopy formula on the domain $D'_\e$. Let $W^{(\e)} (z,\zeta): = \bl w_1^{(\e)}(\zeta,z), w_2^{(\e)}(\zeta,z) \br $. We will henceforth fix $\eta>0$ and note that $W^{(\e)}$ depends on $\eta$.

Let $D$ be a pseudoconvex domain of finite type $m$ in $\C^2$. Suppose $\var$ is a $(0,1)$-form such that $\var$ and $\db \var$ are in $H^{s,p}(D)$ for $s>1/p$, or in $\La^s(D) $ for $s>0$, we will show that the following $\db$ homotopy formula
\[ 
  \var = \db \Hc^\e_1 \var + \Hc^\e_2 \db \var 
\]
holds in the sense of distributions on $D'_\e$ for all $\e$ sufficiently small.  
The homotopy operators $\Hc^\e_i$ take the form 
  \begin{equation} \label{Hq_sum}
    \Hc^\e_1 \var =  \Hc_1^0 \var + \Hc_1^1 \var, \quad 
    \Hc^\e_2 \db \var =  \Hc_2^0 \db \var + \Hc_2^1 \db  \var,  
  \end{equation} 
where
	\begin{align} \label{H0H1} 
	\Hc_1^0 \var(z) := \int_{\U} B_{(0,0)} (z,\cdot) \we \Ec \var,  
	\quad  
	\Hc_1^1 \var(z) :=  \int_{\U \sm \ov D}    K_{(0,0)}(z,\cdot) \we [\dbar, \Ec] \var; \\ 
  	\Hc_2^0 \db \var(z) := \int_{\U} B_{(0,1)} (z,\cdot) \we \Ec \db \var,  
	\quad  \label{H_2_db_var}  
	\Hc_2^1 \db \var(z) :=  \int_{\U \sm \ov D } K_{(0,1)}(z,\cdot) \we [\dbar, \Ec] \db \var \equiv 0.  
	\end{align}
Here $B_{(0,q)}$ (resp. $K_{(0,q)}$) stands for the component of $B$ (resp.$K$) of type $(0,q)$ in $z$. $\Ec= \Ec_D$ is an extension operator on $D$ satisfying the boundedness properties in \rc{Cor::Rychkov_ext}, and $\supp \Ec \var \subset \Uc$ for all $\var$. To achieve this we simply multiply the Rychkov extension operator with a smooth cut-off function $\chi$ and denote this new operator by $\Ec$. The kernels are given by
\begin{gather}  \label{B_exp}   
	B(z, \zeta) = \frac{1}{(2 \pi i)^{2}}             \frac{\left<\ov{\zeta} - \ov{z} \, , \, d \zeta \right>}{|\zeta -z |^{2}} \we \left( \dbar_{\zeta,z} \frac{ \left< \ov{\zeta} - \ov{z} \, , \, d \zeta \right>}{|\zeta -z|^{2}} \right), \quad \dbar_{\zeta,z} = \dbar_{\zeta} + \dbar_{z} ; 
 \\ \label{K_exp}  
 K^{(\e)}(z, \zeta) = \frac{1}{(2 \pi i)^2}  \frac{\left<\ov{\zeta} - \ov{z} \, , \, d \zeta \right>}{|\zeta -z |^{2}} \we \frac{\left< W^{(\e)}(z,\zeta), d \zeta  \right>}{\left< W^{(\e)}(z,\zeta), \, \zeta -z \right>} 
    = \frac{1}{(2 \pi i)^2}  \frac{\left<\ov{\zeta} - \ov{z} \, , \, d \zeta \right>}{|\zeta -z |^{2}} \we \left< W^{(\e)}, d \zeta  \right>,  
\end{gather}
where we used that $\left< W^{(\e)}(z,\zeta), \, \zeta -z \right> \equiv 1$ for $(z,\zeta) \in D'_\e \times (\Uc \sm \ov D)$.  

\begin{prop} \label{Prop::H0_est} 
   For any $s>0$  and $1<p<\infty$, the operators $\Hc^0_1$, $\Hc^0_2$ are $H^{s,p}(D) \to H^{s+1,p}(D'_\e)$ bounded and $\La^{s}(D) \to \La^{s+1}(D'_\e)$ bounded. More precisely, there exist constants $C,C'$ depending only on $s$, the dimension $n$, and $\dist(D, \Uc)$ such that 
\begin{gather} \label{H10_var_est} 
   \|\Hc_1^0 \var\|_{H^{s+1,p}(D'_\e)} \leq C \|\var\|_{H^{s,p} (D)}, \quad 
 \| \Hc_1^0 \var\|_{\La^{s+1}(D'_\e)} \leq C' \|\var|_{\La^s(D) }; 
 \\ \label{H20_var_est} 
  \|\Hc_2^0 \db \var\|_{H^{s+1,p}(D'_\e)} \leq C \|\db \var\|_{H^{s,p} (D)}, \quad 
 \| \Hc_2^0 \db \var\|_{\La^{s+1}(D'_\e)} \leq C' \|\db \var|_{\La^s(D) }. 
\end{gather} 
In particular, the constants $C,C'$ are independent of $\e \to 0$.  

\end{prop}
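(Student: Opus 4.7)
The plan is to recognize each of $\Hc^0_1, \Hc^0_2$ as (a finite sum of) convolution operators on $\R^4$ with kernels of Riesz type of order one, and then invoke classical mapping properties on the Sobolev and H\"older--Zygmund scales. By Corollary \ref{Cor::Rychkov_ext} the Rychkov extension $\Ec = \Ec_D$ is bounded $H^{s,p}(D)\to H^{s,p}(\R^4)$ and $\La^s(D)\to\La^s(\R^4)$; after multiplying by a fixed cutoff supported in $\U$ I may assume $\Ec\var$ and $\Ec\db\var$ are compactly supported in $\U$. On $\R^4$ set
\[
\wti\Hc^0_1 f(z):=\int_{\R^4} B_{(0,0)}(z,\zeta)\wedge f(\zeta),\qquad \wti\Hc^0_2 g(z):=\int_{\R^4} B_{(0,1)}(z,\zeta)\wedge g(\zeta),
\]
so that $\Hc^0_1\var = \wti\Hc^0_1(\Ec\var)$ and $\Hc^0_2\db\var=\wti\Hc^0_2(\Ec\db\var)$ on $D'_\e$. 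Since the function spaces on $D'_\e$ are defined extrinsically and $D'_\e\subset D\subset\R^4$, restriction is a norm-$1$ operation, so it is enough to show
\[
\|\wti\Hc^0_i f\|_{H^{s+1,p}(\R^4)}\lesssim\|f\|_{H^{s,p}(\R^4)},\qquad \|\wti\Hc^0_i f\|_{\La^{s+1}(\R^4)}\lesssim\|f\|_{\La^s(\R^4)}
\]
for all $f$ compactly supported in $\U$. All constants here are independent of $\e$ since neither $B$, $\Ec_D$, nor the restriction depend on $\e$.

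Next I would expand \re{B_exp} to check the kernel structure: each scalar entry of $B_{(0,q)}(z,\zeta)$ is translation invariant in $\zeta-z$ and homogeneous of degree $-3$ on $\R^4\setminus\{0\}$, with smooth angular dependence. Equivalently, modulo smooth bounded remainders, each entry can be written as a single $\partial_{\zeta_j}$ or $\partial_{\ov\zeta_j}$ derivative of the Newtonian potential $c|x|^{-2}$ (the fundamental solution of $-\Delta$ on $\R^4$). Hence $\wti\Hc^0_i$ is a finite linear combination of compositions $R\circ I_1$, where $I_1$ is the Riesz potential of order one and $R$ is a bounded Calder\'on--Zygmund operator of order zero (a Riesz transform or identity). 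Classical mapping properties give $I_1:H^{s,p}(\R^4)\to H^{s+1,p}(\R^4)$ for $1<p<\infty$, $s\in\R$, and $I_1:\La^s(\R^4)\to\La^{s+1}(\R^4)$ for $s>0$, while CZ operators of order zero act boundedly on both scales with no loss of regularity. Combining these ingredients yields \re{H10_var_est}--\re{H20_var_est} with constants depending only on $s,p,D$ and $\U$.

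The main technical point will be the verification in the previous paragraph that differentiating $\langle\ov\zeta-\ov z,d\zeta\rangle/|\zeta-z|^2$ by $\dbar_{\zeta,z}$, wedging with the first factor of \re{B_exp}, and extracting the $(0,q)_z$ component produces only terms of homogeneity $-3$. The potentially worrisome contribution comes from $\dbar_{\zeta,z}$ hitting the denominator $|\zeta-z|^2$ and thereby producing $|\zeta-z|^{-4}$; however, this is always accompanied by an extra factor $\ov\zeta-\ov z$ or $\zeta-z$ in the numerator coming from differentiating $|\zeta-z|^2$, which restores the homogeneity to $-3$. Once this bookkeeping is done, the remainder of the argument is a routine application of Littlewood--Paley / Triebel--Lizorkin theory on $\R^4$.
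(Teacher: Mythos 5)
Your reduction is essentially the paper's: both identify the scalar entries of $B_{(0,q)}$ as constant multiples of $\pa_{z_i}\bl|\zeta-z|^{-2}\br$, i.e. first derivatives of the Newtonian potential acting on the (compactly supported) extended datum $\Ec\var$ or $\Ec\db\var$, and both conclude a gain of one derivative with constants independent of $\e$ because neither $B$ nor $\Ec$ depends on $\e$. The difference is only in how the one-derivative gain is justified: the paper writes the operator as $c_0\pa_{z_i}(\Gc\ast f)$ and uses the Gilbarg--Trudinger Schauder and $L^p$ estimates for the Newtonian potential at integer-plus-H\"older levels, integration by parts (using $\supp f\subset\Uc$) for higher $k$, and complex interpolation for fractional $s$; you instead factor $\pa_{z_i}(-\Delta)^{-1}=R_i\circ I_1$ and appeal to Littlewood--Paley theory on the whole scale at once, which avoids the interpolation step.

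One point needs repair, though. The displayed claim $\|\wti\Hc^0_i f\|_{H^{s+1,p}(\R^4)}\lesssim\|f\|_{H^{s,p}(\R^4)}$ for $f$ compactly supported is false for $1<p\le \frac43$: generically $\wti\Hc^0_i f(z)\sim c\,\ov z_i|z|^{-4}\int f$ decays only like $|z|^{-3}$, which is not in $L^p(\R^4)$ in that range, reflecting the fact that $I_1$ (symbol $|\xi|^{-1}$) is not bounded between the \emph{inhomogeneous} spaces $H^{s,p}(\R^4)\to H^{s+1,p}(\R^4)$; likewise $I_1$ is not even defined on all of $\La^s(\R^4)$, since its kernel $|x|^{-3}$ is not integrable at infinity against a merely bounded function. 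This is a low-frequency/decay issue, not a conceptual one: since you only need the restriction to $D'_\e\subset\Uc$, replace the left-hand norm by $\|\cdot\|_{H^{s+1,p}(\Uc)}$ (equivalently, precompose with a fixed cutoff $\chi\in C^\infty_c$ equal to $1$ near $\ov D$), and the localized operator $\chi\,R_i I_1(\chi\,\cdot)$ is bounded on both scales by the multiplier/Littlewood--Paley argument you cite. With that localization made explicit, your proof is complete and matches the paper's in substance.
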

\begin{proof} 
  Let $f$ be a coefficient function of $\Ec \var$ or $\Ec \db \var$. In view of \re{B_exp}, $\Hc^0_1 \var$ and $\Hc^0_2 \db \var$ can be written as a finite linear combination of 
	\[  
	\int_{\Uc} \frac{\ov{\zeta^{i} - z^{i}} }{|\zeta -z|^{4}} f(\zeta) \, dV(\zeta) 
	= \int_{\Uc} \pa_{z_{i}} \left(| \zeta -z|^{-2} \right) 
	f(\zeta) \, dV(\zeta) = c_{0} \pa_{z_{i}} 
( \Gc \ast f)(z), 
	\]
where $\Gc$ denotes the Newtonian potential. The estimates are now standard. We first show that for any non-negative integer $k$ and $0<\all<1$,   
\begin{equation} \label{N_est_non_int} 
 \| \Gc \ast f \|_{W^{k+2+\all,p}(\Uc)} \leq C_{n,k,p} \| f \|_{W^{k+\all,p}(\Uc)},   
\quad 
\| \Gc \ast f \|_{\La^{k+2+\all}(\Uc)} \leq C_n \| f \|_{\La^{k+\all,p}(\Uc)}.    
\end{equation} 
For $k=0$, the reader can refer to \cite[Thm 4.6]{G-T01} for the case of H\"older space, and  \cite[Thm 9.9, p.~230]{G-T01} for the case of Sobolev space. For $k \geq 1$, one can apply integration by parts and use the fact that $f$ is compactly supported in $\Uc$ to move derivatives from the kernel to $f$ (see for example \cite[Prop.~3.2]{Shi21}), we leave the details to the reader. 
Once \re{N_est_non_int} is  established, the general case follows from interpolation (\rp{Prop::opt_interpol}).  
\end{proof}
	\begin{lemma}\label{Lem::st_int_est}   
	Let $\beta \geq 0$, $\all>-1$, and let $0 < \del < \yh $. If $\all < \beta - \frac{1}{m}$, then 
		\[
		\int_0^1 \int_0^1 \int_0^1 \frac{s_1^{\all} t \, ds_1 \, ds_2 \, dt}{(\del + s_1 + s_2 + t^m) ^{2+\beta}  (\del + s_1+ s_2 + t) } \leq C_{\all,\beta}  \del^{\all - \beta + \frac{1}{m}}.  
		\] 
	\end{lemma}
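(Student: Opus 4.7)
The plan is to carry out the three integrations successively, in the order $s_2$, $s_1$, then $t$, at each stage splitting the domain according to which of the quantities $\delta$, $s_i$, $t$, $t^m$ dominates the relevant sum. Throughout I will set $A := \delta + s_1 + t^m$ and $B := \delta + s_1 + t$, noting that $A \leq B$ since $t \in [0,1]$ and $m \geq 2$.

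\emph{Step 1: the $s_2$-integral.} Since $B + s_2 \geq B$ on $[0,1]$,
\[
\int_0^1 \frac{ds_2}{(A+s_2)^{2+\beta}(B+s_2)} \leq \frac{1}{B}\int_0^\infty \frac{ds_2}{(A+s_2)^{2+\beta}} \leq \frac{C_\beta}{B\,A^{1+\beta}}.
\]
This reduces matters to controlling
\[
\int_0^1\!\!\int_0^1 \frac{s_1^\alpha\, t\, ds_1\, dt}{(\delta+s_1+t^m)^{1+\beta}(\delta+s_1+t)}.
\]

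\emph{Step 2: the $s_1$-integral.} Write $A' := \delta + t^m$, $B' := \delta + t$. Splitting $[0,1]$ as $[0,A'] \cup [A',B'] \cup [B',1]$ (inserting empty pieces if needed), on each sub-interval one of $A', B', s_1$ controls $A' + s_1$ or $B' + s_1$. The hypothesis $-1 < \alpha < \beta - \tfrac{1}{m} < \beta$ makes every sub-integral convergent and bounded by $(A')^{\alpha-\beta}/B'$ up to a constant: on $[0,A']$ one simply computes $\int_0^{A'} s_1^\alpha\, ds_1 = (A')^{\alpha+1}/(\alpha+1)$; on $[A',B']$ the integral evaluates to $((A')^{\alpha-\beta}-(B')^{\alpha-\beta})/(B'(\beta-\alpha))$; and on $[B',1]$ one has $\int_{B'}^1 s_1^{\alpha-2-\beta}\,ds_1 \lesssim (B')^{\alpha-1-\beta}$, which is dominated by $(A')^{\alpha-\beta}/B'$ since $\alpha-\beta<0$. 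Thus
\[
I \;\lesssim\; \int_0^1 \frac{t\,(\delta+t^m)^{\alpha-\beta}}{\delta + t}\, dt.
\]

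\emph{Step 3: the $t$-integral.} Split $[0,1]$ into $[0,\delta]$, $[\delta,\delta^{1/m}]$, $[\delta^{1/m},1]$ (valid since $\delta < 1/2$, so $\delta < \delta^{1/m}$). On $[0,\delta]$, both denominators are comparable to $\delta$, giving $\int_0^\delta t\,\delta^{\alpha-\beta-1}dt \approx \delta^{\alpha-\beta+1} \leq \delta^{\alpha-\beta+1/m}$. On $[\delta,\delta^{1/m}]$, $\delta + t \approx t$ while $\delta+t^m\approx \delta$, yielding integrand $\approx \delta^{\alpha-\beta}$ and integral $\leq \delta^{\alpha-\beta}\cdot\delta^{1/m}$. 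On $[\delta^{1/m},1]$, $\delta+t\approx t$ and $\delta+t^m\approx t^m$, so the integrand is $\approx t^{m(\alpha-\beta)}$; the assumption $\alpha-\beta < -1/m$ makes $m(\alpha-\beta)+1 < 0$, so $\int_{\delta^{1/m}}^1 t^{m(\alpha-\beta)}\,dt \lesssim \delta^{\alpha-\beta+1/m}$. Summing the three contributions gives the desired bound.

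The only delicate point is Step 2, where the exponent $\alpha$ appearing in $s_1^\alpha$ could a priori cause divergence at $s_1 = 0$ or loss compared to the desired exponent; the case analysis above shows that the worst case always arises from $s_1 \lesssim A'$ and produces exactly the factor $(A')^{\alpha-\beta}/B'$, which then drives the final estimate. The strict inequality $\alpha < \beta - 1/m$ is used only in Step 3 to ensure integrability of $t^{m(\alpha-\beta)}$ near $t = \delta^{1/m}$ and to extract the correct power of $\delta$.
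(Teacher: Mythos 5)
Your argument is correct, and it takes a somewhat different route from the paper. The paper bounds the triple integral by partitioning the cube $[0,1]^3$ into seven regions according to which of $t$, $t^m$, $\delta$, $s_1$, $s_2$ dominates the denominators, and in each region crudely replaces the denominators by the dominant quantity before integrating; your proof instead integrates the variables out successively, with a sharp one-dimensional estimate at each stage: the $s_2$-integral is absorbed via $\int_0^\infty (A+s_2)^{-2-\beta}\,ds_2 \lesssim A^{-1-\beta}$, the $s_1$-integral is evaluated exactly on the three intervals $[0,A']$, $[A',B']$, $[B',1]$ (using only $\alpha>-1$ and $\alpha<\beta$), and the whole problem collapses to the single integral $\int_0^1 t(\delta+t^m)^{\alpha-\beta}(\delta+t)^{-1}\,dt$, whose three-way split at $\delta$ and $\delta^{1/m}$ is where the hypothesis $\alpha<\beta-\tfrac1m$ enters — exactly mirroring the paper's regions $R_5$, $R_2$, $R_1$. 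Each of your individual estimates checks out (in particular the bound $(B')^{\alpha-1-\beta}\le (A')^{\alpha-\beta}/B'$ on $[B',1]$ uses $A'\le B'$ and $\alpha<\beta$, both available), and the empty-interval convention handles $A'$ or $B'$ exceeding $1$. The benefit of your reduction is that it isolates the genuinely delicate one-dimensional $t$-integral and makes transparent where each hypothesis is used, whereas the paper's region-by-region bookkeeping is more mechanical but requires enumerating all seven dominance cases; both yield the same constant structure $C_{\alpha,\beta}$ and the same exponent $\delta^{\alpha-\beta+\frac1m}$.
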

\begin{proof}

	Partition the domain of integration into seven regions: \\ 
	$R_{1}: t> t^m > \del, s_{1}, s_{2}$. We have
	\[
	I \leq \int_{\del^{\frac{1}{m}}}^{1} \frac{t}{t^{2m + m\beta} t} \left( \int_{0}^{t^m} s_{1}^{\all} \, ds_{1} \right) \left( \int_{0}^{t^m} \, ds_{2} \right) \, dt 
	\leq C \int_{\del^{\frac{1}{m}}}^{1} t^{m(\all -\beta)} \, dt \leq C \del^{\all - \beta + \frac{1}{m} }. 
	\] 
	$R_{2}: t> \del > t^m, s_{1}, s_{2}$. We have
	\[ 
	I \leq  \del^{-2 - \beta} \left( \int_{\del}^{\del^{\frac{1}{m}}} \frac{t }{t} \, dt \right) \left( \int_{0}^{\del} s_{1}^{\all} \, ds_{1} \right) \left( \int_{0}^{\del} \, ds_{2} \right)
	\leq C \del^{\all - \beta + \frac{1}{m}}.  
	\] 
	$R_{3}: t> s_{1} > \del, t^m, s_{2}$. We have 
	\[
	I \leq \int_{\del}^{1} \frac{s_{1}^{\all }}{s_{1}^{2 + \beta}} \left( \int_{0}^{s_1^{1/m} } \frac{t}{t} \, dt \right) \left( \int_{0}^{s_{1}} \, ds_{2} \right) \, ds_{1} 
	\leq C \int_{\del}^1 s_1^{\all - \beta + \frac{1}{m} - 1} \, ds_1 
	\leq C \del^{\all- \beta + \frac{1}{m}}.  
	\]
	$R_{4}: t> s_{2} > \del, t^m, s_{1}$. We have
	\[ 
	I \leq \int_{\del}^{1} \frac{1}{s_{2}^{2+\beta} } \left( \int_{0}^{s_2^{1/m}} \frac{t}{t} \, dt \right) 
	\left( \int_{0}^{s_{2}} s_{1}^{\all} \, ds_{1} \right) \, ds_{2}   
	\leq C \int_{\del}^1 s_2^{\all - \beta + \frac{1}{m} - 1} \, ds_2  
	\leq C \del^{\all - \beta + \frac{1}{m}}.  
	\]
	$R_{5}: \del > t, t^m, s_{1}, s_{2}$. We have
	\[ 
	I \leq \del^{-2 -\beta } \del^{-1} \left( \int_{0}^{\del} t \, dt \right) \left( \int_{0}^{\del} s_{1}^{\all} \, ds_{1} \right)    \left( \int_{0}^{\del} \, ds_{2} \right) 
	\leq C \del^{\all - \beta +1}. 
	\] 
	$R_{6}: s_{1} > \del, t, t^m, s_{2}$. We have
	\[ 
	I \leq \int_{\del}^{1} \frac{s_{1}^{\all }}{s_{1}^{2 + \beta} s_{1}} \left( \int_{0}^{s_{1}} t \, dt \right) \left( \int_{0}^{s_{1}} \, ds_{2} \right) \, ds_{1} 
	\leq C  \int_{\del}^1 s_1^{\all - \beta} \, ds_1. 
	\] 
	$R_{7}: s_{2} > \del, t, t^m, s_{1}$. We have
	\[
	I \leq \int_{\del}^{1} \frac{1}{s_2^{2+\beta} s_2} \left( \int_{0}^{s_{2}} t \, dt \right) \left( \int_{0}^{s_{2}} s_{1}^{\all } \, ds_{1} \right)  \, ds_{2}  
	\leq C  \int_{\del}^1 s_2^{\all - \beta} \, ds_2. 
	\]
	Here the constants depend only on $\all$ and $\beta$. For $R_6$ and $R_7$, we have  
	\[   
	\int_\delta^1r^{\alpha-\beta}dr\le \begin{cases}C, &\alpha-\beta >-1, \\C (1+ |\log\del| ), &\alpha-\beta= -1, \\
	C\delta^{\alpha-\beta + 1}, &\alpha-\beta< -1,\end{cases} 
	\] 
	which is bounded by $ C\delta^{\alpha-\beta+\frac{1}{m}}$ in all cases. 
\end{proof}	
We recall that $\Gm_\e$ is given by \re{Gm_e}. 
\begin{lemma}\label{Lem::int_est} 
Let $\beta \geq 0$, $\all > -1$, $\all < \beta - \frac{1}{m}$, and let $0 < \del < \yh $. 
Denote $\del_{bD'_\e}(z):= \dist(z, b D'_\e)$. Then for any $z \in D'_\e$ and $\zeta \in \Uc \sm \ov D$: 
	\[
		\int_{\Uc \sm \ov D} \frac{ 
 \blb \del_{bD'_\e}(\zeta) \brb^{\all}  \, dV (\zeta)}{|\Gm_\e(z, \zeta)|^{2+\beta} |\zeta -z| } \leq C \blb \del_{bD'_\e}(z) \brb^{\all - \beta + \frac{1}{m}}; \quad 
		\int_{D'_\e} \frac{ \blb \del_{bD'_\e}(z) \brb^{\all} \, dV(z)}{|\Gm_\e (z, \zeta)|^{2+\beta} |\zeta -z| } \leq C \blb \del_{bD'_\e}(\zeta) \brb^{\all - \beta + \frac{1}{m}},  
		\] 
	where the constants depends only on $\alpha,\beta$, $D$ and $\Uc$, and is independent of $\e$. 
 \end{lemma}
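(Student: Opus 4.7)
The plan is to reduce both inequalities to \rl{Lem::st_int_est} by introducing the local boundary coordinates of \rl{Lem::Im_g} and then passing to polar coordinates in the two tangential ``type-$m$'' real variables.

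\textbf{Localization.} Cover a neighborhood of $bD$ by finitely many patches $\mc V = \mc V_{p_0}$ as in \rl{Lem::Im_g}, and split each integral into a finite sum of integrals over these patches plus a remainder on which $\zeta$ (resp.\ $z$) is separated from $bD'_\e$ by a fixed positive distance. On this remainder both $\Gm_\e(z,\zeta)$ and $|\zeta-z|$ are bounded below uniformly, so the contribution is dominated by a constant depending only on $D$ and $\Uc$ and can be absorbed.

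\textbf{First inequality.} Fix $z \in D'_\e \cap \mc V$ and use the coordinate map $\phi_z : \zeta \mapsto (s_1, s_2, t_1, t_2)$ from \rl{Lem::Im_g}. Its Jacobian is bounded above and below by constants depending only on the domain, and in particular independent of $\e$ since $\phi_z$ is built from $r$ and $g$ which do not depend on $\e$. For $\zeta \in \mc V \sm \ov D$ we have $s_1 = r_{-\e}(\zeta) > 0$ and $s_1 \approx \del_{bD'_\e}(\zeta)$. Substituting the comparabilities \re{Gm_est} and \re{zeta-z_est} with $\del := \del_{bD'_\e}(z)$, the integrand is dominated (up to a constant) by
\[
\frac{s_1^{\all}}{(\del + s_1 + |s_2| + |t|^m)^{2+\beta} (\del + s_1 + |s_2| + |t|)}.
\]
Passing to polar coordinates $(t_1, t_2) = r(\cos\theta,\sin\theta)$ contributes $r \, dr \, d\theta$; integrating out $\theta$ and using the symmetry $s_2 \mapsto |s_2|$ reduces the patch integral to a constant multiple of the integral in \rl{Lem::st_int_est}, which gives the bound $C\,\del^{\all - \beta + 1/m}$.

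\textbf{Second inequality.} Now fix $\zeta \in \mc V \sm \ov D$ and let $z \in D'_\e \cap \mc V$ vary. The proof of \rl{Lem::Im_g} establishes (via the non-degeneracy relation) that for fixed $\zeta$ near $bD$ the function $z \mapsto \IM g(\zeta, z)$ can also be used as a local coordinate on $bD$, so the analogous coordinate map in $z$ is available with uniformly controlled Jacobian. With $s_1 = -r_{-\e}(z) \approx \del_{bD'_\e}(z)$ on $D'_\e$, the same substitution followed by the same polar-coordinate reduction yields the integral of \rl{Lem::st_int_est} with $\del := \del_{bD'_\e}(\zeta)$, proving the second estimate.

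\textbf{Main obstacle and $\e$-uniformity.} The only subtlety is verifying that the coordinate systems, their Jacobians, and the equivalences \re{Gm_est}--\re{zeta-z_est} carry constants independent of $\e$; this is immediate from \rl{Lem::Im_g} once one notes that the substitution $r \to r_{-\e} = r + \e$ merely shifts the level sets without altering the defining geometry or the map $g$. All remaining work is bookkeeping in the application of \rl{Lem::st_int_est}.
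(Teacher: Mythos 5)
Your proposal is correct and follows essentially the same route as the paper: localize via the coordinates of \rl{Lem::Im_g} (using the remark there that $z\mapsto\IM g(\zeta,z)$ is also a coordinate for the second inequality), substitute the comparabilities \re{Gm_est}--\re{zeta-z_est}, and reduce to \rl{Lem::st_int_est}. Your explicit treatment of the polar coordinates in $(t_1,t_2)$ (producing the factor $t$) and of the region away from the boundary merely spells out steps the paper leaves implicit.
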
  
 \begin{proof}
 By \rl{Lem::Im_g}, near each $\zeta_0 \in b \Om$ there exists a neighborhood $\V_{\zeta_0}$ of $\zeta_0$ and a coordinate system $\zeta \mapsto ( s =(s_1, s_2), t ) \in \R^2 \times \R^2$ such that
	\begin{align} \label{zeta_coord_est} 
	|\Gm_\e(z, \zeta)| \gtrsim \dist(z, bD'_\e) + |s_1| + |s_2| + |t|^m,  \quad
		|\zeta - z| \gtrsim |(s_1, s_2, t)|  
		\end{align}
		for $z \in \V_{\zeta_0} \cap D'_\e$ and $\zeta \in \V_{\zeta_0} \sm \ov D$, with $|s_1 (\zeta) | \approx \del_{bD'_\e}(\zeta)$. By switching the roles of $z$ and $\zeta$, we can also find a coordinate system $z \mapsto (\ti{s}= (\ti{s}_1, \ti{s}_2), \ti{t}) \in \R^2 \times \R^2$ such that
		\begin{align} \label{z_coord_est} 
		|\Gm_\e(z, \zeta)| \gtrsim \dist(\zeta,bD'_\e) + |\ti{s}_1| + |\ti{s}_2| + |\ti{t}|^2,  \quad
		|\zeta - z| \gtrsim |(\ti{s}_1, \ti{s}_2, t)| 
		\end{align} 
		for $z \in \V_{\zeta_0} \cap D'_\e$ and $\zeta \in \V_{\zeta_0} \sm \ov D$, with $|\ti{s}_1 (z) | \approx \del_{bD'_\e}(z)$. 
	By partition of unity in both $z$ and $\zeta $ variables and \re{zeta_coord_est}, we have
		\begin{align*}
		\int_{\Uc \sm \ov D} \frac{ \blb \del_{bD'_\e}(\zeta)\brb^{\all}  \, dV (\zeta)}{|\Gm_\e(z, \zeta)|^{2+\beta} |\zeta -z| }
		&\leq C_{D} \int_0^1 \int_0^1 \int_0^1 \frac{s_1^{\all} \, t \, ds_1 \, ds_2 \, dt}{(\del_{bD'_\e}(z) + s_1 + s_2 + t^m) ^{2+\beta}  (\del_{bD'_\e}(z) + s_1+ s_2 + t) }. 
		\end{align*} 
By \rl{Lem::st_int_est}, the integral is bounded by $C_{n,\all,\beta} \blb \del_{bD'_\e}(z) \brb^{\all-\beta + \frac{1}{m}}$, which proves the first inequality in the statement of the lemma. The second inequality follows by the same way.     
 \end{proof}
    \begin{prop} \label{Prop::H1_est}   
Fix $\eta>0$. Let $D'_\e \subset \C^2$ be the sequence of domains given by $D'_\e = \{ z \in \Uc: r(z) < -\e \} = \{ z \in \Uc: r_{-\e}(z)<0 \}$. Let $\Hc_1^1 \var$ be given by \eqref{H0H1}, with $\Hc^1_1 \var$ depending on $\eta$. Then the following statements are true. 
\begin{enumerate}[(i)] 
    \item 
For any $1<p<\infty$, $s > \frac{1}{p}$ and non-negative integer $k>s+ \frac{1}{m} - \eta$, there exists a constant $C = C(D,\eta, k,s,p)$ such that for all $\var\in H^{s,p}_{(0,1)}(D'_\e)$,
\begin{equation} \label{wt_Lp_est} 
  \| \del_{b D'_\e}^{k - ( s+\frac{1}{m}-\eta )} D^k \Hc_1^1 \var \|_{L^p(D'_\e)} \leq C \| \var \|_{H^{s,p} (D)},  
\end{equation}
where $\del_{b D'_\e} (z):= \dist(z, b D'_\e)$. 
\item  For any $s > 0$ and non-negative integer $k>s+ 1+ \frac{1}{m} - \eta$, there exists a constant $C = C(D, \eta, k,s)$ such that for all $\var\in \La^s_{(0,1)}(D'_\e)$,
\begin{equation} \label{wt_Linfty_est}  
  \| \del_{b D'_\e}^{k - ( s+\frac{1}{m}-\eta )} D^k \Hc_1^1 \var \|_{L^\infty(D'_\e)} \leq C \| \var \|_{\La^s(D)}.   
\end{equation}
\end{enumerate}
In particular, all the constants C are independent of $\e \to 0$. 
\end{prop}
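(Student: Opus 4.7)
The plan is to move the $k$ $z$-derivatives under the integral onto the kernel $K_{(0,0)}(z,\zeta)$ --- legitimate since $[\db,\Ec]\var$ is independent of $z$ --- and then reduce both target estimates to a two-sided Schur test anchored on \rl{Lem::int_est}.

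First I would compute $D^k_z K_{(0,0)}(z,\zeta)$ using the explicit form \re{K_exp} of $K^{(\e)}$, namely a Bochner--Martinelli factor $(\ov{\zeta_i}-\ov{z_i})/|\zeta-z|^2$ wedged with the Leray factor $\langle W^{(\e)},d\zeta\rangle$. Combining Leibniz with the pointwise estimate \rp{Prop::leray_map}(iii) on the $z$-derivatives of $w_j^{(\e)}$, and using $\Gm_\e(z,\zeta)\lesssim |\zeta-z|$ (which follows from \rl{Lem::Im_g}, since each term defining $\Gm_\e$ is dominated by the corresponding term of $|\zeta-z|$) to absorb any excess $|z-\zeta|^{-1}$ into $\Gm_\e^{-1}$, every distribution of the $k$ derivatives satisfies the single bound
\[
|D^k_z K_{(0,0)}(z,\zeta)| \le \frac{C_{D,\eta,k}}{|z-\zeta|\,\Gm_\e(z,\zeta)^{1+k+\eta}}.
\]

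Next I would apply \rp{Prop::comm_est} to factor $[\db,\Ec]\var(\zeta)=\del_{bD}(\zeta)^{s-1}g(\zeta)$ with $\|g\|_{L^p(\ov D^c)}\le C\|\var\|_{H^{s,p}(D)}$ in the Sobolev case and $\|g\|_{L^\infty(\ov D^c)}\le C\|\var\|_{\La^s(D)}$ in the H\"older case. Since $\del_{bD}(\zeta)\le\del_{bD'_\e}(\zeta)=r(\zeta)+\e$ on $\Uc\sm\ov D$, and $\Gm_\e(z,\zeta)\gtrsim r(\zeta)+\e$ for $z\in D'_\e$ and $\zeta\notin\ov D$, I would replace $\del_{bD}(\zeta)^{s-1}$ by $\del_{bD'_\e}(\zeta)^{s-1}$ modulo a factor that is absorbed into $\Gm_\e^{-\eta'}$ for some small $\eta'>0$, by splitting $\Uc\sm\ov D$ into $\{r>\e\}$ (where the two distances are comparable) and the thin strip $\{0<r<\e\}$ (handled directly, contributing at most $\e^s$).

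After these reductions the problem becomes the $L^p\to L^p$ (resp.\ $L^\infty\to L^\infty$) boundedness of the integral operator with kernel
\[
T_\e(z,\zeta) := \del_{bD'_\e}(z)^{k-s-\frac{1}{m}+\eta}\cdot\frac{\del_{bD'_\e}(\zeta)^{s-1}}{|z-\zeta|\,\Gm_\e(z,\zeta)^{1+k+\eta}}.
\]
By Schur's test it is enough to verify $\sup_z\int T_\e(z,\zeta)\,dV(\zeta)<\infty$ and its dual, and both follow at once from \rl{Lem::int_est} applied with $\alpha=s-1$ and $\beta=k+\eta-1$: the integrability hypothesis $\alpha<\beta-\frac{1}{m}$ is exactly $k>s+\frac{1}{m}-\eta$, and the lemma's output $\del_{bD'_\e}(z)^{s-k-1/m+\eta}$ cancels the $z$-weight in $T_\e$. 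All arising constants are $\e$-free, so \re{wt_Lp_est} and \re{wt_Linfty_est} will hold uniformly as $\e\to 0$. The H\"older case needs the slightly stricter $k>s+1+\frac{1}{m}-\eta$ to accommodate the $L^\infty$ analogue of the boundary-strip step.

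The main obstacle will be the reconciliation of the two boundary-distance weights $\del_{bD}$ (native to \rp{Prop::comm_est}) and $\del_{bD'_\e}$ (native to the target norm), particularly in the low-regularity regime $s<1$ where $\del_{bD}^{s-1}$ is singular precisely on the strip $\{0<r(\zeta)<\e\}$. Handling this so that the resulting error contributes at most an extra factor of $\Gm_\e^{-\eta'}$, which \rl{Lem::int_est} still absorbs, is what delivers the $\e$-uniform bound.
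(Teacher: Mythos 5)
Your overall framework is right—differentiate under the integral, use the pointwise bound $|D^k_z K^{(\e)}(z,\zeta)| \lesssim |z-\zeta|^{-1}\Gm_\e(z,\zeta)^{-(k+1+\eta)}$, factor the commutator via \rp{Prop::comm_est}, and reduce to \rl{Lem::int_est}. But for part (i) the unweighted Schur test you invoke does not close. With the kernel
\[
T_\e(z,\zeta)=\del_{bD'_\e}(z)^{k-s-\frac1m+\eta}\,\frac{\del_{bD'_\e}(\zeta)^{s-1}}{|z-\zeta|\,\Gm_\e(z,\zeta)^{k+1+\eta}},
\]
the row sums $\sup_z\int T_\e\,dV(\zeta)$ do follow from \rl{Lem::int_est} with $\all=s-1$, $\beta=k-1+\eta$, since $\all<\beta-\frac1m$ is exactly $k>s+\frac1m-\eta$. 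But the column sums $\sup_\zeta\int T_\e\,dV(z)$ require \rl{Lem::int_est} with $\all'=k-s-\frac1m+\eta$ and the same $\beta$, and $\all'<\beta-\frac1m$ is equivalent to $s>1$. For $\frac1p<s\le 1$ this condition is violated; the $z$-integral is then merely $O(1)$ rather than $O(\del_{bD'_\e}(\zeta)^{1-s})$, so $\sup_\zeta\int T_\e\,dV(z)\approx\del_{bD'_\e}(\zeta)^{s-1}$ is unbounded and the Young/Schur argument breaks. Since a large portion of the stated range ($1<p<\infty$, $s>\frac1p$) lies below $s=1$, this is a genuine gap.

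The paper fills this gap with a weighted H\"older splitting: it inserts an auxiliary parameter $\mu$ and writes $|\Kc f|^p\le\bigl(\int\del_{bD'_\e}(\zeta)^{-\mu p+\mu}|A|\,|f|^p\bigr)\bigl(\int\del_{bD'_\e}(\zeta)^{\mu}|A|\bigr)^{p/p'}$, which is exactly a Schur test with auxiliary weight $h(\zeta)=\del_{bD'_\e}(\zeta)^{\mu/p'}$ rather than $h\equiv1$. The two constraints \re{ka_range} and \re{ka_range_2} on $\mu$ are what shift both \rl{Lem::int_est} applications into their admissible ranges simultaneously, and their intersection is nonempty precisely when $s>\frac1p$ (not $s>1$). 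This choice of $\mu$ is the essential ingredient your proposal is missing; without it the argument only gives $s>1$.

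Two smaller remarks. Your part (ii) is fine as stated, since the $L^\infty$ bound only requires the one-sided $\zeta$-integral; the stricter $k>s+1+\frac1m-\eta$ in (ii) is simply the admissibility constraint coming from $\all=s-1$, $\beta=k-1+\eta$, not from any boundary-strip correction. And the $\del_{bD}$ versus $\del_{bD'_\e}$ reconciliation you flag as the main obstacle is not in fact the load-bearing difficulty—the paper simply works with $\del_{bD'_\e}$ throughout \rl{Lem::int_est} and invokes \rp{Prop::comm_est} at the very end—whereas the $s\le 1$ failure of the unweighted Schur test is the real one.
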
 
\begin{proof} 
(i) We estimate the integral 
\begin{equation} \label{main_int}  
	\int_{D'_\e} \blb \del_{b D'_\e}(z) \brb^{k - (s+ \frac{1}{m} - \eta) p} \left| \int_{\U \sm \ov D}  D_z^k K^{(\e)}(z, \cdot) \we [\dbar, \Ec] \var \right|^p \, dV(z). 
\end{equation}

Let $f$ be a coefficient function of $[\dbar, \Ec]\var$ so that $f \in H^{s-1,p}(\Uc \sm \ov{\Om})$. 
		
We now estimate the inner integral in \re{main_int} which we shall denote by $\Kc f$. By \re{K_exp}, we can  write $\Kc f(z)$ as a linear combination of 
\[
  \int_{\U \sm \ov D} f(\zeta) \blb D_z^\gm W^{(\e)}(\zeta,z) \brb D^{k-\gm}_{z} \left( \frac{\ov{\zeta_i} - \ov z_i }{|\zeta-z|^2} \right) \, dV(\zeta), \quad  0 \leq \gm \leq k. 
\]  
In view of \re{Prop::leray_map} (iii), and the fact that $|z-\zeta| \geq \Gm_\e(z,\zeta)$ for $(z,\zeta) \in D'_\e \times (\Uc \sm \ov D)$ (see \re{Gm_est} and \re{zeta-z_est}), the worst term is when $\gm = k$ and $D_z^\gm = D_{z_2}^k$ Thus  
\begin{equation} \label{Kf_main_term}  
\left| \Kc f(z) \right| \leq C \left| \int_{\U \sm \ov D} f(\zeta) \frac{D_{z_2}^k   W^{(\e)}(z,\zeta) }{|\zeta -z|} \, dV(\zeta) \right| . 
\end{equation}
To simplify notation we will denote the kernel of the above integral by $A(z,\zeta)$. 
By \re{Prop::h_del} (iii), we have 
\begin{equation} \label{B_est} 
  \left| A(z,\zeta) \right|
\leq \frac{C_{D,\eta}}{\blb \Gm_\e(z,\zeta)\brb^{k+1+\eta}|\zeta-z|}.   
\end{equation} 
Let $\mu>0$ be some number to be chosen. We have 
\begin{align*}
  |\Kc f(z)| &\leq C \int_{\U \sm \ov D} |A(z,\zeta)|^{\frac{1}{p}} |A(z,\zeta)|^{\frac{1}{p'}} | f(\zeta)| \, dV(\zeta) 
\\ &= C \int_{\U \sm \ov D} \blb \del_{bD'_\e}(\zeta) \brb^{-\mu} | A(z,\zeta) |^{\frac{1}{p}} \blb \del_{bD'_\e}(\zeta) \brb^{\mu}  |A(z,\zeta) | ^{\frac{1}{p'}} | f(\zeta) | \, dV(\zeta). 
\end{align*}
By H\"older's inequality we get 
\begin{equation} \label{Kf_Holder} 
  | \Kc f (z) |^p \leq C
\left[ \int_{\U \sm \ov D} \blb \del_{bD'_\e}(\zeta) \brb^{- \mu p +\mu} | A(z,\zeta) | 
| f(\zeta)|^p \, dV (\zeta) \right] 
\left[ \int_{\U \sm \ov D } \blb \del_{bD'_\e}(\zeta)\brb^{\mu} | A(z,\zeta)| \, dV(\zeta)  \right]^{\frac{p}{p'}}.     
\end{equation} 
By \re{B_est}, 
\[
  \int_{\U \sm \ov D } \blb \del_{bD'_\e}(\zeta)\brb^{\mu} | A(z,\zeta)| \, dV(\zeta)   
\leq C_{D,\eta}  \int_{\U \sm \ov D } 
\frac{\blb \del_{bD'_\e}(\zeta) \brb^{\mu}}{\blb \Gm_\e(z,\zeta)\brb^{k+1+\eta}|\zeta-z|}  \, dV(\zeta). 
\]
To estimate the integral we apply \rl{Lem::int_est} with $\all = \mu$ and $\beta = k-1+\eta$. 
\begin{equation} \label{B_zeta_int_est}   
   \int_{\U \sm \ov D } \blb \del_{bD'_\e}(\zeta)\brb^{\mu} | A(z,\zeta)| \, dV(\zeta) 
  \leq C_{D,\eta} \blb \del_{bD'_\e}(z) \brb^{\mu-(k-1+\eta)+\frac{1}{m} }. 
\end{equation}
The hypothesis of \rl{Lem::int_est} requires that
we choose 
\begin{equation} \label{ka_range}  
-1 < \mu < k+\eta -1- \frac{1}{m}.  
\end{equation} 
Using \re{B_zeta_int_est} in \re{Kf_Holder} and applying Fubini's theorem, we get 
\begin{equation} \label{Kf_last_int}  
 \begin{aligned} 
   &\int_{D'_\e} \blb \del_{bD'_\e}(z) \brb^{(k-s-\frac{1}{m}+\eta)p} 
|\Kc f(z)|^p \, dV(z) 
\\ &\quad \leq C_{D,\eta} \int_{D'_\e} \blb \del_{bD'_\e}(z) \brb^\si  
\left( \int_{\U \sm \ov D } \blb \del_{bD'_\e}(\zeta) \brb^{- \mu p +\mu} | A(z,\zeta) | 
| f(\zeta)|^p \, dV (\zeta)\right)  \, dV(z) 
\\ &\qquad = C_{D,\eta} \int_{\U \sm \ov D }  \blb \del_{bD'_\e}(\zeta) \brb^{-(p-1)\mu} \left( \int_{D'_\e} \blb \del_{bD'_\e}(z) \brb^\si |A(z,\zeta)|\,dV(z) \right) | f(\zeta)|^p \, dV (\zeta), 
\end{aligned}    
\end{equation}

where we set 
\begin{equation} \label{tau}  
\begin{aligned}
   \si &= \Bl k-s-\frac{1}{m} + \eta \Br p+ \frac{p}{p'} 
  \Bl \mu - \eta - k + 1 + \frac{1}{m} \Br
\\ &= (p-1) \mu + (1-s)p + \Bl  k + \eta - 1 - \frac{1}{m} \Br. 
\end{aligned}
\end{equation} 
By \re{B_est},  
\begin{equation} \label{B_z_int_est} 
  \int_{D'_\e} \blb \del_{bD'_\e}(z)\brb^{\si} 
 |A(z,\zeta)| \, dV(z)  
 \leq C_{D,\eta} \int_{D'_\e} 
\frac{\blb \del_{bD'_\e}(z) \brb^{\si}}{\blb \Gm_\e(z,\zeta)\brb^{k+1+\eta}|\zeta-z|}  \, dV(z).     
\end{equation}

We would like to apply $\all = \si$ and $\beta = k-1+\eta$. For this we need to choose $-1 < \si < k+\eta - 1 - \frac{1}{m}$. In view of \re{tau}, $\mu$ needs to satisfy 
\begin{equation} \label{ka_range_2} 
 \frac{1}{p-1} \Blb p \bl s -1 \, \br - \eta - k + \frac{1}{m} \Brb < \mu < \frac{p}{p-1} (s -1).   
\end{equation}
We need to check that the intersection of \re{ka_range} and \re{ka_range_2} is non-empty. There are two inequalities to check: 
\begin{gather*}
 \frac{p}{p-1} (s -1) > - 1; 
 \\ 
 \frac{1}{p-1} \Blb p \bl s -1 \, \br - \eta - k + \frac{1}{m} \Brb < k + \eta - 1 - \frac1m.  
\end{gather*}
An easy computation shows that the first inequality gives $s > \frac{1}{p}$, while the second inequality reduces to $s<k+\eta+ \frac{1}{p}-\frac{1}{m}$. By letting $k$ be arbitrarily large integers, we see that the admissible range of $s$ is $\bl \frac{1}{p}, \infty \br$.  
Assuming these conditions hold, we can apply \rl{Lem::int_est} to \re{B_z_int_est} and get 
\begin{align*}
  \int_{D'_\e} \blb \del_{bD'_\e}(z)\brb^{\si} 
 |A(z,\zeta)| \, dV(\zeta) 
 \leq C_{D,\eta} \blb \del_{bD'_\e}(\zeta)  \brb^{\si - (k-1+\eta) + \frac{1}{m}}. 
\end{align*}
Substituting the above estimate into \re{Kf_last_int} we obtain 
\begin{align*}
   \int_{D'_\e} \blb \del_{bD'_\e}(z) \brb^{(k-s-\frac{1}{m} + \eta)p} |\Kc f(z)|^p \, dV(z)  
&\leq C_{D,\eta}
\int_{\Uc \sm \ov D } \blb \del_{bD'_\e}(\zeta) \brb^{ -(p-1) \mu + \si - (k-1+\eta) + \frac{1}{m}} | f(\zeta)|^p \, dV(\zeta) 
\\
&= C_{D,\eta}
\int_{\Uc \sm \ov D} \blb \del_{bD'_\e}(\zeta) \brb^{ (1-s) p} | f(\zeta)|^p \, dV(\zeta). 
\end{align*}
Since $s>0$, by \rp{Prop::comm_est} the last expression is bounded by
\begin{align*}
& C_{D,\eta} \int_{\Uc \sm \ov D } \blb \del_{bD'_\e}(\zeta) \brb^{ (1-s)p} \left| [\db,\Ec] \var(\zeta) \right|^p \, dV(\zeta)
\\ &\quad \leq  C_{D,\eta} \int_{\Uc \sm \ov {D'_\e} } \blb \del_{bD'_\e}(\zeta) \brb^{ (1-s)p} \left| [\db,\Ec] \var(\zeta) \right|^p \, dV(\zeta)
  \leq C_{D,\eta, s}  |\var|_{H^{s,p}(D'_\e)}     
\end{align*}
\\[10pt]  
(ii) We follow the same notation as in (i). Let $\Kc$ be the inner integral in \re{main_int}. By \re{Kf_main_term} and \re{B_est}, we get 
\[
  |\Kc f(z)| \leq C_{D,\eta} \int_{\Uc \sm \ov D } \frac{ | f(\zeta)|}{\blb \Gm_\e(z,\zeta)\brb^{k+1+\eta}|\zeta-z|} 
 \, dV(\zeta), 
\]
where $f$ is a cofficient function of $[\db,\Ec]\var$. 
Now by \rp{Prop::comm_est} applied to the domain $D'_\e$, we have
\[
\bn \blb \del_{bD'_\e}(\zeta) \brb^{1-s} f \bn_{L^\infty(\Uc \sm \ov D)} \leq 
  \bn \blb \del_{bD'_\e}(\zeta) \brb^{1-s} f \bn_{L^\infty(\Uc \sm \ov{D'_\e})} \leq C_s |\var|_{\La^s(D'_\e)}, \quad s>0. 
\]
Hence the above integral is bounded by 
\begin{align*}
   |\Kc f(z)| &\leq C_{D,\eta,s} |\var|_{\La^s(D'_\e)} \int_{\Uc \sm \ov D} \frac{\blb \del_{bD'_\e}(\zeta) \brb^{s-1}}{\blb \Gm_\e(z,\zeta)\brb^{k+1+\eta}|\zeta-z|} \, dV(\zeta). 
\end{align*} 
Let $\all = s-1$ and $\beta= k-1+\eta$. The condition $-1 < \all < \beta - \frac{1}{m}$ in \rl{Lem::int_est} becomes 
\[ 
0 <s < k + \eta - 1 - \frac{1}{m}.
\]
Assuming the above line holds, we can apply \rl{Lem::int_est} to get 
\[
  |\Kc f (z)| \leq C_{D,\eta,s} |\var|_{\La^s(D'_\e)} 
\blb \del_{bD'_\e}(z) \brb^{s-k-\eta+\frac{1}{m}}, \quad z \in D'_\e  .  
\]
In other words, $\sup_{z \in \ov{D'_\e}} \blb \del_{bD'_\e}(z) \brb^{k- \bl s+\frac{1}{m}-\eta \br} \left| D^k \Hc^1 \var(z) \right| \leq C_{D,\eta, s}  |\var|_{\La^s(D'_\e)} $.     
\end{proof} 
We now combine \rp{Prop::H0_est} and \rp{Prop::H1_est} and \rl{Prop::H-L} to obtain the following estimates for the homotopy operator on $D'_\e$. 
\begin{thm} \label{Thm::EstHq}
Fix $\eta>0$. Let $D \subset\C^2$ be a smoothly bounded pseudoconvex domain of finite type $m$, with the defining function $r$. Let $\Hc_1^{(\e)} \var = \Hc^0_1 \var + \Hc^1_1 \var$, $\Hc_2^{(\e)} \db \var = \Hc_2^0 \db \var$ where $\Hc^0_1, \Hc^1_1, \Hc_2^0$ are given by \eqref{H0H1} and \eqref{H_2_db_var}, and $\Hc_1^{(\e)}$ depends on $\eta$. Then $\Hc_1^{(\e)}$ and $\Hc_2^{(\e)}$ are bounded on the following spaces:
\begin{enumerate}[(i)] 
    \item 
    $\Hc_1^{(\e)}: H_{(0,1)}^{s,p} (D)\to H^{s+\frac{1}{m}-\eta,p}(D'_\e)$, for any $1<p<\infty$ and $s>\frac1p$. 
    \item 
   $\Hc_1^{(\e)}: \La^s_{(0,1)}(D) \to \La^{s+\frac{1}{m}-\eta}(D'_\e)$ for any $s>0$. 
    \item 
    $\Hc_2^{(\e)}: H_{(0,2)}^{s,p} (D)\to H^{s+1,p}_{(0,1)}(D'_\e)$, for any $1<p<\infty$ and $s > \frac{1}{p}$.   
    \item 
   $\Hc_2^{(\e)}: \La^s_{(0,2)}(D) \to \La_{(0,1)}^{s+1}(D'_\e)$ for any $s>0$. 
\end{enumerate}
More specifically, there exist constants $C, C', C'', C'''$ depending on $D,\eta,s$ and independent of $\e$ such that
\begin{gather} \label{H1_e_var_est}   
  |\Hc_1^{(\e)} \var|_{H^{s+\frac{1}{m}-\eta,p}(D'_\e)} \leq C |\var|_{H^{s,p}(D)}, \quad 
  |\Hc_1^{(\e)} \var|_{\La^{s+\frac{1}{m}-\eta}(D'_\e)} \leq C' |\var|_{\La^s(D)}; 
  \\ \label{H2_e_var_est} 
 |\Hc_2^{(\e)} \db \var|_{H^{s+1,p}(D'_\e)} \leq C'' |\db \var|_{H^{s,p}(D)}, \quad 
  |\Hc_2^{(\e)} \db \var|_{\La^{s+1}(D'_\e)} \leq C''' |\db \var|_{\La^s(D)}.  
\end{gather}
\end{thm}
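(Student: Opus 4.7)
The plan is to assemble the theorem from the three preceding ingredients: \rp{Prop::H0_est} for the ``smooth'' pieces $\Hc_1^0, \Hc_2^0$, \rp{Prop::H1_est} for the top-order weighted $L^p/L^\infty$ bounds on the commutator piece $\Hc_1^1$, and the Hardy--Littlewood-type lemma \rp{Prop::H-L} which converts weighted derivative bounds into Sobolev and H\"older--Zygmund norms. Throughout we exploit the fact that the constants in \rp{Prop::H0_est} and \rp{Prop::H1_est} depend only on $D,\eta,s$, so independence from $\e$ is automatic.

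Parts (iii) and (iv) are immediate: since $\Hc_2^{(\e)}\db\var = \Hc_2^0 \db\var$ by \re{H_2_db_var}, \re{H20_var_est} in \rp{Prop::H0_est} delivers exactly the claim, in fact with the stronger gain of one full derivative. For parts (i) and (ii), split $\Hc_1^{(\e)} = \Hc_1^0 + \Hc_1^1$. By \re{H10_var_est}, $\Hc_1^0$ gains one full derivative, so the trivial continuous inclusions $H^{s+1,p}(D'_\e) \hra H^{s+\frac1m-\eta,p}(D'_\e)$ and $\La^{s+1}(D'_\e) \hra \La^{s+\frac1m-\eta}(D'_\e)$ (valid for $\eta$ small and $m\ge 1$) take care of this term.

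The core of the argument is the estimate for $\Hc_1^1$. Set $t = s + \tfrac1m - \eta$ and pick an integer $k$ large enough that $k > t$. \rp{Prop::H-L}(i) reduces $\|\Hc_1^1\var\|_{H^{t,p}(D'_\e)}$ to controlling the weighted sum $\sum_{|\beta|\le k}\|\del_{bD'_\e}^{k-t} D^\beta \Hc_1^1 \var\|_{L^p(D'_\e)}$. The top-order term $|\beta|=k$ is given directly by \rp{Prop::H1_est}(i). For a lower order $|\beta| < k$, I would handle two cases: when $|\beta| > t$, \rp{Prop::H1_est}(i) applied with $k$ replaced by $|\beta|$ bounds $\|\del_{bD'_\e}^{|\beta|-t} D^\beta \Hc_1^1\var\|_{L^p}$, and then boundedness of $\del_{bD'_\e}^{k-|\beta|}$ absorbs the discrepancy; when $|\beta| \le t$, the same kernel estimates of \rp{Prop::leray_map}(iii) used in proving \rp{Prop::H1_est} give an even more favorable integral (the derivative count is smaller), so one reruns the argument of \rp{Prop::H1_est} with $k$ held fixed at a value $>t$ but with fewer $z$-derivatives hitting the kernel, which only improves the singularity and still passes the $\all < \beta - \frac1m$ hypothesis in \rl{Lem::int_est}. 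The H\"older case (ii) is identical in structure, using \rp{Prop::H1_est}(ii) and \rp{Prop::H-L}(ii) in place of their $L^p$ counterparts.

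The main obstacle is bookkeeping the lower-order terms in \rp{Prop::H-L}: one must verify that the choice of $\mu$ in \re{ka_range} and \re{ka_range_2} in the proof of \rp{Prop::H1_est} remains admissible when the number of $z$-derivatives on the kernel is less than $k$, so that \rl{Lem::int_est} still applies. Since lowering the derivative count only weakens the singularity of $D^\all_z w_i^{(\e)}$ in \rp{Prop::leray_map}(iii), the range of admissible $\mu$ only grows, and the argument goes through. Summing over $|\beta|\le k$ and invoking \rp{Prop::H-L} yields \re{H1_e_var_est}, and the analogous computation yields the H\"older estimate. The bounds \re{H2_e_var_est} for $\Hc_2^{(\e)}$ have already been reduced to \rp{Prop::H0_est}, completing the proof.
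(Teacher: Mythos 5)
Your proposal is correct and follows essentially the same route as the paper: parts (iii)--(iv) come directly from \re{H20_var_est}, while for $\Hc_1^{(\e)}=\Hc_1^0+\Hc_1^1$ one combines \re{H10_var_est} (plus the trivial embedding losing $1-\frac1m+\eta$ derivatives) with the Hardy--Littlewood reduction \rp{Prop::H-L} and the weighted bounds \re{wt_Lp_est}, \re{wt_Linfty_est}. Your extra bookkeeping for the lower-order terms $|\beta|<k$ in \rp{Prop::H-L} (reapplying \rp{Prop::H1_est} at lower derivative order, or noting the kernel only becomes less singular) is a detail the paper leaves implicit, and it is handled correctly.
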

\begin{proof} 
The first statement in \re{H1_e_var_est} follows from \re{H10_var_est}, \re{H-L_Sob_est}, and \re{wt_Lp_est}; the second statement follows from  \re{H10_var_est}, \re{H-L_HZ_est}, and \re{wt_Linfty_est}. Estimates  \re{H2_e_var_est} follows from \re{H20_var_est}. 
\end{proof}
\begin{cor}[Homotopy formula] \label{Cor::hf} 
		Under the assumptions of \rt{Thm::EstHq}, suppose
  either 
  \\[5pt] 
  a) $\var\in H^{s,p}_{(0,1)}(D)$ and $\dbar\var\in H^{s,p}_{(0,2)}(D)$, for $1<p<\infty$ and $s>\frac1p$; or   
  \\ b) $\var\in \La^s_{(0,1)}(D)$ and $\dbar \var\in \La^{s}_{(0,2)}(D)$, for $s>0$. 
  \\[5pt] 
  Then for all sufficiently small $\e>0$, the following homotopy formula holds in the sense of distributions: 
	\begin{equation}\label{hf_eqn}
	\varphi=\dbar \Hc_1^{(\e)} \varphi+ \Hc_2 ^{(\e)}\dbar\varphi,
  \quad \text{on $D'_\e$}. 
		\end{equation}
	In particular if $\varphi \in H^{s,p}_{(0,1)}(D)$ (resp. $\var \in \La^s_{(0,1)}(D)$) is $\dbar$-closed, then we have $\dbar \Hc_1^{(\e)} \var = \var$ on $D'_\e$ and $\Hc_1^{(\e)} \var \in H^{s+\frac1m-\eta,p}(D'_\e)$ (resp.  
 $\Hc_1^{(\e)} \var \in \La^{s+\frac{1}{m}-\eta}(D'_\e)$), with $\Hc_1^{(\e)}$ satisfying estimates \eqref{H1_e_var_est}.  
	\end{cor}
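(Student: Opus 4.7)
The identity for smooth $\varphi \in C^1(\ov D)$ is the classical Bochner-Martinelli-Koppelman homotopy formula, in the extension-operator form of \cite{Gong19, S-Y25}. Starting from the Bochner-Martinelli representation $\Ec \varphi = \dbar_z \Hc_1^0 \varphi + \Hc_2^0 \dbar(\Ec \varphi)$ on $\Uc$, valid pointwise once $\Ec\varphi$ is arranged to have compact support in $\Uc$, I would restrict $z$ to $D'_\e \subset D$ (where $\Ec \varphi = \varphi$) and decompose $\dbar(\Ec\varphi) = \Ec(\dbar\varphi) + [\dbar, \Ec]\varphi$. The remaining task is to identify $\int_{\Uc \sm \ov D} B_{(0,1)}(z,\cdot) \we [\dbar, \Ec]\varphi$ with $\dbar_z \Hc_1^1 \varphi(z)$: this is the Koppelman transition identity $B - K^{(\e)} = d_{z,\zeta} L$ on $\Uc \sm \ov D$ combined with Stokes' theorem, where boundary contributions vanish because $\Ec\varphi$ is compactly supported in $\Uc$ and $[\dbar, \Ec]\varphi \equiv 0$ on $\ov D$. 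The fact that $w_i^{(\e)}(\zeta, \cdot)$ is holomorphic on $D'_\e$ (\rp{Prop::leray_map}(ii)) forces $K^{(\e)}$ to be of type $(0,0)$ in $z$, so $K^{(\e)}_{(0,1)}\equiv 0$ and the analogous term $\Hc_2^1 \dbar \varphi$ vanishes identically, matching \re{H_2_db_var}.

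For $\varphi$ in case (a), I would pick smooth $\varphi_n \in C^\infty(\ov D)$ with $\varphi_n \to \varphi$ in $H^{s,p}_{(0,1)}(D)$ and $\dbar \varphi_n \to \dbar \varphi$ in $H^{s,p}_{(0,2)}(D)$, obtained by a simultaneous extension of the pair $(\varphi, \dbar\varphi)$ to $\R^4$ via Rychkov followed by mollification, as in \cite{Gong19, S-Y25}. By \rt{Thm::EstHq}, the operators $\Hc_1^{(\e)}$ and $\Hc_2^{(\e)}$ are continuous on the ambient spaces, so the pointwise identity for $\varphi_n$ passes to the distributional limit on $D'_\e$: testing against $\chi \in C^\infty_c(D'_\e)$, the pairing $\langle \dbar \Hc_1^{(\e)} \varphi_n, \chi \rangle = -\langle \Hc_1^{(\e)} \varphi_n, \dbar \chi \rangle$ converges to $-\langle \Hc_1^{(\e)} \varphi, \dbar \chi \rangle$, and similarly for the $\Hc_2^{(\e)}$-term. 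For case (b), norm-density of $C^\infty(\ov D)$ fails in $\La^s$, so I would mollify the Rychkov extension to obtain approximants with uniform $\La^s$ bounds on both $\varphi_n$ and $\dbar \varphi_n$ together with strong $\La^{s'}$-convergence for any $0 < s' < s$; applying \rt{Thm::EstHq} at scale $s'$ lets the identity pass to the limit distributionally, while the uniform $\La^{s+\frac1m-\eta}$ bound on $\Hc_1^{(\e)} \varphi_n$ (and $\La^{s+1}$ on $\Hc_2^{(\e)} \dbar \varphi_n$) plus weak-$\ast$ compactness yields the stated membership.

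The main obstacle I expect is the simultaneous graph-norm approximation: since $\dbar$ is not elliptic, producing smooth $\varphi_n$ that approximate both $\varphi$ and $\dbar \varphi$ in the same fractional space is delicate and requires either a Whitney-type construction respecting the $\dbar$ structure, or an auxiliary $\dbar$ step on $\R^4 \sm D$ to reconcile independent Rychkov extensions of $\varphi$ and $\dbar \varphi$. In the H\"older-Zygmund case one must additionally work with weak-$\ast$ convergence because $C^\infty$ is not norm-dense in $\La^s$, so the final membership $\Hc_1^{(\e)}\varphi \in \La^{s+\frac1m-\eta}(D'_\e)$ is extracted from uniform norm bounds rather than from strong convergence.
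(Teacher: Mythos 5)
Your proposal follows essentially the same route as the paper: establish the identity for smooth forms, then pass to the limit via a graph-norm approximation and the continuity estimates of Theorem~\ref{Thm::EstHq}. The paper simply cites \cite[Prop.~2.1]{Gong19} for the smooth-form identity (the Bochner--Martinelli--Koppelman decomposition you sketch, including the observation that holomorphy of $w_i^{(\e)}$ in $z$ kills $K^{(\e)}_{(0,1)}$), and cites \cite[Prop.~A.3]{S-Y24_1} for exactly the simultaneous approximation you flag as the main technical obstacle; so both ``hard'' points in your sketch are handled in the paper by citation rather than from scratch, but the logical structure is the same.

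One place where you add something the paper leaves implicit: the paper dispatches the H\"older--Zygmund case with ``the proof\ldots is similar,'' whereas you correctly observe that $C^\infty(\ov D)$ is not norm-dense in $\La^s$ and so the strong-convergence argument from case (a) cannot be copied verbatim; your workaround (mollify the Rychkov extension to get uniform $\La^s$ bounds with strong $\La^{s'}$-convergence for $s'<s$, then apply Theorem~\ref{Thm::EstHq} at scale $s'$) is a sound way to obtain the distributional identity. Your final weak-$\ast$ compactness step is, however, superfluous for the membership claim: $\Hc_1^{(\e)}\var\in\La^{s+\frac1m-\eta}(D'_\e)$ already follows directly from Theorem~\ref{Thm::EstHq}(ii), which is a boundedness statement valid for all $\var\in\La^s_{(0,1)}(D)$ and not only for smooth ones; the approximation sequence is needed only to transfer the homotopy identity, not the membership.
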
  
	\begin{proof}
 By \cite[Prop.~2.1]{Gong19}, Formula \eqref{hf_eqn} is valid for $\varphi \in C^\infty_{(0,1)} (\ov{D'_\e})$.  
For general $\var \in H^{s,p}_{(0,1)}(D)$ such that $\dbar\varphi\in H^{s,p}_{(0,2)}(D)$ and with $s,p$ in the given range, we use approximation. By \cite[Prop.~A.3]{S-Y24_1}, there exists a sequence $\var_\la\in C^{\infty}(\ov D)$ such that 
		\begin{gather*}
		\var_\la \ra \var \quad \text{in $H^{s,p}(D)$ },  
		\\ 
		\dbar \var_\la \ra  \dbar \var \quad \text{in $H^{s,p}(D)$}. 
		\end{gather*} 
		By \rt{Thm::EstHq}, we have for $1<p<
        \infty$ and $s>\frac{1}{p}$,  
		\begin{align*} 
		\| \dbar  \Hc_1^{(\e)} (\var_\la - \var) \|_{H^{s+\frac{1}{m}-\eta-1, p} (D'_\e)} 
		&\leq \| \Hc_1^{(\e)} (\var_\la - \var) \|_{H^{s+\frac{1}{m}-\eta, p} (D'_\e)}  
		\\ &\leq C \| \var_\la -  \var \|_{H^{s,p}(D)}, 
		\end{align*}
		and also  
		\[
		\| \Hc_2^{(\e)} \dbar (\var_\la - \var) \|_{H^{s+1, p}(D'_\e)}  
		\leq C' \| \dbar (\var_\la- \var)  \|_{H^{s,p} (D)}.  
		\]
		Letting $\la \to \infty$ we obtain \re{hf_eqn}. 
        The proof for the H\"older space is similar and we leave it to the reader. 
	\end{proof} 

We can now use \rc{Cor::hf} to prove \rt{Thm::main}. 
\\[5pt]
\textit{Proof of \rt{Thm::main}}. 
Let $\{ \e_j \}$ be a sequence of small positive numbers tending to $0$. Consider the sequence of functions $\{ \Ec_j \Hc_1^{(\e_j)} \var \}$, and $ \{ \Ec_j \Hc_2^{(\e_j)} \db \var \}$, $\Ec_j$ being the Rychkov extension operator on $D'_{\e_j}$. Then by \rc{Cor::Rychkov_ext} and \rt{Thm::EstHq}, we have
\begin{gather*}
 |\Ec_j \Hc_1^{(\e_j)} \var|_{H^{s+\frac{1}{m}-\eta,p}(\C^n)} \leq C_{s,\eta} |\Hc_1^{(\e_j)} \var|_{H^{s+\frac{1}{m}-\eta,p}(D'_{\e_j})} \leq C_{D,\eta,s} |\var|_{H^{s,p}(D)}; 
 \\ 
  |\Ec_j \Hc_2^{(\e_j)} \db \var|_{H^{s+1,p}(\C^n)} \leq C_{s,\eta} |\Hc_2^{(\e_j)} \db \var|_{H^{s+1,p}(D'_{\e_j})} \leq C_{D,\eta,s} |\db \var|_{H^{s,p}(D)}. 
\end{gather*}
By the Banach-Alaouglu theorem, there exists a subsequence $\{ \Ec_{j_\all} \Hc_1^{(\e_{j_\all})} \var \}$ which converges weakly to some limit function $\Hc_1 \var$ in $H^{s+\frac{1}{m}-\eta,p}(\C^n)$. 
Similarly, by extracting another subsequence from $\{ \Ec_{j_\all} \Hc_2^{(\e_{j_\all})} \db \var \}$, and denoting this subsequence still by $\{ \Ec_j \Hc_2^{(\e_j)} \db \var \}$, we get $\Ec_j \Hc_2^{(\e_j)} \db \var$ converges weakly to a limit function $\Hc_2 \db \var$. 

Now, by \rc{Cor::hf}, we have 
$\var = \db \Hc_1^{(\e_j)} \var + \Hc_2^{(\e_j)} \db \var$ on $D'_{\e_j}$.     
Let $\phi = \phi_1 d \hht {\ov z_1} + \phi_2 d\hht { \ov z_2}$ be a $(2,1)$ form with coefficients in $C^\infty_c(D)$. Here $d \hht{\ov z_1}:= d \ov z_2 \we d z_1 \we d z_2 $ and 
$d \hht{\ov z_2}:= d \ov z_1 \we d z_1 \we d z_2 $. 
Then there exists a large $J$ such that $\supp \phi \subset D'_{\e_j}$, for all $j > J$. It follows that for $j > J$: 
\begin{align*} 
   \left< \var, \phi \right>_D 
  = \left< \var, \phi \right>_{D'_{\e_j}} 
&= \left< \db \Ec_j \Hc_1^{(\e_j)} \var + \Ec_j \Hc_2^{\e_j} \db \var, \phi \right>_{D'_{\e_j}}
= \left< \db \Ec_j \Hc_1^{(\e_j)} \var, \phi \right>_{D'_{\e_j}} + \left< \Ec_j \Hc_2^{\e_j} \db \var, \phi \right>_{D'_{\e_j}}
\\&= - \left< \Ec_j \Hc_1^{(\e_j)} \var, \db \phi \right>_{D'_{\e_j}} + \left< \Ec_j \Hc_2^{\e_j} \db \var, \phi \right>_{D'_{\e_j}}
= - \left< \Ec_j \Hc_1^{(\e_j)} \var, \db \phi \right>_{D} + \left< \Ec_j \Hc_2^{\e_j} \db \var, \phi \right>_{D}.    
\end{align*} 
Taking limit as $j \to \infty$, and using that 
\[ 
 \lim_{j\to \infty} - \left< \Ec_j \Hc_1^{(\e_j)} \var, \db \phi \right>_{D} = - \left< \Hc_1 \var, \db \phi \right>_D, \quad 
  \lim_{j\to \infty} \left< \Ec_j \Hc_2^{\e_j} \db \var, \phi \right>_{D} =  \left< \Hc_2 \db \var, \phi \right>_{D}  , 
\] 
We get
\[
  \left< \var, \phi \right>_D  
  = - \left< \Hc_1 \var, \db \phi \right>_{D} 
+ \left< \Hc_2 \db \var, \phi \right>_{D} 
  = \left< \db \Hc_1 \var + \Hc_2 \db \var, \phi \right>_{D}.  
\] 
In other words, $\var = \db \Hc_1 \var + \Hc_2 \db \var$ in the sense of distributions on $D$. 

Next we prove the statement for the H\"older-Zygmund space. Let $\e_j$ and $\Ec_j$ be as above. The sequence $\Ec_j \Hc^{(\e_j)} \var$ satisfies the estimate 
\begin{gather*}
   |\Ec_j \Hc_1^{(\e_j)} \var|_{\La^{s+\frac{1}{m}-\eta}(\C^n)} \leq C_s |\Hc_1^{(\e_j)} \var|_{\La^{s+\frac{1}{m}-\eta}(D'_{\e_j})} \leq C_{D,\eta,s} |\var|_{\La^s(D) }; 
   \\ 
|\Ec_j \Hc_2^{(\e_j)} \db \var|_{\La^{s+1}(\C^n)} \leq C_s |\Hc_2^{(\e_j)} \db \var|_{\La^{s+1}(D'_{\e_j)}} \leq C_{D,\eta,s} |\db \var|_{\La^s(D) }. 
\end{gather*}
In particular, $\{ \Ec_j \Hc_1^{(\e_j)} \var \}, 
\{ \Ec_j \Hc_2^{(\e_j)} \db \var\} $ are families of equicontinuous functions on $\ov D$. By the Ascoli-Arzela theorem and by taking two subsequence successively, we may assume that 
$\Ec_j \Hc_1^{(\e_j)} \var$ and $\Ec_j \Hc_2^{(\e_j)} \db \var$ converge uniformly to some limit functions $\Hc_1 \var, \Hc_2 \db \var$ respectively, which are in $\La^{s+\frac{1}{m}-\eta}(D)$. 
Furthermore, we have $\var = \db \Hc_1 \var + \Hc_2 \db \var$ in the sense of distributions on $D$. The proof is now complete. \qedhere 

\bibliographystyle{amsalpha}
\bibliography{Reference}  

\end{document}